\def\tilde{\widetilde}
\newcommand{\YY}{\mathbb{Y}}
\newcommand{\HH}{\mathbb{H}} 
\newcommand{\PP}{\mathbb{P}} 
\theoremstyle{localthm}
\theoremstyle{definition}
\newtheorem{Theorem}{Theorem}
\newtheorem{Corollary}[Theorem]{Corollary}
\newtheorem{Lemma}[Theorem]{Lemma}
\newtheorem{Remark}{Remark}
\newtheoremstyle{localrem}
	{5pt} 
	{5pt} 
	{\rm} 
	{} 
	{\bf} 
	{{\rm.}} 
	{.7em} 
	{} 
\theoremstyle{localrem}
\begin{document}
\title{Hardy's Inequality and Its Descendants}
\runtitle{Hardy's inequalities} 

\begin{abstract}
We formulate and prove a generalization of Hardy's inequality \cite{Hardy:1925} in terms
of random variables and show that it contains the usual (or familiar)
continuous and discrete forms of Hardy's inequality.
Next we improve the recent version by \cite{MR4061539} of Hardy's inequality
with weights for general Borel measures and mixed norms so that it implies the discrete
version of  \cite{MR3405817} and the Hardy inequality with weights of
 \cite{MR0311856}
as well as the mixed norm versions due to
 \cite{MR1574995},  
 \cite{MR1574997},  
and  \cite{MR523580}. 
An equivalent formulation in terms of random variables is given
as well.
We also formulate a reverse version of Hardy's inequality, the closely
related Copson inequality, a reverse Copson inequality and a Carleman-P\'olya-Knopp inequality via random variables.
Finally we connect our Copson inequality with counting process martingales and survival analysis, and briefly discuss other applications.
\end{abstract}

\begin{aug}
\author{\fnms{Chris A. J.} \snm{Klaassen}\thanksref{a}\ead[label=e1]{C.A.J.Klaassen@uva.nl}} 
\and
\author{\fnms{Jon A.} \snm{Wellner}\thanksref{b}\corref{e2}\ead[label=e2]{jaw@stat.washington.edu}}

\address[a]{Korteweg-de Vries Institute for Mathematics, University of Amsterdam, The Netherlands \printead{e1}}
\address[b]{Statistics, Box 354322, University of Washington, Seattle, WA 98195-4322  \printead{e2}}
\runauthor{Klaassen and Wellner}
\end{aug}


\maketitle


\begin{keyword}
\kwd{Reverse Hardy inequality}
\kwd{Copson's inequality} 
\kwd{Hardy-Littlewood-Bliss inequality}
\kwd{Muckenhaupt's inequality}
\kwd{P\'olya-Knopp inequality}
\kwd{Carleman's inequality}
\kwd{martingales}
\kwd{survival analysis}
\end{keyword}



\section{Introduction}\label{sec:I}

The classical Hardy inequality is often presented as the following  pair of
inequalities:
the {\sl continuous (or integral form) inequality} says,
if $p>1$ and $\psi$ is a nonnegative $p-$integrable function on $(0,\infty)$, then
\begin{eqnarray}
\int_0^\infty \left ( \frac{1}{x} \int_0^x \psi (y) dy \right )^p \, dx
\le  \left ( \frac{p}{p-1} \right )^p \int_0^{\infty} \psi^p(y)\, dy,
\label{ContHardyIneq}
\end{eqnarray}
while the {\sl discrete  (or series form) inequality} says,
if $p>1$ and  $\{ c_n \}_1^\infty$ is a sequence of nonnegative real numbers,
then
\begin{eqnarray}
\sum_{n=1}^\infty \left ( \frac{1}{n} \sum_{k=1}^n c_k \right )^p
\le \left ( \frac{p}{p-1} \right )^p  \sum_{k=1}^\infty c_k^p .
\label{DiscHardyIneq}
\end{eqnarray}
For example, see pp. 239--243 of \cite{MR0046395}, Exercises 3.14 and 3.15 of
\cite{MR0210528},   
 \cite{MR3676556},  
or
\cite{MR2062704}, Chapter 9.  

As \cite{Hardy:1925}  
mentions in his Section 5,
Landau pointed out that the discrete inequality follows from the integral one by noting that
$c_1 \geq c_2 \geq \cdots $ may be assumed, and by choosing an appropriate step function as
$\psi$; see also \cite{MR3676556}.

Our main objective here is to give a unified formulation and proof
of the inequalities (\ref{ContHardyIneq}) and (\ref{DiscHardyIneq})
using the notation and language of probability theory.
Along the way we will obtain a large family of other corollaries
related to weighted Hardy inequalities
(as given in \cite{MR2256532} and in the book-length treatments
\cite{MR3676556} and
\cite{MR2351524}); 
see Section \ref{sec:Mr}.

There is a vast literature on Hardy's inequality with weights with
\cite{MR0311856},  building on
\cite{MR280661} and  
\cite{MR0255751},  
as a milestone.
Versions of this inequality are useful in the study of
differential equations
(\cite{MR2508839}, 
\cite{MR3408787});  
the stability of stochastic processes
(\cite{MR3289377},   
\cite{MR1710983});   
functional inequalities, e.g.  Poincar\'e and log-Sobolev inequalities,
(\cite{MR1701522},  
\cite{MR1682772},   
\cite{MR2052235},  
\cite{MR2895086},   
and
\cite{MR3155209}). 

Such versions usually involve two arbitrary Borel measures.
A very recent result by 
  \cite{MR4061539} is not optimal yet, because it does
not contain the discrete version as given by
\cite{MR3405817}.
In Section \ref{Hiww} we shall formulate an improvement of the result by
\cite{MR4061539} that contains the discrete version by \cite{MR3405817}
as a special case. Actually our proof of this improvement is based of the discrete result of \cite{MR3405817}.
An equivalent formulation of our version of Hardy's inequality 
with weights in terms of random variables will also be given.

Furthermore, we apply our methods from Section \ref{sec:Mr} to Copson's inequality
(\cite{MR1574056}) in Section \ref{sec:Ci} and to the reverse Hardy
inequality in Section \ref{ArHi}; cf. \cite{MR854569} and \cite{MR858964}.
We treat reverse Copson inequalities in the same style in Section \ref{sec:ReverseCopsonIneq},
and we provide a probabilistic version 
of the inequalities of Carleman, P\'olya, and Knopp in Section \ref{sec:CarlemanPolyaKnopp}. 
In Section~\ref{sec:MartingalesAndHoperators} we connect our new versions of
Copson's inequality formulated in probability terms with counting process
martingales arising in survival analysis and reliability theory.
The appendix, Section~\ref{sec:Appendix}, elaborates on survival
analysis by briefly explaining connections with the forward (and backward) versions of the Kaplan - Meier
estimators appearing in right (and left) censored survival data, including a short description of the analysis of
data arising from the question of ``when do the baboons come down from the trees''.  
Other applications are presented briefly in Section \ref{ApplicRelWork} and a
summary of the new inequalities is given in Section \ref{sec:summary}.
Most of the proofs are collected in Section \ref{sec:proofs}.

\section{Hardy's inequality}\label{sec:Mr}
Here is our version of Hardy's inequality that implies both (\ref{ContHardyIneq}) and (\ref{DiscHardyIneq}).

\begin{Theorem}
{\bf Hardy's inequality}
\label{HardyIneq}\\
Let $X$ and $Y$ be independent random variables with
distribution function $F$ on $({\mathbb R}, {\cal B})$, and let $\psi $ be a nonnegative measurable function on $({\mathbb R}, {\cal B})$.
For $p>1$
\begin{equation}
\label{Hardy}
E\left( \left[ \frac{E\left( \psi(Y) {\bf 1}_{[Y \leq X]} \mid X \right)}{F(X)} \right]^p \right)
\leq \left(\frac p{p-1} \right)^p E\left( \psi^p(Y) \right) \qquad \quad
\end{equation}
holds.
For
continuous distribution functions $F$ this inequality may be rewritten as
\begin{equation}
\label{Hardyuniform}
\int_0^1 \left[ \frac 1u \int_0^u \psi_F(v) dv \right]^p du \leq \left(\frac p{p-1} \right)^p \int_0^1 \psi_F^p(v) dv
\end{equation}
with $\psi_F(v) =\psi(F^{-1}(v)),\, 0<v<1,$ and for such $F$ the constant $(p/(p-1))^p$ is the smallest possible one.
\end{Theorem}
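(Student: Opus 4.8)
The plan is to prove the single inequality (\ref{Hardy}) for an \emph{arbitrary} distribution function $F$ by a Lebesgue--Stieltjes version of Hardy's own telescoping argument, and then to read off (\ref{Hardyuniform}) and the sharpness of the constant as consequences for continuous $F$. Throughout I write $q=p/(p-1)$, so the target constant is $q^p$, and I set $G(x)=E(\psi(Y){\bf 1}_{[Y\le x]})=\int_{(-\infty,x]}\psi\,dF$ and $A(x)=G(x)/F(x)$; since $Y$ is independent of $X$, the left-hand side of (\ref{Hardy}) is exactly $E(A(X)^p)=\int_{\mathbb{R}}A^p\,dF$. Assuming the right-hand side finite (otherwise there is nothing to prove) and truncating $\psi$ and the range of integration so that the quantities below are finite, the whole inequality will follow from the single estimate $\int A^p\,dF\le q\int\psi A^{p-1}\,dF$ combined with H\"older's inequality on the measure $dF$ with exponents $p,q$: the latter gives $\int\psi A^{p-1}\,dF\le(\int\psi^p\,dF)^{1/p}(\int A^{p}\,dF)^{1/q}$ (using $(p-1)q=p$), whence $\int A^p\,dF\le q^p\int\psi^p\,dF=q^pE(\psi^p(Y))$.

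The heart of the argument, and the step I expect to be the main obstacle, is the inequality between the Stieltjes measures
\begin{equation*}
A^p\,dF-q\,\psi A^{p-1}\,dF\;\le\;-\tfrac{q}{p}\,d\!\left(FA^p\right),
\end{equation*}
since integrating it over $\mathbb{R}$ and using $FA^p=G^p/F^{p-1}\to0$ as $x\to-\infty$ together with $FA^p\ge0$ as $x\to\infty$ yields precisely $\int A^p\,dF\le q\int\psi A^{p-1}\,dF$. On the continuous part of $F$ one has $dG=\psi\,dF$, and the chain rule gives $d(FA^p)=d(G^pF^{1-p})=\left(pA^{p-1}\psi-(p-1)A^p\right)dF$, so the displayed relation holds there with equality. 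At an atom of $F$, writing $F_\pm,G_\pm,A_\pm$ for the one-sided values and using $\psi\,\Delta F=\Delta G=F_+A_+-F_-A_-$, the atom of the left-hand measure equals $A_+^p\,\Delta F-qA_+^{p-1}(F_+A_+-F_-A_-)$, and the required bound by the atom $-\tfrac{q}{p}(F_+A_+^p-F_-A_-^p)$ of the right-hand measure reduces, after collecting terms, to the elementary Young inequality $A_-A_+^{p-1}\le\tfrac1pA_-^p+\tfrac1qA_+^p$; this is exactly the weighted form of Hardy's classical telescoping proof of (\ref{DiscHardyIneq}). Thus the continuous chain rule and the discrete Young step fuse into one measure inequality, and this is what lets the single statement (\ref{Hardy}) subsume both (\ref{ContHardyIneq}) and (\ref{DiscHardyIneq}).

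For continuous $F$ the equivalence with (\ref{Hardyuniform}) is then just the probability integral transform: $U=F(X)$ and $V=F(Y)$ are independent and uniform on $(0,1)$, the events $[Y\le X]$ and $[V\le U]$ coincide almost surely, $F(X)=U$, and $\psi(Y)=\psi_F(V)$, so the inner conditional expectation becomes $\int_0^U\psi_F(v)\,dv$; substituting shows that (\ref{Hardy}) reads verbatim as (\ref{Hardyuniform}), values of $\psi$ off the support of $F$ being irrelevant as they carry no $dF$-mass. Conversely every measurable $\psi_F\ge0$ on $(0,1)$ arises from such a $\psi$, so (\ref{Hardyuniform}) and the continuous case of (\ref{Hardy}) are one and the same assertion.

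Finally, sharpness for continuous $F$ is a one-parameter test-function computation in the form (\ref{Hardyuniform}): taking $\psi_F(v)=v^{-1/p+\eps}$ for small $\eps>0$ gives $\int_0^1\psi_F^p=1/(p\eps)$ and inner average $u^{-1/p+\eps}/(1-1/p+\eps)$, so the ratio of the two sides equals $(1-1/p+\eps)^{-p}\to(p/(p-1))^p$ as $\eps\downarrow0$; pulling back by $\psi=\psi_F\circ F$ exhibits near-extremal functions for any continuous $F$ and shows no smaller constant can work. The only points needing care are the routine integrability and truncation justifications in the first paragraph and the vanishing of the boundary term $FA^p$ at $-\infty$, both handled by monotone approximation.
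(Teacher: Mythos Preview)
Your argument is correct and rests on the same Elliott--Broadbent skeleton (Young's inequality to telescope, then H\"older) that the paper uses, but you organize it quite differently. The paper first proves a purely discrete weighted lemma
\[
\sum_{n=1}^m \left( \frac {\sum_{i=1}^n a_i p_i}{\sum_{i=1}^n p_i} \right)^p p_n
\leq \left( \frac p{p-1} \right)^p \sum_{n=1}^m a_n^p p_n
\]
by exactly the Young-plus-telescoping computation you do at atoms, then \emph{discretizes} an arbitrary $F$ via quantile points $y_{N,i}=F^{-1}(i/N)$, applies the lemma with $p_n=F(y_{N,n})-F(y_{N,n-1})$ and $a_n p_n=\int_{(y_{N,n-1},y_{N,n}]}\psi\,dF$, and passes to the limit with Fatou. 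You instead work directly with the Lebesgue--Stieltjes measure $d(FA^p)$, splitting it into its continuous and atomic parts: the chain rule handles the former with equality, and Young's inequality $A_-A_+^{p-1}\le\tfrac1pA_-^p+\tfrac{p-1}{p}A_+^p$ handles each atom --- this atom step is literally the paper's inequality (NB3) in disguise.

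What each buys: your route is more transparent about \emph{why} one statement covers both the integral and series forms of Hardy (the measure inequality is agnostic to whether $dF$ is diffuse or atomic), and it avoids the discretization bookkeeping and the Fatou step. The price is that the identity $d(G^pF^{1-p})_c=(pA^{p-1}\psi-(p-1)A^p)\,dF_c$ on the continuous part requires the chain rule for (possibly singular-)continuous BV functions, which is standard but not entirely elementary; when the atoms of $F$ are dense there are no ``intervals between atoms'' on which to apply the ordinary calculus chain rule, so one really needs the BV version. The paper's reduction to finite sums sidesteps this by needing only Jensen, Fatou, and right-continuity of $F$. Your truncation remarks and the vanishing of $FA^p$ at $-\infty$ (via H\"older: $G(x)^p\le F(x)^{p-1}\int_{(-\infty,x]}\psi^p\,dF$) are fine, and your sharpness computation with $\psi_F(v)=v^{-1/p+\eps}$ matches the standard one.
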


The strength of this inequality (\ref{Hardy}) lies in the fact that it implies both the continuous
and the discrete version of Hardy's inequality.

\begin{Corollary}\label{ClassicalCont-Disc-Ineq} \hfill \newline
(i) \ For any $p>1$ and nonnegative $\psi \in L_p$,  inequality (\ref{ContHardyIneq}) holds.   \\
(ii) \ For any $p>1$ and nonnegative sequence $\{ c_n \}_{n=1}^\infty \in \ell_p$,
inequality (\ref{DiscHardyIneq}) holds.
\end{Corollary}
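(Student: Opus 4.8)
The plan is to derive both classical inequalities from the single master inequality (\ref{Hardy}) by specializing the common law $F$ of $X$ and $Y$ to a uniform distribution and then removing a cut-off by monotone convergence; no new analytic input is needed. In both parts the choice of the uniform law is engineered so that the normalizing factor in front of the expectations cancels on the two sides, leaving exactly the target inequality on a truncated domain.

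For part (i), fix $a>0$ and let $X$ and $Y$ be uniform on $(0,a)$, so that $F(x)=x/a$ and $E(\psi(Y){\bf 1}_{[Y\le X]}\mid X=x)=a^{-1}\int_0^x\psi(y)\,dy$ for $0<x<a$. Hence
\begin{equation*}
\frac{E(\psi(Y){\bf 1}_{[Y\le X]}\mid X)}{F(X)}=\frac1X\int_0^X\psi(y)\,dy ,
\end{equation*}
and inserting this into (\ref{Hardy}) the common factor $a^{-1}$ cancels, leaving
\begin{equation*}
\int_0^a\left(\frac1x\int_0^x\psi(y)\,dy\right)^p dx\le\left(\frac p{p-1}\right)^p\int_0^a\psi^p(y)\,dy .
\end{equation*}
Both sides increase with $a$ and the right-hand side is dominated by $(p/(p-1))^p\int_0^\infty\psi^p$, so letting $a\to\infty$ and applying monotone convergence yields (\ref{ContHardyIneq}). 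Equivalently, one may read off the inequality on $(0,1)$ from the uniform form (\ref{Hardyuniform}) with $F$ the Uniform$(0,1)$ law and then rescale, but the direct choice above makes the cancellation transparent.

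For part (ii), the same recipe works with a discrete choice of $F$: fix $N$ and let $X$ and $Y$ be uniform on $\{1,\dots,N\}$, so that $F(n)=n/N$ and, writing $c_k=\psi(k)$,
\begin{equation*}
\frac{E(\psi(Y){\bf 1}_{[Y\le X]}\mid X=n)}{F(n)}=\frac{N^{-1}\sum_{k=1}^n c_k}{n/N}=\frac1n\sum_{k=1}^n c_k .
\end{equation*}
Substituting into (\ref{Hardy}) the factor $N^{-1}$ again cancels, producing the truncated discrete inequality $\sum_{n=1}^N(\frac1n\sum_{k=1}^n c_k)^p\le(p/(p-1))^p\sum_{k=1}^N c_k^p$, and letting $N\to\infty$ gives (\ref{DiscHardyIneq}). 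It is precisely here that the probabilistic master inequality shows its strength: because (\ref{Hardy}) holds for \emph{every} distribution function $F$, including discrete ones, the series form is obtained directly from a discrete uniform law, rather than via Landau's step-function approximation from the integral form.

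The argument is essentially bookkeeping, and the genuine content lies entirely in the choice of the uniform laws that forces the normalizing constants to cancel. The only points needing a little care are the elementary limiting steps: the passage $a\to\infty$ (resp. $N\to\infty$) is handled unconditionally by monotone convergence, the case of an infinite right-hand side being covered automatically since the inequality is then trivial.
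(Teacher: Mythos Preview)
Your proof is correct and follows exactly the paper's approach: specialize $F$ to the uniform law on $[0,K]$ (respectively $\{1,\ldots,K\}$), multiply through by $K$ to cancel the normalizing factor, and let $K\to\infty$. Your write-up simply spells out in detail what the paper states in one sentence.
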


\begin{proof}
(i) and (ii) follow from Theorem~\ref{HardyIneq} by taking $F$ to be the distribution function corresponding to the
uniform probability measure on $[0,K]$ and on $\{ 1, \ldots , K \}$, respectively, multiplying by $K$, and taking limits as $K \rightarrow \infty$.
\end{proof}

Translating Theorem \ref{HardyIneq} from random variable notation back into analysis yields the following corollary.
\begin{Corollary}
\label{ThmAnalysisForm}
For any $p>1$, distribution function $F$ on ${\mathbb R}$, and $\psi \in L_p (F)$ we have
\begin{eqnarray*}
\int_{{\mathbb R}} | H_F \psi (x) |^p dF(x) \le \left(\frac p{p-1} \right)^p \int_{{\mathbb R}} | \psi (y) |^p dF(y)
\end{eqnarray*}
where $H_F $ is the $F-$averaging operator defined for $x \in {\mathbb R}$ and $\psi \in L_p (F)$ by
\begin{eqnarray}
H_F \psi (x) \equiv \frac{\int_{(-\infty , x]} \psi(y) d F(y) }{F(x)} = E\left( \psi(Y) \mid Y \leq x \right).
\label{HardyOperatorLeftTail}
\end{eqnarray}
\end{Corollary}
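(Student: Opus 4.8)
The plan is to read the corollary off directly from Theorem \ref{HardyIneq} after installing a short dictionary between the probabilistic and the analytic notation, the only genuinely new ingredient being the reduction from general $\psi \in L_p(F)$ to the nonnegative case already covered by the theorem. First I would apply Theorem \ref{HardyIneq} to the nonnegative measurable function $|\psi|$, which lies in $L_p(F)$ since $\psi$ does. The key identification is that, because $X$ and $Y$ are independent with common distribution $F$, the random variable inside the $p$-th power on the left of (\ref{Hardy}) is exactly $H_F|\psi|(X)$: conditioning on $\{X=x\}$ and using independence,
$$E\big(|\psi(Y)|\,\mathbf{1}_{[Y \le X]} \,\big|\, X = x\big) = \int_{(-\infty,x]} |\psi(y)|\,dF(y),$$
so that dividing by $F(x)$ recovers the $F$-averaging operator (\ref{HardyOperatorLeftTail}) applied to $|\psi|$. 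On the set $\{x : F(x)=0\}$ the quotient is $0/0$ and is set to zero by convention; this set carries no $F$-mass and therefore does not affect any integral below.

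Second, since $X \sim F$ and $Y \sim F$, the outer expectation on the left of (\ref{Hardy}) and the expectation on its right are simply $F$-integrals,
$$E\big([H_F|\psi|(X)]^p\big) = \int_{\mathbb{R}} [H_F|\psi|(x)]^p \, dF(x), \qquad E\big(|\psi(Y)|^p\big) = \int_{\mathbb{R}} |\psi(y)|^p \, dF(y).$$
Substituting these into the bound of Theorem \ref{HardyIneq} gives the asserted inequality, except with $H_F|\psi|$ rather than $|H_F\psi|$ in the left-hand integrand.

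Finally I would remove this discrepancy using the monotonicity of the positive linear averaging operator: for every $x$ with $F(x)>0$,
$$|H_F\psi(x)| = \left| \frac{\int_{(-\infty,x]}\psi(y)\,dF(y)}{F(x)} \right| \le \frac{\int_{(-\infty,x]}|\psi(y)|\,dF(y)}{F(x)} = H_F|\psi|(x),$$
so that $|H_F\psi(x)|^p \le [H_F|\psi|(x)]^p$ for $F$-almost every $x$. Integrating this pointwise inequality against $dF$ and chaining it with the bound from the previous step yields the claim. I do not expect any real obstacle: the analytic content is entirely contained in Theorem \ref{HardyIneq}, and the only points requiring care are the routine identification of the inner conditional expectation with an $F$-integral under independence and the elementary estimate $|H_F\psi| \le H_F|\psi|$, which reduces everything to the nonnegative setting.
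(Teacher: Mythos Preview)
Your proposal is correct and is essentially identical to the paper's own argument: the paper simply notes that $|H_F\psi|\le H_F|\psi|$ (with equality for nonnegative $\psi$) and observes that this makes Corollary~\ref{ThmAnalysisForm} equivalent to Theorem~\ref{HardyIneq}. Your write-up just spells out the translation between the probabilistic and analytic notation a bit more explicitly.
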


Note that $H_F$ generalizes both the discrete and the continuous Hardy averaging operators;
see e.g. \cite{MR2256532}, page 715.
Observe that $|H_F \psi| \leq H_F|\psi|$ holds for all measurable $\psi$ with equality if $\psi$ is nonnegative $F$-a.e.
This shows the equivalence of Theorem \ref{HardyIneq} and Corollary \ref{ThmAnalysisForm}.

\begin{Remark}
If we replace $({\bf 1}_{[Y \leq X]},F(X))$ in (\ref{Hardy}) by $({\bf 1}_{[Y < X]},F(X-))$
with the convention $0/0=0$, then the inequality does not hold anymore for some distribution functions with jumps.
In particular, for $X$ and $Y$ Bernoulli with success probability $P(X=1)=q$ and with $\psi(0)=1,\ \psi(1)=0$ we get
\begin{equation}\label{b}
E\left( \left[ \frac{E\left( \psi(Y) {\bf 1}_{[Y<X]} \mid X \right)}{F(X-)} \right]^p \right) = q
\end{equation}
and
\begin{equation}\label{c}
\left(\frac p{p-1} \right)^p E\left( \psi^p(Y) \right) = \left(\frac p{p-1} \right)^p (1-q).
\end{equation}
Consequently, inequality (\ref{Hardy}) with $({\bf 1}_{[Y \leq X]},F(X))$
replaced by $({\bf 1}_{[Y < X]},F(X-))$ does not hold here for
\begin{equation}\label{d}
\frac 1{1+ \left(1- \frac 1p \right)^p} < q < 1.
\end{equation}
\end{Remark}

\begin{Remark}\label{suboptimalconstant}
There are distributions for which the constant in (\ref{Hardy}) is not optimal  for any $p>1$.
This is the case for all Bernoulli distributions.
Let $X$ and $Y$ have a Bernoulli distribution with $P(X=1)=q=1-P(X=0)$.
Then with $\psi(0)=a \geq 0$ and $\psi(1)=b \geq 0$ our Hardy inequality (\ref{Hardy}) becomes
\begin{equation}\label{Bernoulli1}
(1-q) a^p + q \left( (1-q)a+ qb \right)^p \leq \left( \tfrac p{p-1} \right)^p \left( (1-q)a^p+q b^p \right).
\end{equation}
However, by convexity
\begin{eqnarray}\label{Bernoulli2}
\lefteqn{ (1-q)a^p + q \left( (1-q)a+ qb \right)^p \leq (1-q)a^p + q \left( (1-q)a^p+ q b^p \right) \nonumber } \\
&& \hspace{10em} \leq (1+q) \left( (1-q)a^p+q b^p \right)
\end{eqnarray}
holds.
Consequently, for the Bernoulli distribution with success probability $q$ the
optimal constant in our Hardy inequality equals at most $1+q$, for which
\begin{equation}\label{Bernoulli3}
1+q \leq 2 < e = \inf_{p>1} \left( 1 + \tfrac 1{p-1} \right)^p
\end{equation}
holds.
\end{Remark}

\begin{Remark}\label{othertail}
Since $-X$ has distribution function $P(X \geq -x) = 1-F_{-} (-x)$ where $F_{-} (x) \equiv F(x-)$ denotes
the left limit of $F$ at $x$,
Theorem \ref{HardyIneq} immediately implies
\begin{equation} \label{Hardyrighttail}
E\left( \left[ \frac{E\left( \psi(Y) {\bf 1}_{[Y \geq X]} \mid X \right)}{1-F(X-)} \right]^p \right)
\leq \left(\frac p{p-1} \right)^p E\left( \psi^p(Y) \right). \qquad \quad
\end{equation}
Note that (\ref{Hardyrighttail}) can be rewritten as
\begin{eqnarray*}
\int_{{\mathbb R}}  |\overline{H}_F \psi (x) |^p dF(x)
\le \left(\frac p{p-1} \right)^p \int_{{\mathbb R}} | \psi (y) |^p dF(y)
\end{eqnarray*}
where $\overline{H}_F $ is the (right-tail) $F-$averaging operator defined for $x \in {\mathbb R}$ and $\psi \in L_p (F)$ by
\begin{eqnarray}
\overline{H}_F \psi (x) \equiv \frac{\int_{[x,\infty)} \psi(y) d F(y) }{1-F(x-)}
= E\left( \psi(Y) \mid Y \geq x \right) \equiv \Psi (x) .
\label{HardyAveragingOperatorRightTail}
\end{eqnarray}
Thus
\begin{eqnarray*}
E \left( \psi(Y) - \psi(x) \mid Y \ge x \right)  = \overline{H}_F \psi (x) - \psi (x)
\end{eqnarray*}
is the ``mean residual life of $\psi (Y) $'' given $[Y\ge x]$.  In particular, with $\psi (x) = x$,
\begin{eqnarray*}
E \left( Y  - x \mid Y \ge x \right) \equiv \Psi (x) -\psi(x)
\end{eqnarray*}
is the ``mean residual life function'' corresponding to the distribution function $F$.
It turns out that for $\psi (Y) \in L_2 (F)$ and $F$ continuous
\begin{eqnarray*}
{\rm Var} (\psi(Y)) = E \left( \left\{ \psi (Y) - \Psi (Y) \right\}^2 \right),
\end{eqnarray*}
so that the conditional centering operator $I - \overline{H}_F = I - \Psi$
is an isometry.
For more on this and connections to counting process martingales and survival analysis
see
\cite{MR999013},   
\cite{MR1041385},   
and
\cite{MR1623559}.   
\cite{MR4088501}   
studies $I - H$ and $I - H^*$  as operators on $L^p ({\mathbb R}^+, \lambda )$
where $\lambda$ denotes Lebesgue measure.
\end{Remark}

\begin{Remark}\label{WellnerSaumard}
Since the conditional distribution of $X$ given $X \leq c$ has distribution function
$F(\cdot)/F(c)$ for $c \in {\mathbb R}$ and the same holds for $Y$,
we have the following conditional version of (\ref{Hardy})
\begin{eqnarray} \label{Hardyconditional}
\lefteqn{E\left( \left[ \frac{E\left( \psi(Y) {\bf 1}_{[Y \leq X]} \mid X \right)}{F(X)} \right]^p \,\middle\vert\, X \leq c \right) \nonumber } \\
&& = E\left( \left[ \frac{E\left( \psi(Y) {\bf 1}_{[Y \leq X]} \mid Y \leq c, X \right)}{F(X)/F(c)} \right]^p \,\middle\vert\,  X \leq c \right) \\
&& \leq \left(\frac p{p-1} \right)^p E\left( \psi^p(Y) \mid Y \leq c \right), \nonumber
\end{eqnarray}
where the inequality stems from (\ref{Hardy}) itself.

Similarly, we have
\begin{eqnarray} \label{Hardyconditional2}
\lefteqn{E\left( \left[ \frac{E\left( \psi(Y) {\bf 1}_{[Y \geq X]} \mid  X \right)}{1-F(X-)} \right]^p \,\middle\vert\, X > c \right) \nonumber} \\
&& = E\left( \left[ \frac{E\left( \psi(Y) {\bf 1}_{[Y \geq X]} \mid Y > c, X \right)}{(1-F(X-))/(1-F(c))} \right]^p \,\middle\vert\,  X > c \right)\\
&& \leq \left(\frac p{p-1} \right)^p E\left( \psi^p(Y) \mid Y > c \right). \nonumber
\end{eqnarray}

Together (\ref{Hardyconditional}) and (\ref{Hardyconditional2}) improve the generalization given in Theorem 3.2 of
\cite{MR3961231} 
from continuous distribution functions to arbitrary distributions, namely to
\begin{eqnarray}\label{SW}
\lefteqn{E\left( \left[ \frac{E\left( \psi(Y) {\bf 1}_{[Y \leq X]} \mid  X \right)}{F(X)} \right]^p {\bf 1}_{[X \leq c]} \right) \nonumber } \\
&& \ \ \ \ + \ E\left( \left[ \frac{E\left( \psi(Y) {\bf 1}_{[Y \geq X]} \mid  X \right)}{1-F(X-)} \right]^p {\bf 1}_{[X > c]} \right) \nonumber \\
&& = F(c) E\left( \left[ \frac{E\left( \psi(Y) {\bf 1}_{[Y \leq X]} \mid  X \right)}{F(X)} \right]^p \,\middle\vert\, X \leq c \right) \nonumber \\
&& \ \ \ \ + \ (1-F(c)) E\left( \left[ \frac{E\left( \psi(Y) {\bf 1}_{[Y \geq X]} \mid  X \right)}{1-F(X-)} \right]^p \,\middle\vert\, X > c \right) \\
&& \leq \left(\frac p{p-1} \right)^p \left[ F(c) E\left( \psi^p(Y) \mid Y \leq c \right)
                                            + (1-F(c)) E\left( \psi^p(Y) \mid Y > c \right) \right]\nonumber \\
&& = \left(\frac p{p-1} \right)^p E\left( \psi^p(Y) \right). \nonumber
\end{eqnarray}
\end{Remark}

\begin{Remark}\label{implication2}
The Hardy inequality for weighted $L_p$ spaces on $(0,\infty)$, such as Theorem 1.2.1 of
\cite{MR3408787},  
also follows from our Hardy inequality for random variables.
With $0 \le \varepsilon < (p-1)/p$ and $K$ a large constant, we choose
$F(x) = (x/K)^{1-\varepsilon p/(p-1)} \wedge 1,\ x \geq 0$.
This results in the inequality
\begin{eqnarray}\label{e}
\lefteqn{\left[1-\frac{\varepsilon p}{p-1} \right]^{p+1}
            \int_0^K  \left[\int_0^x \psi(y)y^{-\varepsilon p/(p-1)} dy \right]^p x^{p(\varepsilon -1)} dx } \\
&& \leq \left[1-\frac{\varepsilon p}{p-1} \right] \left( \frac p{p-1} \right)^p \int_0^K \psi^p(y) y^{-\varepsilon p/(p-1)} dy . \nonumber
\end{eqnarray}
Taking limits as  $K \to \infty$ and writing $\Psi(y) = \psi(y) y^{-\varepsilon p/(p-1)}$ we arrive at
\begin{equation}\label{f}
\int_0^\infty  \left[\int_0^x \Psi(y) dy \right]^p x^{p(\varepsilon -1)} dx
\leq \left[\frac {p-1}p -\varepsilon \right]^{-p} \int_0^\infty \Psi^p(y) y^{p\varepsilon} dy,
\end{equation}
which is inequality (1.2.1) combined with (1.2.3) of \cite{MR3408787}. 
Note that by choosing $\epsilon = 0$  the inequality in the last display reduces to
(\ref{ContHardyIneq}).
\end{Remark}

\section{Hardy's inequality with weights and mixed norms}\label{Hiww}
To the best of our knowledge the most recent and most general versions of
Hardy's inequalities with weights and mixed norms are
presented by \cite{MR3405817} and
 \cite{MR4061539}.
We shall improve the result of \cite{MR4061539} so that it contains the discrete version of \cite{MR3405817} as a special case.
To this end we prove the result of \cite{MR4061539} with $(-\infty,x)$ in the inner integral
replaced by $(-\infty,x]$, i.e. 
\begin{Theorem}\label{LiMao}
{\bf Hardy's Inequality with Weights and Mixed Norms}\\
Let $1<p \le q < \infty$, and suppose that $\mu$ and $\nu$ are $\sigma-$finite Borel measures on
${\mathbb R}$.  Then
\begin{eqnarray}\label{LiMao1}
\left [ \int_{{\mathbb R}} \left ( \int_{(-\infty,x]} \psi d \nu \right )^q d \mu (x) \right ]^{1/q}
\le k_{q,p}\, B \left [ \int_{{\mathbb R}} \psi^p  d \nu \right ]^{1/p}
\end{eqnarray}
holds for all measurable $\psi : {\mathbb R} \rightarrow [0,\infty)$, where $k_{q,p}$ and $B$ are defined by
\begin{equation}\label{LiMao2}
B \equiv \sup_{x \in {\mathbb R}} \mu ([x,\infty))^{1/q} \nu ((-\infty,x])^{(p-1)/p}
\end{equation}
and, with ${\rm Beta}(a,b) = \int_0^{1}  t^{a-1} (1-t)^{b-1} dt$ and $r \equiv (q-p)/p$,
\begin{equation}\label{LiMao3}
k_{q,p} \equiv \left ( \frac{r}{{\rm Beta}(1/r, (q-1)/r)} \right )^{r/q}  \ \ \mbox{and} \ \ k_{p,p} = p (p-1)^{(1-p)/p}.
\end{equation}
\end{Theorem}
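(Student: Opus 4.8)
The plan is to deduce the general-measure inequality (\ref{LiMao1}) from the sharp \emph{discrete} inequality of \cite{MR3405817} by a discretization-and-refinement argument, exploiting that the constant $k_{q,p}$ is the same in the discrete setting and is independent of the discretization. The whole point of replacing $(-\infty,x)$ by $(-\infty,x]$ in the inner integral is that the discrete inequality accumulates the term at the current index, so only the closed half-line makes the two statements agree on atoms; with open intervals the transfer (and indeed the inequality itself) breaks down at atoms, as the Bernoulli computation in the Remark following Theorem~\ref{HardyIneq} already signals.

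First I would fix a finite partition of $\RR$ into half-open cells $I_i=(x_{i-1},x_i]$ (together with the two unbounded end cells $(-\infty,x_1]$ and $(x_{m-1},\infty)$), choosing the breakpoints $x_i$ among the common continuity points of $\mu$ and $\nu$, which are all but countably many. Writing $\nu_i=\nu(I_i)$ and $\mu_i=\mu(I_i)$, I would bound the left-hand side of (\ref{LiMao1}) for each fixed partition: for $x\in I_j$ monotonicity gives $\int_{(-\infty,x]}\psi\,d\nu\le\int_{(-\infty,x_j]}\psi\,d\nu=\sum_{i\le j}\int_{I_i}\psi\,d\nu$, while H\"older's inequality on each cell gives $\int_{I_i}\psi\,d\nu\le c_i\,\nu_i^{(p-1)/p}$ with $c_i\equiv(\int_{I_i}\psi^p\,d\nu)^{1/p}$. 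Hence
\[ \int_{\RR}\left(\int_{(-\infty,x]}\psi\,d\nu\right)^q d\mu(x)\le\sum_j\mu_j\left(\sum_{i\le j}c_i\,\nu_i^{(p-1)/p}\right)^q, \]
while $\sum_i c_i^p=\int_{\RR}\psi^p\,d\nu$ holds exactly.

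Next I would apply the sharp discrete inequality to the right-hand side. Setting $b_i=c_i\,\nu_i^{(p-1)/p}$ turns the sum into a discrete two-weight Hardy sum with outer weights $\mu_i$ and inner weights $\nu_i^{1-p}$, and a short computation identifies its discrete Muckenhoupt constant as
\[ B_{\mathrm{mesh}}\equiv\sup_j\Big(\sum_{l\ge j}\mu_l\Big)^{1/q}\Big(\sum_{i\le j}\nu_i\Big)^{(p-1)/p}=\sup_j\mu((x_{j-1},\infty))^{1/q}\,\nu((-\infty,x_j])^{(p-1)/p}. \]
Since the constant in \cite{MR3405817} is exactly $k_{q,p}$ times this Muckenhoupt constant, the (fixed) continuous left-hand side is bounded by $(k_{q,p}B_{\mathrm{mesh}})^q\,(\int_{\RR}\psi^p\,d\nu)^{q/p}$ for every admissible partition.

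It remains to let the mesh shrink and show $\inf_{\mathrm{mesh}}B_{\mathrm{mesh}}=B$; this is where I expect the real work to lie. The easy inequality $B_{\mathrm{mesh}}\ge B$ holds for every partition, since the cell $I_j$ containing a near-maximizer $\xi$ of $g(\xi)\equiv\mu([\xi,\infty))^{1/q}\nu((-\infty,\xi])^{(p-1)/p}$ satisfies $x_{j-1}<\xi\le x_j$, whence $\mu((x_{j-1},\infty))\ge\mu([\xi,\infty))$ and $\nu((-\infty,x_j])\ge\nu((-\infty,\xi])$. The delicate direction is $\limsup_{\mathrm{mesh}}B_{\mathrm{mesh}}\le B$: the mesh constant over-counts because it pairs the $\mu$-tail at the left endpoint $x_{j-1}$ with the $\nu$-tail at the right endpoint $x_j$. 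To close this gap I would refine so that every cell carries $\nu$-mass at most $\delta$, and for the cell $I_{j^*}$ nearly attaining $B_{\mathrm{mesh}}$ choose an interior continuity point $\xi$ with $\mu([\xi,\infty))$ within $\delta$ of $\mu((x_{j^*-1},\infty))$; then $\nu((-\infty,\xi])\ge\nu((-\infty,x_{j^*}])-\nu(I_{j^*})\ge\nu((-\infty,x_{j^*}])-\delta$, so $B\ge g(\xi)\ge B_{\mathrm{mesh}}-o(1)$ as $\delta\to0$. Passing to the infimum over partitions then replaces $B_{\mathrm{mesh}}$ by $B$ and yields (\ref{LiMao1}). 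The main obstacle, and the step needing the most care, is precisely this uniform (in $j$) control of the endpoint mismatch so that the sharp constant $k_{q,p}$ survives the passage to the limit; handling atoms correctly throughout is what both forces, and is made possible by, the closed inner interval $(-\infty,x]$.
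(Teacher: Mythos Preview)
Your overall strategy---discretize, invoke the sharp discrete inequality of \cite{MR3405817} with the same constant $k_{q,p}$, then pass to the limit---is exactly the paper's. The H\"older step turning $\int_{I_i}\psi\,d\nu$ into $c_i\nu_i^{(p-1)/p}$ and the identification of the discrete Muckenhoupt constant $B_{\mathrm{mesh}}$ are also essentially the paper's computations. So the architecture is right.

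The gap is in your mesh-refinement step, precisely where you anticipated trouble. Refining so that every cell has $\nu$-mass at most $\delta$ is impossible once $\nu$ has an atom of mass larger than $\delta$; since the whole point of the theorem is to allow arbitrary $\sigma$-finite $\nu$, you cannot assume $\nu$ is non-atomic. Even when $\nu$ is non-atomic, your additive control $\nu((-\infty,\xi])\ge\nu((-\infty,x_{j^*}])-\delta$ does not convert to multiplicative control uniformly in $j$: if the maximizer $j^*$ happens to sit where $\nu((-\infty,x_{j^*}])$ is of order $\delta$ (which can occur when $\mu((x_{j^*-1},\infty))$ is large or infinite), the lower bound is useless. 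Your choice of $\xi$ does handle the $\mu$-factor multiplicatively, but the $\nu$-factor is not tamed.

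The paper resolves this by discretizing the \emph{other} measure, and with a twist. It first truncates $\mu$ to a finite window $[R,S]$ and replaces the tail $\mu([S,\infty))$ by a point mass $M_S>0$ at $S$, obtaining a finite measure $\mu_{R,S}$; then it takes the partition points $y_n$ to be the $\mu_{R,S}$-quantiles $F_{R,S}^{-1}(n/N)$. This guarantees simultaneously that each cell has $\mu_{R,S}$-mass at most $M_{R,S}/N$ and that every right tail $\mu_{R,S}([y_n,\infty))$ is at least $M_S$, so
\[
\mu_{R,S}((y_{n-1},\infty))\le\mu_{R,S}([y_n,\infty))\Bigl(1+\tfrac{M_{R,S}}{N\,M_S}\Bigr)\le(1+\varepsilon)\,\mu_{R,S}([y_n,\infty))
\]
uniformly in $n$. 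That is the multiplicative endpoint control you need, and it yields $B_{\mathrm d}\le(1+\varepsilon)B$ directly, with no appeal to a near-maximizing cell. The passage from $\mu_{R,S}$ back to $\mu$ is then a monotone-convergence argument in $R\downarrow R_0$ and $S\uparrow S_0$. In short: partition by $\mu$-quantiles, not by $\nu$-mass, and first doctor $\mu$ so that the right tail never vanishes on the grid.
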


\begin{Remark}\label{LiMao20}
With the help of Theorem 1.4 of \cite{MR3405817} we shall prove our Theorem \ref{LiMao} in Section \ref{sec:proofs}.
In fact these theorems are equivalent, since our theorem implies his.
For nonnegative $a_i, u_i, v_i,\ i=1, \dots,N$, let $\mu$ and $\nu$ be measures on $\{1, \dots, N \}$ that have densities $u_i$ and
$v_i^{-1/(p-1)}$, respectively, at $i$ with respect to counting measure, and let $\psi(i) = a_i v_i^{1/(p-1)},\ i=1,\dots,N$.
With these choices Theorem \ref{LiMao} yields (\ref{LiMao4}) and (\ref{LiMao5}), and hence Theorem 1.4 of \cite{MR3405817}.
\end{Remark}

\begin{Remark}\label{LiMao17}
Let $C$ be the smallest constant such that
\begin{eqnarray}\label{LiMao18}
\left [ \int_{{\mathbb R}} \left ( \int_{(-\infty,x]} \psi d \nu \right )^q d \mu (x) \right ]^{1/q}
\le C \left [ \int_{{\mathbb R}} \psi^p  d \nu \right ]^{1/p}
\end{eqnarray}
holds in the situation of Theorem \ref{LiMao}.
With $\psi(y) = {\bf 1}_{[y \leq z]}$ this yields
\begin{eqnarray}\label{LiMao19}
\lefteqn{ \left[ \nu((-\infty,z]) \mu([z,\infty)) \right]^{1/q}
\leq \left [ \int_{{\mathbb R}} \left ( \nu((-\infty,x \wedge z]) \right )^q d \mu (x) \right ]^{1/q} } \\
&& \hspace{15em} \le C \left[ \nu((-\infty,z])\right]^{1/p}, \nonumber
\end{eqnarray}
which implies the well known inequality $B \leq C$.
By Theorem~\ref{LiMao} we also have
$C \le k_{q,p}B$ so $C<\infty$ if and only if $B<\infty$.
The constants $k_{q,p}$ first appeared via a (1923) conjecture of
 \cite{MR1574995}  
which was later confirmed by
\cite{MR1574997}.    
See Chapter 5 of
\cite{MR2351524}   
for a very complete history of these developments and further results.
\end{Remark}

Theorem \ref{LiMao} and Remark \ref{LiMao17} may be reformulated in terms of random variables as follows.
\begin{Theorem}\label{LiMaorandom}
{\bf Probability Version of Hardy's Inequality with Weights and Mixed Norms}\\
Let $X$ and $Y$ be independent random variables with distribution functions $F$ and $G$
respectively, let $1<p \leq q < \infty$, and let $U$ and $V$ be nonnegative measurable functions on $({\mathbb R},{\cal B})$.
Furthermore let $C \in [0,\infty]$ be the smallest constant such that
\begin{equation}\label{LiMao25}
\left\{E\left(  \left[ E\left( \tilde{\psi}(Y) {\bf 1}_{[Y \leq X]} \mid X \right) \right]^{q} U(X) \right) \right\}^{1/q}
\leq C\, \left\{E\left( \tilde{\psi}^p(Y) V(Y) \right) \right\}^{1/p}
\end{equation}
holds for all nonnegative measurable functions $\tilde{\psi}$ on $({\mathbb R},{\cal B})$.
With
\begin{equation}\label{LiMao26}
B= \sup_{x \in {\mathbb R}} \left[\int_{[x,\infty)} U dF \right]^{1/q} \left[ \int_{(-\infty, x]} V^{-1/(p-1)} dG \right]^{(p-1)/p},
\end{equation}
the string of inequalities
\begin{equation}\label{LiMao27}
B \leq C \leq k_{q,p} B
\end{equation}
holds, even for $B=\infty$.
\end{Theorem}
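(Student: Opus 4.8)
The plan is to deduce Theorem~\ref{LiMaorandom} from Theorem~\ref{LiMao} together with Remark~\ref{LiMao17}, via a change-of-measure dictionary that is the continuous analogue of the discrete correspondence spelled out in Remark~\ref{LiMao20}. The starting observation is that, because $X$ and $Y$ are independent with $Y \sim G$, conditioning on $X=x$ leaves the law of $Y$ unchanged, so that
\[
E\left( \tilde{\psi}(Y) {\bf 1}_{[Y \leq X]} \mid X = x \right) = \int_{(-\infty,x]} \tilde{\psi}\, dG .
\]
Substituting this into (\ref{LiMao25}) rewrites its left-hand side as the ordinary double integral $\int_{\mathbb R} \bigl(\int_{(-\infty,x]}\tilde\psi\, dG\bigr)^q U(x)\, dF(x)$ and its right-hand side as $C\,[\int_{\mathbb R}\tilde{\psi}^p V\, dG]^{1/p}$, so that (\ref{LiMao25}) is an inequality of exactly the shape treated in Theorem~\ref{LiMao}.

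First I would introduce Borel measures $\mu$ and $\nu$ on ${\mathbb R}$ through their densities with respect to $F$ and $G$, namely $d\mu = U\, dF$ and $d\nu = V^{-1/(p-1)}\, dG$, together with the reparametrised integrand $\psi \equiv \tilde{\psi}\, V^{1/(p-1)}$. With these choices one verifies the three matching identities
\[
\int_{(-\infty,x]} \psi\, d\nu = \int_{(-\infty,x]}\tilde{\psi}\, dG, \qquad \int_{\mathbb R}\psi^p\, d\nu = \int_{\mathbb R}\tilde{\psi}^p V\, dG,
\]
where the second holds because $p/(p-1) - 1/(p-1) = 1$, and
\[
\mu([x,\infty))^{1/q}\, \nu((-\infty,x])^{(p-1)/p} = \left[\int_{[x,\infty)} U\, dF\right]^{1/q}\left[\int_{(-\infty,x]} V^{-1/(p-1)}\, dG\right]^{(p-1)/p}.
\]
Consequently the quantity $B$ of (\ref{LiMao2}) coincides with the $B$ of (\ref{LiMao26}), and under the substitution $\tilde\psi \leftrightarrow \psi$ the probabilistic inequality (\ref{LiMao25}) is literally the analytic inequality (\ref{LiMao18}) for the measures $\mu,\nu$.

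The conclusion is then immediate. Since $\tilde\psi \mapsto \psi = \tilde\psi\, V^{1/(p-1)}$ is a bijection of the nonnegative measurable functions onto themselves, the smallest admissible constant $C$ in (\ref{LiMao25}) equals the smallest admissible constant in (\ref{LiMao18}). Hence the chain (\ref{LiMao27}) transfers verbatim from the analytic setting: the upper bound $C \le k_{q,p}B$ is exactly Theorem~\ref{LiMao}, while the lower bound $B\le C$ is the test-function computation of Remark~\ref{LiMao17}, realised here by taking $\tilde\psi(y) = V^{-1/(p-1)}(y)\,{\bf 1}_{[y \le z]}$ (so that the corresponding $\psi$ equals ${\bf 1}_{[y\le z]}$) and letting $z$ range over ${\mathbb R}$. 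The case $B=\infty$ requires no separate argument: there the lower bound already forces $C=\infty$ while the upper bound is vacuous.

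The step requiring the most care is purely measure-theoretic. One must ensure that $\mu$ and $\nu$ are genuinely $\sigma$-finite and that both $\psi = \tilde\psi\, V^{1/(p-1)}$ and its inverse are well defined. The delicate regions are $\{V=0\}$ and $\{V=\infty\}$ (and the analogous degeneracies of $U$), where $V^{-1/(p-1)}$ either blows up or vanishes; there I would restrict to $\{0<V<\infty\}$ and check, using the usual conventions $0/0=0$ and $0\cdot\infty=0$, that neither side of (\ref{LiMao25}) nor the supremum (\ref{LiMao26}) is altered by discarding the degenerate part. Once this bookkeeping is dispatched, the equivalence with the analytic formulation of Theorem~\ref{LiMao} is exact and nothing further is needed.
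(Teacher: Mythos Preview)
Your proposal is correct and follows exactly the paper's own route: define $d\mu = U\,dF$, $d\nu = V^{-1/(p-1)}\,dG$, $\psi = \tilde\psi\,V^{1/(p-1)}$, and read off Theorem~\ref{LiMaorandom} from Theorem~\ref{LiMao} and Remark~\ref{LiMao17}. If anything you are more explicit than the paper, which states the substitution in a single sentence and does not pause over the $\sigma$-finiteness/degeneracy bookkeeping you flag in your last paragraph.
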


\begin{proof}
Theorem \ref{LiMaorandom} is implied by Theorem \ref{LiMao} via the choices
$\mu([x,\infty)) = \int_{[x,\infty)}UdF,\\ \nu((-\infty,x]) = \int_{(-\infty,x]}V^{-1/(p-1)} dG$ and $\psi = \tilde{\psi} V^{1/(p-1)}$.

With $\mu$ a $\sigma$-finite measure and $\cup_{i=1}^\infty A_i = (0,\infty)$
a partition with $0< \mu(A_i) < \infty, i=1,2,\dots,$ the measure
$P(B) = \sum_{i=1}^\infty 2^{-i} \mu(B \cap A_i)/ \mu(A_i), B \in {\cal B},$ is a probability measure dominating $\mu$.
Let $F$ and $G$ be the distribution functions of probability measures dominating the
measures $\mu$ and $\nu$, respectively, from Theorem \ref{LiMao}.
The choices $U(x) = d\mu /dF(x)$ and $V(y) =(d\nu/dG(y))^{1-p}$ show that Theorem \ref{LiMaorandom} implies Theorem \ref{LiMao}.
\end{proof}

Following the arguments of
 \cite{MR0311856}, in Section \ref{sec:proofs}  we prove
the following generalization of his result, which is the special case $q=p$ of our Theorems \ref{LiMao} and \ref{LiMaorandom}.
\begin{Theorem}\label{MuckenhouptII}
{\bf Probability Version of Muckenhoupt's Inequality}\\
Let $X$ and $Y$ be independent random variables with distribution functions $F$ and $G$
respectively, let $p>1$, and let $U$ and $V$ be nonnegative measurable functions on $({\mathbb R},{\cal B})$.
Furthermore let $C \in [0,\infty]$ be the smallest constant such that
\begin{equation}\label{M1II}
E\left(  \left[ E\left( \psi(Y) {\bf 1}_{[Y \leq X]} \mid X \right) \right]^{p} U(X) \right)
\leq C\, E\left( \psi^p(Y) V(Y) \right)
\end{equation}
holds for all nonnegative measurable functions $\psi$ on $({\mathbb R},{\cal B})$.
With
\begin{equation}\label{M2II}
B= \sup_{x \in {\mathbb R}} \int_{[x,\infty)} U dF \left[ \int_{(-\infty, x]} V^{-1/(p-1)} dG \right]^{p-1} ,
\end{equation}
the string of inequalities
\begin{equation}\label{M3.5II}
B \leq C \leq \frac {p^p}{(p-1)^{p-1}} B
\end{equation}
holds, even for $B=\infty$.
\end{Theorem}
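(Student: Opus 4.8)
The quickest route is to recognize Theorem~\ref{MuckenhouptII} as the specialization $q=p$ of Theorem~\ref{LiMaorandom}. Setting $q=p$ in (\ref{LiMao25}) and raising both sides to the $p$-th power converts the normed inequality into (\ref{M1II}), so that the optimal Muckenhoupt constant $C$ in (\ref{M1II}) is the $p$-th power of the optimal constant in (\ref{LiMao25}); likewise the quantity $B$ of (\ref{LiMao26}) with $q=p$, raised to the power $p$, is exactly the $B$ of (\ref{M2II}). Since $k_{p,p}^{\,p} = \left[ p\,(p-1)^{(1-p)/p}\right]^p = p^p/(p-1)^{p-1}$, taking $p$-th powers throughout the string $B \le C \le k_{p,p}B$ of Theorem~\ref{LiMaorandom} yields precisely (\ref{M3.5II}). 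This deduction is not circular, because Theorem~\ref{LiMao} is itself obtained from the discrete inequality of \cite{MR3405817} rather than from any Muckenhoupt-type statement.

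Because a self-contained argument in the spirit of \cite{MR0311856} is flagged, I would also prove the two bounds of (\ref{M3.5II}) directly. For the lower bound $B \le C$ I would feed the test function $\psi = V^{-1/(p-1)} {\bf 1}_{(-\infty,z]}$ into (\ref{M1II}). Writing $N(z)=\int_{(-\infty,z]} V^{-1/(p-1)}\,dG$ and $M(z)=\int_{[z,\infty)} U\,dF$, the right-hand side of (\ref{M1II}) collapses to $C\,N(z)$, while on the event $[X\ge z]$ the inner conditional expectation $E(\psi(Y){\bf 1}_{[Y\le X]}\mid X)$ saturates at $N(z)$, so the left-hand side is at least $N(z)^p M(z)$. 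Cancelling one factor of $N(z)$ and taking the supremum over $z$ gives $B=\sup_z M(z)\,N(z)^{p-1}\le C$ (the degenerate cases $N(z)\in\{0,\infty\}$ being handled by monotone limits).

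For the upper bound $C\le p^p/(p-1)^{p-1}\,B$, assume $B<\infty$ and reduce to a plain weighted Hardy inequality by the substitution $\phi=\psi\,V^{1/(p-1)}$, $d\nu=V^{-1/(p-1)}\,dG$, $d\mu=U\,dF$, which turns (\ref{M1II}) into $\int_{{\mathbb R}}\left(\int_{(-\infty,x]}\phi\,d\nu\right)^p d\mu(x)\le C\int_{{\mathbb R}}\phi^p\,d\nu$ with $B=\sup_x \mu([x,\infty))\,\nu((-\infty,x])^{p-1}$. Setting $W(x)=\nu((-\infty,x])$, I would insert a factor $W^{-s}W^{s}$ inside the inner integral and apply H\"older with exponents $p$ and $p/(p-1)$, choosing the exponent $s<(p-1)/p$ so that $\int_{(-\infty,x]} W^{-sp/(p-1)}\,d\nu$ is a clean multiple of $W(x)^{1-sp/(p-1)}$; optimizing $s$ produces the sharp constant $p^p/(p-1)^{p-1}$. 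An application of Tonelli's theorem to interchange the outer $\mu$-integral with the inner $\nu$-integral, together with the definition of $B$, then reassembles the desired bound.

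The main obstacle is this last step in the presence of atoms of $\nu$ (equivalently, jumps of $G$ or point masses of $V^{-1/(p-1)}\,dG$): the telescoping identity $\int_{(-\infty,x]} W^{-sp/(p-1)}\,dW \asymp W(x)^{1-sp/(p-1)}$ is exact only when $W$ is continuous, and at a jump one must replace it by a summation-by-parts estimate. It is exactly here that the closed endpoint $(-\infty,x]$ (rather than $(-\infty,x)$) is essential, in parallel with the Remark following Theorem~\ref{HardyIneq}; the discrete inequality of \cite{MR3405817} already encodes the correct handling of atoms, which is why routing the general-measure statement through it---i.e., presenting the $q=p$ specialization of the first paragraph as the primary proof---sidesteps the obstacle cleanly.
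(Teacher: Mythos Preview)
Your deduction in the first paragraph from Theorem~\ref{LiMaorandom} with $q=p$ is correct, and the paper itself remarks that Theorem~\ref{MuckenhouptII} is this special case. However, the paper does \emph{not} take this route: it gives a self-contained Muckenhoupt-style argument, precisely the ``direct'' proof you sketch in your second and third paragraphs.

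For the lower bound $B\le C$ your test function $\psi=V^{-1/(p-1)}{\bf 1}_{(-\infty,z]}$ and the restriction to $[X\ge z]$ match the paper's argument exactly (display~(\ref{M11II})).

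For the upper bound your H\"older-with-weight-$W^{-s}$ scheme is the paper's idea as well, but the paper handles the atom obstacle you flag in your last paragraph directly rather than retreating to Theorem~\ref{LiMao}. The device is Lemma~\ref{LemmaMII}: for any distribution function $G_x$ and $0<\gamma<1$ one has $\gamma\int G_x^{\gamma-1}\,dG_x\le 1$, proved by the quantile identity $G_x(G_x^{-1}(U))\ge U$ for $U$ uniform on $(0,1)$. This gives exactly the one-sided estimate $\int_{(-\infty,x]}W^{-1/p}\,dW\le \tfrac{p}{p-1}W(x)^{(p-1)/p}$ (and its mirror for the outer $\mu$-integral) that your telescoping equality fails to deliver at jumps. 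With this lemma in hand the paper inserts $h(y)=V^{1/p}(y)\,W(y)^{(p-1)/p^2}$, applies H\"older, invokes Lemma~\ref{LemmaMII} once on the inner $G$-integral and once on the outer $F$-integral, and closes with the definition of $B$. So the missing ingredient in your direct argument is not a discretization step but this elementary one-sided inequality, which resolves the atom issue in two lines.
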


\begin{Remark}
With $U= G^{-p}, V=1$ and $G=F$ the second inequality in (\ref{M3.5II}) does {\em not}
imply our Hardy inequality (\ref{Hardy}).
Indeed, for Bernoulli random variables with $P(X=1)= 1/p = 1-P(X=0)$ the factor $B$
equals $1+(p-1)^{p-1}/p^p$ then and hence the upper bound on $C$ equals $1+ p^p/(p-1)^{p-1}$,
which is larger than $(p/(p-1))^p$ for $p \geq p_0 \approx 1.77074$.  \\
However, with $U= G^{-p}, V=1$ and $G=F$ a continuous distribution function the factor
$B$ equals $1/(p-1)$, which shows that (\ref{M3.5II}) {\em does} imply our Hardy inequality (\ref{Hardy}) for this case.
\end{Remark}

If $X$ is stochastically larger than $Y,\ Y \preceq X,$ and they have no point masses at the same location,
then Theorem \ref{MuckenhouptII} yields an inequality very similar to (\ref{Hardy}). A comparable result is obtained for $X \preceq Y$.
\begin{Corollary}
\label{HardyOrdering1}
{\bf Stochastic ordering}\\
Let $X$ and $Y$ be independent random variables with distribution functions $F$ and $G$
respectively, let $p>1$, and let $\psi$ be a nonnegative measurable function on $({\mathbb R},{\cal B})$. \\
(a) If $P(X=Y)=0$ and $F(x) \le G(x),\, x \in {\mathbb R},$ hold, then
\begin{equation}
\label{HardyOrdering2}
E\left( \left[ \frac{E\left( \psi(Y) {\bf 1}_{[Y \leq X]} \mid X \right)}{G(X)} \right]^p \right)
\leq \left(\frac p{p-1} \right)^p E\left( \psi^p(Y) \right) \qquad \quad
\end{equation}
is valid. \\
(b) If $F$ is continuous and $F(x) \ge G(x),\, x \in {\mathbb R},$ holds, then
\begin{equation}
\label{HardyOrdering3}
E\left( \left[ \frac{E\left( \psi(Y) {\bf 1}_{[Y \leq X]} \mid X \right)}{F(X)} \right]^p \right)
\leq \left(\frac p{p-1} \right)^p E\left( \psi^p(Y) \right) \qquad \quad
\end{equation}
is valid.
\end{Corollary}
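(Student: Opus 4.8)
The plan is to treat the two parts by different routes, in each case reducing to a basic inequality already proved. The common structure is that the inner quantity $E(\psi(Y){\bf 1}_{[Y\le X]}\mid X=x)=\int_{(-\infty,x]}\psi\,dG$ is governed by the law $G$ of $Y$, while the outer expectation is taken under $F$; the two parts differ in whether the chosen normalization is compatible with Muckenhoupt's functional, and this dictates the method.

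For part (b) I would apply Theorem~\ref{MuckenhouptII} directly with the self-normalization $U=F^{-p}$ and $V\equiv1$, so that (\ref{M1II}) becomes exactly the left side of (\ref{HardyOrdering3}) and its right side is $C\,E(\psi^p(Y))$. It then remains to check $B\le1/(p-1)$ for the functional (\ref{M2II}). Here the continuity of $F$ is decisive: it makes the self-integral exact, $\int_{[x,\infty)}F^{-p}\,dF=\tfrac1{p-1}\bigl(F(x)^{-(p-1)}-1\bigr)\le\tfrac1{p-1}F(x)^{-(p-1)}$, while the hypothesis $G\le F$ gives $G(x)^{p-1}\le F(x)^{p-1}$ for the second factor of $B$. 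Multiplying yields $B\le1/(p-1)$, and the right inequality of (\ref{M3.5II}) gives $C\le\tfrac{p^p}{(p-1)^{p-1}}\cdot\tfrac1{p-1}=\bigl(\tfrac p{p-1}\bigr)^p$, which is (\ref{HardyOrdering3}). Nothing here is delicate beyond the bookkeeping.

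Part (a) is the genuinely harder one, and I would \emph{not} rely on Theorem~\ref{MuckenhouptII}. The natural choice $U=G^{-p},\,V\equiv1$ forces one to bound $\int_{[x,\infty)}G^{-p}\,dF\cdot G(x)^{p-1}$, an integral of the ``wrong'' weight against $F$; when $F$ puts mass high in the support of $G$ (for instance $X$ degenerate at a continuity point of $G$ where $G\approx1$) this functional can exceed $1/(p-1)$, so that route delivers only the weaker constant $p^p/(p-1)^{p-1}$. Instead I would argue probabilistically, in the spirit of Landau's reduction recalled in the Introduction. Writing $H_G\psi(x)=E(\psi(Y)\mid Y\le x)$, the left side of (\ref{HardyOrdering2}) is $E\bigl[(H_G\psi(X))^p\bigr]$. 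First replace $\psi$ by its decreasing rearrangement $\psi^{\#}$ with respect to the measure $G$: the Hardy--Littlewood rearrangement inequality gives $\int_{(-\infty,x]}\psi\,dG\le\int_{(-\infty,x]}\psi^{\#}\,dG$ for every $x$, hence $H_G\psi\le H_G\psi^{\#}$ pointwise (with the convention $0/0=0$ where $G(x)=0$), while $E(\psi^p(Y))=E((\psi^{\#})^p(Y))$ is unchanged by equimeasurability. Since $\psi^{\#}$ is $G$-decreasing, its running $G$-average $H_G\psi^{\#}$ is a decreasing function of $x$, and so is $\eta:=(H_G\psi^{\#})^p$.

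Now the ordering enters: $F\le G$ means $Y\preceq X$, so for the decreasing $\eta$ we have $E[\eta(X)]\le E[\eta(Y)]$. Chaining,
\[
E\bigl[(H_G\psi(X))^p\bigr]\le E\bigl[(H_G\psi^{\#}(X))^p\bigr]\le E\bigl[(H_G\psi^{\#}(Y))^p\bigr],
\]
and the last expectation is controlled by Theorem~\ref{HardyIneq} applied with common distribution function $G$ to the function $\psi^{\#}$, giving the bound $\bigl(\tfrac p{p-1}\bigr)^pE((\psi^{\#})^p(Y))=\bigl(\tfrac p{p-1}\bigr)^pE(\psi^p(Y))$; this is (\ref{HardyOrdering2}). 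The step I expect to require the most care is the rearrangement for a general, possibly atomic, $G$: one must verify the Hardy--Littlewood inequality and the monotonicity of $H_G\psi^{\#}$ in the measure-theoretic (rather than Lebesgue) setting, and confirm that $P(X=Y)=0$ together with the $0/0=0$ convention keeps $H_G\psi(X)$ well defined. The stochastic-ordering step and the appeal to Theorem~\ref{HardyIneq} are then routine.
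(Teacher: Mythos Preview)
For part (b) your argument coincides with the paper's: Theorem~\ref{MuckenhouptII} with $U=F^{-p}$, $V=1$, continuity of $F$ to evaluate $\int_{[x,\infty)}F^{-p}\,dF=\tfrac{1}{p-1}(F(x)^{1-p}-1)$ exactly, and $G\le F$ to obtain $B\le 1/(p-1)$.

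For part (a) you and the paper diverge. Contrary to your expectation, the paper \emph{does} run Theorem~\ref{MuckenhouptII} with $U=G^{-p}$, $V=1$: it splits on whether $F$ has an atom at $r$ (here $P(X=Y)=0$ enters, forcing $F$ and $G$ to have disjoint atoms) and in each case asserts the comparison $\int_{(r,\infty)}G^{-p}\,dF\le\int_{(r,\infty)}G^{-p}\,dG$ as a consequence of $Y\preceq X$. Your instinct that this route is fragile is well founded: the test function $y\mapsto G^{-p}(y){\bf 1}_{(r,\infty)}(y)$ is not monotone, and your own example type (take $G$ uniform on $[0,1]$ and $X$ degenerate at $1$, which satisfies $F\le G$ and $P(X=Y)=0$) gives $\int_{(r,\infty)}G^{-p}\,dF=1$ for every $r<1$ while $\int_{(r,\infty)}G^{-p}\,dG=(r^{1-p}-1)/(p-1)\to 0$ as $r\uparrow 1$; thus the paper's displayed inequality fails there and indeed $B=1>1/(p-1)$ for $p>2$. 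So your rearrangement argument is not merely an alternative but a repair.

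One caution on your route: the step you flag is a genuine obstruction, not just bookkeeping. For atomic $G$ a nonincreasing $\psi^{\#}$ on ${\mathbb R}$ that is $G$-equimeasurable with $\psi$ need not exist (unequal atom masses make the required level sets incompatible). The clean fix is to bypass $\psi^{\#}$ and work on $[0,1]$: with $\psi^*$ the decreasing rearrangement of $\psi$ with respect to $G$ and $\Psi(s)=s^{-1}\int_0^s\psi^*$, the Hardy--Littlewood inequality gives $H_G\psi(x)\le\Psi(G(x))$, the right side is nonincreasing in $x$, stochastic ordering yields $E[\Psi(G(X))^p]\le E[\Psi(G(Y))^p]$, and since $G(G^{-1}(u))\ge u$ with $\Psi$ nonincreasing one has $E[\Psi(G(Y))^p]\le\int_0^1\Psi(u)^p\,du$, which (\ref{Hardyuniform}) bounds by $(p/(p-1))^p\int_0^1(\psi^*)^p=(p/(p-1))^pE\psi^p(Y)$. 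Executed this way the argument never uses $P(X=Y)=0$.
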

\begin{proof}
In case (a) we apply Theorem \ref{MuckenhouptII} with $U= G^{-p}$ and $V=1$.
Then $B$ from (\ref{M2II}) equals
\begin{equation}\label{HardyOrdering4}
B= \sup_{r \in {\mathbb R}}  G^{p-1}(r) \int_{[r,\infty)} G^{-p} dF.
\end{equation}
If $F$ has no point mass at $r$, then the stochastic ordering $Y \preceq X$ implies
\begin{eqnarray}\label{HardyOrdering5}
\lefteqn{ \int_{[r,\infty)} G^{-p} dF = \int_{(r,\infty)} G^{-p} dF \leq \int_{(r,\infty)} G^{-p} dG \nonumber } \\
&& = \int_{G^{-1}(u) > r} (G(G^{-1}(u)))^{-p} du \leq \int_{[G(r),1)} (G(G^{-1}(u)))^{-p} du \\
&& \leq \int_{[G(r),1)} u^{-p} du = \frac 1{p-1} \left[ G^{1-p}(r) -1 \right] \leq \frac{G^{1-p}(r)}{p-1}. \nonumber
\end{eqnarray}
In the first line of the last display and in the second line below
 we use the characterization
$Y \preceq X$ if and only if $E h(Y) \le E h(X)$ for all bounded and non-decreasing functions $h$;
see e.g.
\cite{MR1889865}  
Theorem 1.2.8 (ii),  page 5, or
\cite{MR2265633}    
(1.A.7), page 4.

If $F$ has a point mass at $r$ then $G$ has not and the stochastic ordering $Y \preceq X$ implies
\begin{eqnarray}\label{HardyOrdering6}
\lefteqn{ \int_{[r,\infty)} G^{-p} dF \leq \int_{[r,\infty)} G^{-p} dG } \\
&& = \int_{(r,\infty)} G^{-p} dG = \frac 1{p-1} \left[ G^{1-p}(r) -1 \right] \leq \frac{G^{1-p}(r)}{p-1}. \nonumber
\end{eqnarray}
Combining (\ref{HardyOrdering4})--(\ref{HardyOrdering6}) and (\ref{M1II})--(\ref{M3.5II}) we arrive at (\ref{HardyOrdering2}).\\
In case (b) we apply Theorem \ref{MuckenhouptII} with $U= F^{-p}$ and $V=1$.
Then the continuity of $F$ and $G \leq F$ imply that $B$ from (\ref{M2II}) satisfies
\begin{eqnarray}\label{HardyOrdering7}
\lefteqn{ B = \sup_{r \in {\mathbb R}} G^{p-1}(r) \left (\int_{[r,\infty)} F^{-p} dF  \right ) 
            = \sup_{r \in {\mathbb R}} \frac 1{p-1} \left[F^{1-p}(r) -1 \right] G^{p-1}(r) \nonumber } \\
&& \hspace{12em} \leq \sup_{r \in {\mathbb R}} \frac 1{p-1} \left[ \frac{G(r)}{F(r)} \right]^{p-1} = \frac 1{p-1}
\end{eqnarray}
and hence that (\ref{HardyOrdering3}) holds.
\end{proof}

\section{A reverse Hardy inequality}\label{ArHi}

There are also reversed versions of the classical Hardy inequality:
the {\sl continuous (or integral form) inequality} says,
if $p>1$ and $\psi$ is a nonnegative, nonincreasing $p-$integrable function on $(0,\infty)$, then
\begin{eqnarray}\label{ReverseContHardyIneq}
\int_0^\infty \left ( \frac{1}{x} \int_0^x \psi (y) dy \right )^p \, dx
\geq \frac{p}{p-1} \int_0^{\infty} \psi^p (y) dy,
\end{eqnarray}
while the {\sl discrete  (or series form) inequality} says,
if $p>1$ and  $\{ c_n \}_1^\infty$ is a nonincreasing sequence of nonnegative real numbers,
then
\begin{eqnarray}\label{ReverseDiscHardyIneq}
\sum_{n=1}^\infty \left ( \frac{1}{n} \sum_{k=1}^n c_k \right )^p
\geq \zeta(p)  \sum_{k=1}^\infty c_{k}^p .
\end{eqnarray}
Here, $\zeta(\cdot)$ is the zeta function.
These inequalities have been obtained independently by \cite{MR854569} 
and
 \cite{MR858964};  
see also Lemma 2.1 of \cite{MR1438149}.  
By taking $\psi$ the indicator function of the unit interval we see that (\ref{ReverseContHardyIneq}) is sharp and by taking
$c_1=1, c_2= c_3 = \cdots =0$ that (\ref{ReverseDiscHardyIneq}) is sharp.

Here are our random variable versions of (\ref{ReverseContHardyIneq}) and (\ref{ReverseDiscHardyIneq}).

\begin{Theorem}
{\bf Reverse Hardy inequality}
\label{ReverseHardyIneq}\\
Let $X$ and $Y$ be independent random variables both with distribution function
$F$ on $({\mathbb R}, {\cal B})$, and let $\psi $ be a nonnegative,
nonincreasing measurable function on $({\mathbb R}, {\cal B})$.
For $p>1$ and $F$ absolutely continuous
\begin{eqnarray}
E\left( \left[ \frac{E\left( \psi(Y) {\bf 1}_{[Y \leq X]} \mid X \right)}{F(X)} \right]^p \right)
& \ge & \frac p{p-1} E\left( \psi^p(Y) \left[1-F^{p-1}(Y) \right]  \right) \nonumber  \\
& \ge & E\left( \psi^p(Y) \right) \label{ReverseDensityHardy}
\end{eqnarray}
holds with equalities if $\psi$ is constant.

For $p \geq 1$ and $F$ general
\begin{equation}\label{ReverseGeneralHardy}
E\left( \left[ \frac{E\left( \psi(Y) {\bf 1}_{[Y \leq X]} \mid X \right)}{F(X)} \right]^p \right) \geq E\left( \psi^p(Y) \right)
\end{equation}
holds with equalities if $\psi$ is constant.

If $F$ is general, but $p \geq 2$ is an integer, then, with
$X,Y,X_1, \dots, X_p$ independent and identically distributed
and with $X_{(p)}=\max \{X_1, \dots, X_p \}$, we have
\begin{equation}\label{ReverseIntegerHardy}
E\left( \left[ \frac{E\left( \psi(Y) {\bf 1}_{[Y \leq X]} \mid X \right)}{F(X)} \right]^p \right)
\geq E\left( \psi^p(X_{(p)}) E \left( F^{-p}(Y) {\bf 1}_{[Y \geq X_{(p)}]} \mid X_{(p)} \right) \right)
\end{equation}
with equality if $\psi$ is constant.
\end{Theorem}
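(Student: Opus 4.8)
The plan is to treat the three displays separately and in increasing order of difficulty, the two general-$F$ statements being soft and the absolutely continuous statement carrying the sharp constant $p/(p-1)$. Throughout I write the left-hand side as $E[(H_F\psi(X))^p]$ with $H_F$ the $F$-averaging operator of Corollary \ref{ThmAnalysisForm}. I would establish (\ref{ReverseIntegerHardy}) first, as it is the most structural. For integer $p\ge2$, introduce independent copies $Y_1,\dots,Y_p$ of $Y$ and expand the inner $p$th power as
\[
\bigl[E(\psi(Y){\bf 1}_{[Y\le X]}\mid X)\bigr]^p = E\Bigl(\prod_{i=1}^p \psi(Y_i){\bf 1}_{[Y_i\le X]}\,\Big|\,X\Bigr) = E\Bigl(\prod_{i=1}^p \psi(Y_i)\,{\bf 1}_{[Y_{(p)}\le X]}\,\Big|\,X\Bigr),
\]
where $Y_{(p)}=\max_i Y_i$. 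Taking the outer expectation and, by Fubini, integrating out $X$ after conditioning on $(Y_1,\dots,Y_p)$ turns the left-hand side into $E[\prod_{i=1}^p\psi(Y_i)\,E(F^{-p}(X){\bf 1}_{[X\ge Y_{(p)}]}\mid Y_{(p)})]$. Since $\psi$ is nonincreasing and $Y_{(p)}$ is the largest of the $Y_i$, each $\psi(Y_i)\ge\psi(Y_{(p)})\ge0$, whence $\prod_i\psi(Y_i)\ge\psi^p(Y_{(p)})$; inserting this bound and relabelling the i.i.d.\ variables $(Y_1,\dots,Y_p,X)$ as $(X_1,\dots,X_p,Y)$ gives exactly (\ref{ReverseIntegerHardy}), with equality when $\psi$ is constant because then the product equals $\psi^p(Y_{(p)})$.

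The general bound (\ref{ReverseGeneralHardy}) then follows from monotonicity alone. Because $\psi$ is nonincreasing, $\int_{(-\infty,x]}\psi\,dF\ge\psi(x)F(x)$, i.e.\ $H_F\psi(x)=E(\psi(Y)\mid Y\le x)\ge\psi(x)$ pointwise. Raising to the $p$th power, taking expectations, and using that $X$ and $Y$ share the law $F$, I obtain $E[(H_F\psi(X))^p]\ge E(\psi^p(X))=E(\psi^p(Y))$, again with equality for constant $\psi$. This argument needs only $p\ge1$ and no continuity of $F$.

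For the sharp chain (\ref{ReverseDensityHardy}) I would pass to uniform coordinates. For continuous $F$ (in particular under the absolute continuity assumed here) the quantile transform gives $F(X),F(Y)\sim U(0,1)$, and, writing $g=\psi\circ F^{-1}$ (nonincreasing) and $A(u)=u^{-1}\int_0^u g(v)\,dv$, one has $H_F\psi(X)=A(F(X))$, so the three terms become $\int_0^1 A^p$, $\int_0^1 g^p(v)(1-v^{p-1})\,dv$ and $\int_0^1 g^p$. The heart of the matter is the exact identity
\[
\int_0^1 A^p(u)\,du = \frac{p}{p-1}\int_0^1 A^{p-1}(v)\,g(v)\,(1-v^{p-1})\,dv .
\]
I would derive it by integrating $\int_0^1 (G(u)/u)^p\,du$ by parts, with $G(u)=\int_0^u g$ (the boundary term at $0$ vanishes since $uA^p(u)\le\int_0^u A^p\to0$), obtaining $\int_0^1 A^p=\frac{p}{p-1}\int_0^1 A^{p-1}g\,du-\frac1{p-1}G^p(1)$, and then eliminating $G^p(1)=(\int_0^1 g)^p=p\int_0^1 v^{p-1}A^{p-1}(v)g(v)\,dv$, the latter being $\int_0^1\tfrac{d}{dv}G^p(v)\,dv$ rewritten through $G=vA$. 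As $A\ge g\ge0$ and $p>1$ force $A^{p-1}g\ge g^p$, while $1-v^{p-1}\ge0$ on $[0,1]$, the identity immediately gives the first inequality of (\ref{ReverseDensityHardy}). For the second inequality I note it is equivalent to $\int_0^1 g^p(v)(1-pv^{p-1})\,dv\ge0$; here the weight $1-pv^{p-1}$ integrates to $0$ and is nonincreasing, as is $g^p$, so this is Chebyshev's integral (correlation) inequality for similarly ordered functions. Equality throughout holds for constant $\psi$.

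I expect the sharp constant in (\ref{ReverseDensityHardy}) to be the main obstacle. The integration-by-parts identity $\int_0^1 A^p=\frac{p}{p-1}\int_0^1 A^{p-1}g-\frac1{p-1}G^p(1)$ together with the crude bound $A^{p-1}g\ge g^p$ only gives $\int_0^1 A^p\ge\frac{p}{p-1}\int_0^1 g^p-\frac1{p-1}(\int_0^1 g)^p$; to turn this into the target weighted bound one would need the auxiliary inequality $(\int_0^1 g)^p\le p\int_0^1 g^p(v)v^{p-1}\,dv$, which is false in general (already for $g(v)=2-2v$, $p=2$). The decisive move is thus to keep the boundary term exactly, writing $G^p(1)=p\int_0^1 v^{p-1}A^{p-1}g\,dv$; this converts the whole defect into the manifestly nonnegative $p\int_0^1 g(v)\,(A^{p-1}(v)-g^{p-1}(v))\,(1-v^{p-1})\,dv\ge0$. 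A secondary, routine matter is justifying the integrations by parts and the identities when $g$ is unbounded near $0$ or a side is infinite, which I would handle by truncation and monotone convergence.
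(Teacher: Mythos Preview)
Your proof is correct. The arguments for (\ref{ReverseGeneralHardy}) and (\ref{ReverseIntegerHardy}) coincide with the paper's: the paper likewise expands the $p$th power over i.i.d.\ copies (labelled $X_1,\dots,X_p$ rather than $Y_1,\dots,Y_p$), uses $\prod_i\psi(X_i){\bf 1}_{[X_i\le x]}\ge\psi^p(X_{(p)}){\bf 1}_{[X_{(p)}\le x]}$ from monotonicity, and for (\ref{ReverseGeneralHardy}) simply notes that $H_F\psi\ge\psi$ pointwise when $\psi$ is nonincreasing. For the second inequality in (\ref{ReverseDensityHardy}) both you and the paper invoke the Chebyshev correlation inequality.

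For the first inequality in (\ref{ReverseDensityHardy}) the paper takes a slightly different and more elementary route that avoids integration by parts. Working directly with the density $f$ of $F$, it bounds the derivative
\[
\frac{d}{dx}\Bigl[\int_{-\infty}^x\psi\,dF\Bigr]^p=p\Bigl[\int_{-\infty}^x\psi\,dF\Bigr]^{p-1}\psi(x)f(x)\ \ge\ p\,\psi^p(x)F^{p-1}(x)f(x),
\]
integrates, multiplies by $F^{-p}$, and applies Fubini to reach $\frac{p}{p-1}E\bigl(\psi^p(Y)(1-F^{p-1}(Y))\bigr)$. In your uniform coordinates this is the order ``bound $(G^p)'\ge pg^pu^{p-1}$ first, then integrate $\int_0^1 u^{-p}(\cdot)\,du$ via Fubini'', whereas you integrate by parts first to the exact identity $\int_0^1 A^p=\frac{p}{p-1}\int_0^1 A^{p-1}g\,(1-v^{p-1})\,dv$ and only then bound $A^{p-1}g\ge g^p$. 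Both routes rest on the same pointwise inequality $A\ge g$; yours has the merit of isolating the slack as the manifestly nonnegative $\frac{p}{p-1}\int_0^1(A^{p-1}-g^{p-1})g\,(1-v^{p-1})\,dv$ and of needing only continuity of $F$, while the paper's avoids any boundary-term discussion or separate handling of $G^p(1)$.
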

\medskip

The continuous version (\ref{ReverseContHardyIneq}) of the reverse Hardy inequality is
contained in (\ref{ReverseDensityHardy}) and the discrete version (\ref{ReverseDiscHardyIneq})
for integer $p$ follows from (\ref{ReverseIntegerHardy}).

\begin{Corollary}\label{ReverseInequalities} \hfill \newline
(i) \ For any $p>1$ and nonnegative, nonincreasing $\psi \in L_p$,  inequality (\ref{ReverseContHardyIneq}) holds.   \\
(ii) \ For any integer $p>1$ and nonnegative, nonincreasing sequence $\{ c_n \}_{n=1}^\infty \in \ell_p$,
inequality (\ref{ReverseDiscHardyIneq}) holds.
\end{Corollary}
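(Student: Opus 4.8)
The plan is to obtain both parts by specializing Theorem~\ref{ReverseHardyIneq} to uniform distributions and letting the support grow, exactly as in the proof of Corollary~\ref{ClassicalCont-Disc-Ineq}.

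For part~(i) I would take $X$ and $Y$ uniform on $[0,K]$, so that $F$ is absolutely continuous with $F(x)=x/K$ and density $1/K$ on $[0,K]$. Then $E(\psi(Y){\bf 1}_{[Y\le X]}\mid X=x)=\tfrac1K\int_0^x\psi(y)\,dy$, so the ratio inside the $p$th power equals $\tfrac1x\int_0^x\psi$, and the left member of (\ref{ReverseDensityHardy}) becomes $\tfrac1K\int_0^K(\tfrac1x\int_0^x\psi)^p\,dx$, while its middle member becomes $\tfrac p{p-1}\tfrac1K\int_0^K\psi^p(y)[1-(y/K)^{p-1}]\,dy$ (the nonincreasing hypothesis on $\psi$ being exactly what licenses (\ref{ReverseDensityHardy})). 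Multiplying by $K$ and letting $K\to\infty$, monotone convergence sends the left side up to $\int_0^\infty(\tfrac1x\int_0^x\psi)^p\,dx$, and since $1-(y/K)^{p-1}\uparrow1$ for each $y$, monotone convergence sends the right side to $\tfrac p{p-1}\int_0^\infty\psi^p$, which is (\ref{ReverseContHardyIneq}). I expect this part to be routine.

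For part~(ii), with $p\ge2$ an integer, I would instead take $X,Y,X_1,\dots,X_p$ i.i.d.\ uniform on $\{1,\dots,K\}$ and set $\psi(k)=c_k$, so that $F(j)=j/K$ and the left side of (\ref{ReverseIntegerHardy}), after multiplication by $K$, equals $\sum_{j=1}^K(\tfrac1j\sum_{k=1}^jc_k)^p$. Using $P(X_{(p)}=m)=\{m^p-(m-1)^p\}/K^p$ together with $E(F^{-p}(Y){\bf 1}_{[Y\ge m]}\mid X_{(p)}=m)=K^{p-1}\sum_{y=m}^Ky^{-p}$ (by independence of $Y$ and $X_{(p)}$), the right side of (\ref{ReverseIntegerHardy}) times $K$ collapses to $\sum_{m=1}^K c_m^p(m^p-(m-1)^p)\sum_{y=m}^Ky^{-p}$. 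Letting $K\to\infty$ by monotone convergence on both sides, everything being nonnegative, yields
\[
\sum_{n=1}^\infty\Big(\frac1n\sum_{k=1}^nc_k\Big)^p\ \ge\ \sum_{m=1}^\infty c_m^p\,a_m,\qquad a_m=(m^p-(m-1)^p)\sum_{y=m}^\infty y^{-p}.
\]
Since $a_1=\zeta(p)$, it then suffices to establish $a_m\ge\zeta(p)$ for every $m\ge1$, after which $\sum_m c_m^p a_m\ge\zeta(p)\sum_m c_m^p$ gives (\ref{ReverseDiscHardyIneq}).

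The crux, and the step I expect to be the main obstacle, is this last elementary inequality $a_m\ge\zeta(p)=a_1$. I would prove it by showing the sequence $(a_m)$ is nondecreasing; writing $T_m=\sum_{y\ge m}y^{-p}$, a short computation gives $a_{m+1}-a_m=\{(m+1)^p-2m^p+(m-1)^p\}\,T_m-\{(m+1)^p-m^p\}m^{-p}$, where the bracket in the first term is positive by convexity of $t\mapsto t^p$ for $p\ge2$, so the task reduces to a sharp enough lower bound on the tail $T_m$. The naive estimate $T_m\ge m^{1-p}/(p-1)$ suffices for large $m$ but must be refined for small $m$, where the exact value $T_1=\zeta(p)$ carries the argument. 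Equivalently, Abel summation together with the telescoping identity $\sum_{m=1}^N a_m=N^pT_N+(N-1)$ reduces the whole matter to $N^pT_N\ge N(\zeta(p)-1)+1$, whose positive linear growth stems precisely from the strict gap $\zeta(p)-1=\sum_{n\ge2}n^{-p}<1/(p-1)$; retaining this gap rather than bounding it away is the only genuinely delicate point of the proof.
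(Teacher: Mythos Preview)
Your overall strategy matches the paper's proof exactly: specialize Theorem~\ref{ReverseHardyIneq} to uniform distributions on $[0,K]$ (for (i)) and on $\{1,\dots,K\}$ (for (ii)), then let $K\to\infty$.

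For part~(i) your argument is correct, and your direct monotone convergence (using $1-(y/K)^{p-1}\uparrow 1$) is actually a bit cleaner than the paper's route, which inserts an auxiliary $\varepsilon\in(0,1]$, bounds the right side below by $\tfrac p{p-1}(1-\varepsilon^{p-1})\int_0^{\varepsilon K}\psi^p$, and lets $K\to\infty$ then $\varepsilon\downarrow 0$.

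For part~(ii) your reduction is also the paper's: after multiplying~(\ref{ReverseIntegerHardy}) by $K$ and passing to the limit one lands on
\[
\sum_{n=1}^\infty\Bigl(\tfrac1n\sum_{k=1}^n c_k\Bigr)^p\ \ge\ \sum_{m=1}^\infty c_m^p\,a_m,\qquad a_m=(m^p-(m-1)^p)\sum_{y\ge m}y^{-p},
\]
and it remains to show $a_m\ge\zeta(p)$ for every $m\ge 1$. Here the paper does \emph{not} attempt a proof: it simply invokes Lemma~2 of Renaud~\cite{MR854569}. Your attempt to prove this lemma from scratch has a gap. The monotonicity approach is the right idea, but your own computation
\[
a_{m+1}-a_m=\{(m+1)^p-2m^p+(m-1)^p\}\,T_m-\{(m+1)^p-m^p\}\,m^{-p}
\]
shows that the naive bound $T_m\ge m^{1-p}/(p-1)$ is \emph{not} enough: for $p=2$ it would require $2m\ge 2m+1$. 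One needs the sharper estimate $T_m\ge m^{1-p}/(p-1)+\tfrac12 m^{-p}$ (an Euler--Maclaurin type refinement), which for $p=2$ is exactly $T_m\ge 1/m+1/(2m^2)$ and holds with very little room to spare. Your alternative via Abel summation, $\sum_{m=1}^N a_m=N^pT_N+(N-1)$, only yields the averaged statement $\sum_{m=1}^N a_m\ge N\zeta(p)$, which does not imply the pointwise $a_m\ge\zeta(p)$ without already knowing monotonicity. So either cite Renaud (as the paper does) or carry out the monotonicity argument with the sharper tail bound on $T_m$.
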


For further developments concerning reverse Hardy type inequalities, see
\cite{MR2376257}. 

\section{Copson's inequality}\label{sec:Ci}

 \cite{MR1574056} 
presented the following  pair of inequalities:
the {\sl continuous (or integral form) inequality} says,
if $p>1$ and $\psi$ is a nonnegative $p-$integrable function on $(0,\infty)$, then
\begin{equation}\label{ContCopsonIneq}
\int_0^\infty \left ( \int_x^\infty \frac{\psi (y)}{y} dy \right )^p dx
\leq p^p \int_0^{\infty} \psi^p(y) dy
\end{equation}
holds, while the {\sl discrete  (or series form) inequality} says,
if $p>1$ and $a_i$ and $\lambda_i,\, i=1,2,\dots,$ are nonnegative
numbers and $\Lambda_i = \sum_{j=1}^i \lambda_j,\, i=1,2,\dots,$ is positive, then
\begin{equation}\label{DiscCopsonIneq}
\sum_{i=1}^\infty \left[ \sum_{j=i}^\infty a_j \frac {\lambda_j}{\Lambda_j} \right]^p \lambda_i
\leq p^p \sum_{j=1}^\infty a_{j}^p \lambda_{j}
\end{equation}
holds.
We generalize Copson's inequalities as follows.

\begin{Theorem}
{\bf Copson's inequality}
\label{NCopsonIneq}\\
Let $X$ and $Y$ be independent random variables with distribution function
$F$ on $({\mathbb R}, {\cal B})$, and let $\psi $ be a nonnegative
measurable function on $({\mathbb R}, {\cal B})$.
For $p \ge 1$
\begin{equation}
\label{NCopson}
E\left( \left[ E \left( \frac{\psi(Y)}{F(Y)} {\bf 1}_{[Y \geq X]} \mid X \right) \right]^p \right)
\leq p^p E\left( \psi^p(Y) \right)
\end{equation}
holds.
For absolutely continuous distribution functions $F$ the constant $p^p$ is the smallest possible one.
\end{Theorem}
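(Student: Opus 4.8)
The plan is to derive the inequality for an arbitrary distribution function $F$ from the single key estimate
\[
\int_{\mathbb{R}} \Phi^p \, dF \le p \int_{\mathbb{R}} \Phi^{p-1} \psi \, dF,
\qquad
\Phi(x) := \int_{[x,\infty)} \frac{\psi(y)}{F(y)} \, dF(y),
\]
and then to finish with a single application of Hölder's inequality. Since $\Phi(x) = E\left( \psi(Y) F(Y)^{-1} {\bf 1}_{[Y \geq x]} \right)$ is exactly the conditional expectation appearing in (\ref{NCopson}), the left-hand side of (\ref{NCopson}) equals $\int_{\mathbb{R}} \Phi^p \, dF$. Granting the key estimate, Hölder with exponents $p/(p-1)$ and $p$ gives
\[
\int_{\mathbb{R}} \Phi^p \, dF \le p \Big( \int_{\mathbb{R}} \Phi^p \, dF \Big)^{(p-1)/p} \Big( \int_{\mathbb{R}} \psi^p \, dF \Big)^{1/p},
\]
and dividing by the first factor on the right (after a routine truncation of $\psi$ making all integrals finite) yields $\int \Phi^p \, dF \le p^p \int \psi^p \, dF = p^p E\left( \psi^p(Y) \right)$, which is (\ref{NCopson}).

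To prove the key estimate I would use Stieltjes integration by parts. The function $\Phi$ is nonincreasing and left-continuous, with Lebesgue--Stieltjes differential $-d\Phi = (\psi/F)\,dF$, so that $F\,(-d\Phi) = \psi\,dF$. Since $\Phi^p F \to 0$ at both $\pm\infty$, integration by parts gives the identity
\[
\int_{\mathbb{R}} \Phi^p \, dF = \int_{\mathbb{R}} F \, d(-\Phi^p).
\]
The crux is then the inequality $d(-\Phi^p) \le p\,\Phi^{p-1}\, d(-\Phi)$ between measures: on the continuous part of $\Phi$ this is the chain rule, with equality, while at each atom $x$ of $F$ the jump of $-\Phi^p$ equals $\Phi(x)^p - \Phi(x+)^p$, which the elementary convexity inequality $a^p - b^p \le p a^{p-1}(a-b)$ for $0 \le b \le a$ bounds by $p\,\Phi(x)^{p-1}(\Phi(x) - \Phi(x+))$. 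Integrating this measure inequality against $F \ge 0$ and using $F\,(-d\Phi) = \psi\,dF$ produces exactly $\int F\,d(-\Phi^p) \le p \int \Phi^{p-1}\psi\,dF$, completing the key estimate.

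The main obstacle is precisely this passage to a general $F$. Unlike in Hardy's inequality, here $\Phi$ is nonincreasing, so the naive Fubini bound on $\int \Phi^p \, dF$ runs the wrong way and integration by parts is unavoidable. For absolutely continuous $F$ the integration by parts is an exact identity and the argument collapses to the familiar continuous proof underlying (\ref{ContCopsonIneq}); for $F$ with atoms the identity acquires jump contributions, and it is exactly the convexity inequality above that keeps the constant at $p^p$. In this sense the single estimate simultaneously encodes the continuous integration-by-parts proof and the discrete Abel-summation proof, and the case $p=1$ is degenerate, since then $\int \Phi\,dF = \int \psi\,dF = E\left( \psi(Y) \right)$ by Fubini, with constant $1$.

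For the optimality of $p^p$ when $F$ is absolutely continuous, I would exhibit a near-extremal family. Writing $\psi_F(v) = \psi(F^{-1}(v))$ as in Theorem \ref{HardyIneq}, the substitution $v = F(y)$, $u = F(x)$ turns (\ref{NCopson}) into
\[
\int_0^1 \Big( \int_u^1 \frac{\psi_F(v)}{v}\,dv \Big)^p du \le p^p \int_0^1 \psi_F^p(v)\,dv.
\]
Taking $\psi_F(v) = v^{-1/p}\,{\bf 1}_{[\delta,1]}(v)$ and letting $\delta \downarrow 0$, both sides are asymptotically $p^p\,|\log \delta|$, so their ratio tends to $1$; hence no constant smaller than $p^p$ can work, which proves sharpness.
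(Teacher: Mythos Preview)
Your argument is correct and the core mechanism is the same as the paper's: both hinge on the estimate $\int \Phi^p\,dF \le p\int \Phi^{p-1}\psi\,dF$ (this is exactly inequality (C5) in the paper's Lemma~\ref{C1}) followed by H\"older. The route, however, is genuinely different. The paper first proves the purely discrete version (Lemma~\ref{C1}) via Young's inequality and a telescoping sum, and then extends to a general $F$ by a discretization scheme at quantile points $y_{N,i}=F^{-1}(i/N)$ combined with Fatou's lemma and a $1/N$-perturbation in the denominator. You bypass the discretization entirely: the Fubini identity $\int \Phi^p\,dF=\int F\,d(-\Phi^p)$ and the pointwise measure inequality $d(-\Phi^p)\le p\,\Phi^{p-1}\,d(-\Phi)$---with equality on the continuous part by the $C^1$ chain rule, and the convexity bound $a^p-b^p\le p\,a^{p-1}(a-b)$ handling each atom---deliver the key estimate for arbitrary $F$ in one stroke. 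Your approach is shorter and treats the continuous and discrete cases simultaneously; the paper's approach, on the other hand, makes the reduction to the classical discrete Copson lemma explicit, which fits its overarching theme of unifying the two classical forms. A small point: your phrasing ``$\Phi^pF\to 0$ at both $\pm\infty$'' is not actually needed once you recognise the integration-by-parts identity as Tonelli applied to $\Phi^p(x)=\nu_p([x,\infty))$; and ``routine truncation'' should be spelled out as truncating $\psi$ above and on the left so that $\Phi$ is bounded, which legitimises the division step.
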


The strength of this inequality (\ref{NCopson}) lies in the fact that it implies both the continuous
and the discrete version of Copson's inequality.

\begin{Corollary}\label{CopsonCont-Disc-Ineq} \hfill \newline
(i) \ For any $p \ge 1$ and nonnegative $\psi \in L_p$,  inequality (\ref{ContCopsonIneq}) holds.   \\
(ii) \ For any $p \ge 1$ and nonnegative sequences
$\{ a_n \}_{n=1}^\infty, \{ \lambda_n \}_{n=1}^\infty \in \ell_p$ with $\lambda_1 > 0$,
inequality (\ref{DiscCopsonIneq}) holds.
\end{Corollary}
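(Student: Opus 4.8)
The plan is to mimic the derivation of Corollary \ref{ClassicalCont-Disc-Ineq} from Theorem \ref{HardyIneq}: in each case I would choose the common distribution function $F$ so that the inner conditional expectation in (\ref{NCopson}) reproduces exactly the Copson averaging operator appearing on the left of (\ref{ContCopsonIneq}) or (\ref{DiscCopsonIneq}), then rescale and pass to the limit.

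For part (i) I would take $X$ and $Y$ independent and uniform on $[0,K]$, so that $F(y)=y/K$ for $0\le y\le K$, and restrict $\psi$ to $[0,K]$. Then $\psi(Y)/F(Y)=K\psi(Y)/Y$, and by independence
\[
E\left( \frac{\psi(Y)}{F(Y)} \mathbf{1}_{[Y \geq X]} \,\middle|\, X = x \right)
= \frac{1}{K}\int_x^K \frac{K\psi(y)}{y}\, dy = \int_x^K \frac{\psi(y)}{y}\, dy .
\]
Substituting this into (\ref{NCopson}) and using $E(\psi^p(Y))=K^{-1}\int_0^K \psi^p(y)\,dy$, the common factor $K^{-1}$ cancels after multiplying through by $K$, leaving
\[
\int_0^K \left( \int_x^K \frac{\psi(y)}{y}\, dy \right)^p dx \le p^p \int_0^K \psi^p(y)\, dy .
\]
Letting $K \to \infty$ and applying monotone convergence to both sides (for fixed $x$ the integrand $\mathbf{1}_{[0,K]}(x)\,(\int_x^K \psi(y)/y\,dy)^p$ increases to $(\int_x^\infty \psi(y)/y\,dy)^p$) then yields (\ref{ContCopsonIneq}).

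For part (ii) I would take $X$ and $Y$ independent with $P(Y=i)=\lambda_i/\Lambda_K$ for $i=1,\dots,K$, where $\Lambda_K=\sum_{j=1}^K \lambda_j>0$ because $\lambda_1>0$; then $F(i)=\Lambda_i/\Lambda_K$, and $\Lambda_i\ge\lambda_1>0$ ensures every denominator is positive. Choosing $\psi(i)=a_i$ gives $\psi(Y)/F(Y)=a_Y\Lambda_K/\Lambda_Y$, so that
\[
E\left( \frac{\psi(Y)}{F(Y)} \mathbf{1}_{[Y \geq X]} \,\middle|\, X = i \right)
= \sum_{j=i}^K \frac{a_j \Lambda_K}{\Lambda_j}\cdot \frac{\lambda_j}{\Lambda_K}
= \sum_{j=i}^K a_j \frac{\lambda_j}{\Lambda_j}.
\]
Inserting this into (\ref{NCopson}), using $E(\psi^p(Y))=\Lambda_K^{-1}\sum_{i=1}^K a_i^p \lambda_i$, and multiplying through by $\Lambda_K$ produces
\[
\sum_{i=1}^K \left[ \sum_{j=i}^K a_j \frac{\lambda_j}{\Lambda_j} \right]^p \lambda_i \le p^p \sum_{i=1}^K a_i^p \lambda_i .
\]
As $K \to \infty$ each side is a sum of nonnegative terms increasing monotonically to its infinite counterpart, and the limit gives (\ref{DiscCopsonIneq}).

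The only point requiring genuine care—the main obstacle, such as it is—is the passage to the limit, and in particular matching the rescaling factor ($K$ in the continuous case, $\Lambda_K$ in the discrete case) to the total mass that $F$ assigns. Unlike the Hardy corollary, where the uniform law works in both cases, the discrete Copson inequality carries the weights $\lambda_i$, which forces the non-uniform choice $P(Y=i)\propto\lambda_i$; once that choice is made, the monotone convergence argument is routine since every quantity in sight is nonnegative, so no dominated-convergence or integrability subtleties arise.
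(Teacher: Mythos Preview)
Your proof is correct and follows essentially the same route as the paper's: both parts apply Theorem~\ref{NCopsonIneq} with $F$ uniform on $(0,K)$ for (i) and with $P(Y=i)=\lambda_i/\Lambda_K$ for (ii), then rescale and let $K\to\infty$. The only cosmetic difference is that the paper handles the limit in (ii) via the two-step bound $\sum_{i=1}^{K_1}[\sum_{j=i}^{K_2}\cdots]^p\lambda_i\le p^p\sum_{j=1}^{K_2}a_j^p\lambda_j$ with $K_2\to\infty$ first and then $K_1\to\infty$, whereas you invoke monotone convergence directly on the diagonal $K_1=K_2=K$; both are valid since all terms are nonnegative and nondecreasing in $K$.
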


\begin{proof}
By Tonelli's theorem (Fubini) equality holds in (\ref{ContCopsonIneq}) and (\ref{DiscCopsonIneq}) for $p=1$.
Let $p>1$. \\
(i) can be seen by choosing $X$ and $Y$ uniform on $(0,K)$ and taking limits with $K \to \infty$. \\
(ii) needs a longer argument.
For $p>1$ define $\Lambda_i=\sum_{j=1}^i \lambda_j,\, p_i = \lambda_i/\Lambda_K,\, i=1,\dots,K$,
for some natural number $K$ and define the bounded continuous function $\psi$ such that
$\psi(i)=a_i$ holds for $i=1,\dots,K$.
With $F(x) = \sum_{i=1}^{K \wedge \lfloor x \rfloor} p_i$ Theorem \ref{NCopsonIneq} yields
\begin{eqnarray}\label{Copson10}
\lefteqn{ E\left( \left[ E \left( \frac{\psi(Y)}{F(Y)} {\bf 1}_{[Y \geq X]} \mid X \right) \right]^p \right)
= \sum_{i=1}^K \left[ \sum_{j=i}^K \frac{a_j}{\Lambda_j /\Lambda_K} p_j \right]^p p_i } \\
&& = \sum_{i=1}^K  \left[ \sum_{j=i}^K a_j \frac {\lambda_j}{\Lambda_j} \right]^p \frac {\lambda_i}{\Lambda_K}
\leq p^p \sum_{j=1}^K a_j ^p \frac {\lambda_j }{\Lambda_K} =  p^p E\left( \psi^p(Y) \right). \nonumber
\end{eqnarray}
For $K_1 \leq K_2$ this implies
\begin{equation}\label{Copson11}
\sum_{i=1}^{K_1} \left[ \sum_{j=i}^{K_2} a_j \frac {\lambda_j}{\Lambda_j}\right]^p \lambda_i
\leq p^p \sum_{j=1}^{K_2} a_j^p \lambda_j.
\end{equation}
Taking limits here for $K_2 \to \infty$ and subsequently $K_1 \to \infty$ we arrive at (\ref{DiscCopsonIneq}).
\end{proof}

Comparison of the left side of (\ref{NCopson}) with the left side of (\ref{Hardy}) and the definition of
$H_F$ in (\ref{HardyOperatorLeftTail}) leads us to define the Copson (or dual) operator $H_F^*$
as follows:   for $x \in {\mathbb R}$ and $\psi \in L_p (F)$
\begin{eqnarray}
H_F^* \psi (x) \equiv \int_{[x,\infty)} \frac{\psi (y)}{F(y)} d F(y) = \int_{[x,\infty)} \psi (y) d \Lambda (y)
\label{DualHardyOperator}
\end{eqnarray}
where $\Lambda(x) \equiv \int_{[x, \infty )}  dF(y) / F(y)$  is the {\sl reverse (or backward)} hazard function
corresponding to $F$.  (We will introduce and discuss the forward hazard function
$\overline{\Lambda} (x) \equiv  \int_{(-\infty, x]} d F (x) / (1-F(x-))$ in connection with
the inequalities of Carleman, P\'olya, and Knopp in Section~\ref{sec:CarlemanPolyaKnopp}.)

As pointed out by  Hardy in \cite{MR1574154}, the discrete Copson inequality is a ``reciprocal'' or ``dual'' inequality
of the discrete Hardy inequality (\ref{DiscHardyIneq}), in the sense that
one implies the other.
But this holds in other senses as well.
For a treatment of (\ref{ContHardyIneq}) and (\ref{ContCopsonIneq})
based on the duality of $L_p$
and $L_q$ with $1/p + 1/q =1$,
see \cite{MR1681462}, section 6.3, especially his Theorem 6.20 and Corollary 6.2.1.
In particular when viewed as operators on $L_2 (F)$,
$H_F$ and $H_F^*$ are adjoint operators:
for $\psi$ and $\chi$ in $L_2(F)$ we have
\begin{eqnarray}
\lefteqn{ E\left( \left[ \frac{E\left( \psi(Y) {\bf 1}_{[Y \leq X]} \mid X \right)}{F(X)} \right] \chi(X) \right)
            = E\left( \frac{\psi(Y) \chi(X)}{F(X)} {\bf 1}_{[Y \leq X]} \right) \nonumber } \\
&& \hspace{10em} = E\left( \left[ E\left( \frac{\chi(X)}{F(X)} {\bf 1}_{[Y \leq X]} \mid Y \right) \right] \psi(Y) \right).
\end{eqnarray}
So, $H_F$ and $H_F^*$ have the same norms for $p=2$, and indeed
the bounds in (\ref{summaryHardy})
and (\ref{summaryCopson}) are the same for $p=2$.
Applying Hardy's approach we obtain the equivalence of (\ref{Hardy}) and (\ref{NCopson}).

\begin{Theorem}\label{equivalence}
{\bf Equivalence of Hardy's and Copson's inequality} \\
Let $X$ and $Y$ be independent random variables with distribution function $F$ on $({\mathbb R}, {\cal B})$.
For $p>1$ and all nonnegative measurable functions $\psi$ on $({\mathbb R}, {\cal B})$ (\ref{Hardy}) holds if and only if
for $p>1$ and all nonnegative measurable functions $\psi$ on $({\mathbb R}, {\cal B})$ (\ref{NCopson}) holds.
\end{Theorem}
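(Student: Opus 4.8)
The plan is to run Hardy's own duality argument in the probabilistic setting, converting the $L_p$ bound on $H_F$ into an $L_q$ bound on its adjoint $H_F^*$ for conjugate exponents $1/p+1/q=1$. The key identity is the one displayed just before the theorem: writing $\langle g,h\rangle = E(g(X)h(X))$ for the natural pairing with $X\sim F$, that computation shows $\langle H_F\psi,\chi\rangle = \langle\psi,H_F^*\chi\rangle$ for nonnegative $\psi,\chi$, so that $H_F^*$ is the formal adjoint of $H_F$ with respect to $\langle\cdot,\cdot\rangle$. The equivalence asserted here is between the two \emph{families} of inequalities over all $p>1$, not between a fixed pair: duality links exponent $p$ to its conjugate $q$, and indeed the text records that $H_F$ and $H_F^*$ have equal norms only at $p=2$.

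For nonnegative $\psi$ the function $H_F^*\psi$ is again nonnegative, so the variational description of the $L_p(F)$ norm gives
\begin{equation*}
\|H_F^*\psi\|_p = \sup_{\chi\ge 0,\,\|\chi\|_q\le 1}\langle H_F^*\psi,\chi\rangle = \sup_{\chi\ge 0,\,\|\chi\|_q\le 1}\langle\psi,H_F\chi\rangle \le \|\psi\|_p \sup_{\chi\ge 0,\,\|\chi\|_q\le 1}\|H_F\chi\|_q ,
\end{equation*}
where the middle equality is the adjoint identity and the last step is H\"older. Because every test function $\chi$ is nonnegative, Hardy's inequality (\ref{Hardy}) applied at exponent $q$ bounds $\|H_F\chi\|_q$ by the constant $q/(q-1)=p$ times $\|\chi\|_q$, whence $\|H_F^*\psi\|_p\le p\,\|\psi\|_p$, which is exactly (\ref{NCopson}) at exponent $p$. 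As $p$ runs through $(1,\infty)$ so does $q$, so assuming (\ref{Hardy}) for every $p>1$ yields (\ref{NCopson}) for every $p>1$. The reverse implication is obtained by interchanging the roles of $H_F$ and $H_F^*$: for nonnegative $\psi$ one writes $\|H_F\psi\|_p$ as a supremum of $\langle\psi,H_F^*\chi\rangle$ over nonnegative $\chi$ with $\|\chi\|_q\le 1$ and invokes Copson's inequality (\ref{NCopson}) at exponent $q$, whose constant $q=p/(p-1)$ is precisely the Hardy constant at exponent $p$.

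The arithmetic of exponents is clean; the main obstacle is measure-theoretic legitimacy for a completely arbitrary distribution function $F$. One must justify the adjoint identity beyond the nonnegative $L_2$ case used in the excerpt, which Tonelli's theorem supplies since the integrands are nonnegative, and one must check that $H_F^*\psi(x)=\int_{[x,\infty)}\psi(y)\,dF(y)/F(y)$ is finite $F$-almost everywhere even when $F$ has atoms or $F(y)$ is small near the left end of the support. It is also necessary to interpret both inequalities as bounds on an operator norm taking values in $[0,\infty]$, so that the degenerate cases $\|\psi\|_p=\infty$ and an unbounded operator are covered automatically and the suprema above remain meaningful. Once these finiteness and measurability points are settled, the conjugate-exponent identity $q/(q-1)=p$ closes the equivalence in both directions.
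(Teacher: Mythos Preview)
Your argument is correct and is essentially the paper's own proof: both use the adjoint identity $\langle H_F\psi,\chi\rangle=\langle\psi,H_F^*\chi\rangle$ (via Tonelli), apply H\"older, and invoke the other inequality at the conjugate exponent $q=p/(p-1)$, so that the Hardy constant $q/(q-1)=p$ becomes the Copson constant and vice versa. The only cosmetic difference is that you phrase the last step via the variational formula $\|g\|_p=\sup_{\chi\ge0,\,\|\chi\|_q\le1}\langle g,\chi\rangle$, whereas the paper plugs in the explicit extremizer $\eta=(H_F^*\psi)^{p-1}$ (respectively $\eta=(H_F\psi)^{p-1}$) and then divides; your formulation handles the potential case $\|H_F^*\psi\|_p=\infty$ a bit more cleanly than the paper's division step.
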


Although this Theorem~ \ref{equivalence}
(formally) renders one of our proofs of Hardy's and Copson's inequality
superfluous, we have included both proofs in Section \ref{sec:proofs} to illustrate the different methods.

\begin{Remark}\label{suboptimalconstantCopson}
 For $p>1$ there are distributions for which the constant $p^p$ in (\ref{NCopson}) is not optimal.
This is the case for all Bernoulli distributions.
Let $X$ and $Y$ have a Bernoulli distribution with $P(X=1)=q=1-P(X=0)$.
Then with $\psi(0)=a$ and $\psi(1)=b$ the left hand side of our Copson inequality (\ref{NCopson}) equals
\begin{eqnarray}\label{Bernoulli7}
\lefteqn{ (1-q)(a + qb)^p + q (qb)^p = (1-q)(1+q)^p\left(\frac 1{1+q}a + \frac q{1+q}b \right)^p + q^{p+1}b^p \nonumber } \\
&&\leq (1-q)(1+q)^{p-1} \left( a^p + q b^p \right) + q^{p+1}b^p \\
&& = (1+q)^{p-1} \left( (1-q) a^p + q b^p \right) + q^2 \left( q^{p-1} - (1+q)^{p-1} \right) b^p \hspace{5em} \nonumber \\
&& \leq (1+q)^{p-1} \left( (1-q) a^p + q b^p \right), \nonumber
\end{eqnarray}
where the first inequality follows from Jensen's inequality and the convexity of $x \mapsto x^p,\, x \geq 0.$
The right hand side of (\ref{Bernoulli7}) is bounded by
\begin{equation}\label{Bernoulli8}
2^{p-1} \left( (1-q) a^p + q b^p \right) < p^p \left( (1-q) a^p + q b^p \right),
\end{equation}
where the strict inequality holds since $p \mapsto p\log p - (p-1) \log 2$
is strictly increasing on $[1,\infty)$ with value 0 at $p=1$
and where the last expression is the upper bound in (\ref{NCopson}).
\end{Remark}

\par\noindent
\begin{Remark}
Theorem 7 gives a qualitative connection between Hardy's inequality and Copson's
inequality (or the ``dual Hardy inequality"). The papers by
\cite{MR2390520},  
\cite{MR3180926},     
and
\cite{MR4051866}      
quantify these connections.  These results are strongly related to further work on the
connections between the $I-H_F$ and $I - H_F^*$ operators on the one hand, and between
the $I - \overline{H}_F$ and $I - \overline{H}_F^*$ operators on the other hand.
Also see \cite{MR2747011}. 
Recall that
 \begin{eqnarray*}
 H_F \psi (x) & \equiv & \frac{ \int_{(-\infty,x]} \psi (y) dF(y)}{F(x)} , \ \ \
 \overline{H} _F \psi (x)  \equiv  \frac{ \int_{[x,\infty)} \psi (y) dF(y)}{1-F(x-)} , \\
 H_F^* \psi (x) & \equiv &  \int_{[x,\infty)} \frac{\psi (y)}{F(y)} d F(y)    \ \ \  
\ \ \, \overline{H}_F^* \psi (x)\, \equiv  \int_{(-\infty, x]} \frac{\psi (y)}{1-F(y-)} d F(y)  \\
& = & \int_{[x,\infty)} \psi (y) d \Lambda (y),  \qquad \qquad \ \ \ \ \ \  = \int_{(-\infty ,x]} \psi (y) d \overline{\Lambda} (y) ,
\end{eqnarray*}
where
\begin{eqnarray}
\Lambda(x) \equiv \int_{[x,\infty)} \frac{dF(y)}{F(y)} , \qquad \qquad
\overline{ \Lambda}  (x) \equiv \int_{(-\infty, x]} \frac{1}{1-F(y-)} dF(y) .
\label{CumulativeHazardFunctions}
\end{eqnarray}
are  the {\sl backward} cumulative hazard function  and the ({\sl forward}) cumulative hazard
functions of survival analysis.
\end{Remark}

\section{A reverse Copson inequality}
\label{sec:ReverseCopsonIneq}

Reversed versions of the classical Copson inequality are given in Theorems 2 and 4 of Renaud (1986) \cite{MR854569}.
His continuous (or integral form) inequality may be rephrased as follows.
If $p \geq 1$ holds and $\psi$ is a nonnegative $p-$integrable function on $(0,\infty)$ such that $x \mapsto \psi(x)/x$ is nonincreasing, then
\begin{equation}\label{RenaudRevContCopsonIneq}
\int_0^\infty \left ( \int_x^\infty \frac{\psi (y)}{y} dy \right )^p dx \geq \int_0^{\infty} \psi^p(y) dy
\end{equation}
holds.  His discrete form says:  if  $p \geq 1$ holds and $a_1/1 \geq a_2/2 \geq \cdots$ are nonnegative numbers, then
\begin{equation}\label{RenaudRevDiscCopsonIneq}
\sum_{i=1}^\infty \left[ \sum_{j=i}^\infty \frac {a_j}j \right]^p \geq \sum_{i=1}^\infty a_i^p
\end{equation}
holds.

It seems natural to consider a reverse Copson inequality formulated in terms of random variables.
Here is our result in this direction.

\begin{Theorem}
{\bf Reverse Copson inequality}
\label{NewReverseCopsonIneq}\\
Let $X$ and $Y$ be independent random variables both with distribution function $F$ on $({\mathbb R}, {\cal B})$
and let $\psi $ be a nonnegative $p$-integrable function on $({\mathbb R}, {\cal B})$ with $p \in [1, \infty)$.
If the distribution function $F$ is continuous and $x \mapsto \psi(x)/F(x)$ is nonincreasing, then
\begin{equation}\label{NRC0}
E\left( \left[ E \left( \frac{\psi(Y)}{F(Y)} {\bf 1}_{[Y \geq X]} \mid X \right) \right]^p \right)
\geq E\left( \psi^p(Y) \right)
\end{equation}
holds with equality if $\psi = F$ or $p=1$ holds.

If the distribution function $F$ is continuous, $\psi$ is nonincreasing, and $p$ is an integer, then
\begin{equation}\label{NRC1}
E\left( \left[ E \left( \frac{\psi(Y)}{F(Y)} {\bf 1}_{[Y \geq X]} \mid X \right) \right]^p \right)
\geq p!\, E\left( \psi^p(Y) \right)
\end{equation}
holds with equality if $\psi$ is constant, F is degenerate, or $p=1$ holds.

If the distribution function $F$ is arbitrary, $\psi$ is nonincreasing, and $p$ is an integer, then
\begin{equation}\label{NRC2}
E\left( \left[ E \left( \frac{\psi(Y)}{F(Y)} {\bf 1}_{[Y \geq X]} \mid X \right) \right]^p \right)
\geq E\left( \psi^p(Y) \right)
\end{equation}
holds with equality if $\psi$ equals $0$, or $F$ is degenerate, 
or $p=1$ holds.
\end{Theorem}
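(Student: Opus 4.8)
The plan is to read the left-hand side of each of (\ref{NRC0})--(\ref{NRC2}) as $E\bigl((H_F^*\psi(X))^p\bigr)$, with $H_F^*$ the Copson operator of (\ref{DualHardyOperator}), and then to split into two regimes: the continuous real-$p$ case (\ref{NRC0}), handled by an analytic reduction, and the integer-$p$ cases (\ref{NRC1})--(\ref{NRC2}), handled by one symmetrization-and-telescoping argument.

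For (\ref{NRC0}) I would first pass to the uniform representation. When $F$ is continuous, $U=F(X)$ and $V=F(Y)$ are independent and uniform on $(0,1)$, and writing $g(v)=\psi(F^{-1}(v))$ one has
\[
H_F^*\psi(F^{-1}(u)) = \int_u^1 \frac{g(v)}{v}\,dv =: G(u),
\]
so that (\ref{NRC0}) becomes $\int_0^1 G(u)^p\,du \ge \int_0^1 g(v)^p\,dv$. The hypothesis that $x\mapsto\psi(x)/F(x)$ be nonincreasing is exactly the statement that $v\mapsto g(v)/v$ is nonincreasing. I would then extend $g$ by zero to $(0,\infty)$, which keeps $v\mapsto g(v)/v$ nonincreasing since it merely drops to $0$ at $v=1$, and invoke the continuous reverse Copson inequality (\ref{RenaudRevContCopsonIneq}); the extended outer integrand vanishes for $x\ge 1$, so this returns precisely $\int_0^1 G^p\ge\int_0^1 g^p$. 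Equality for $g(v)=v$, i.e.\ $\psi=F$, is the sharpness case of (\ref{RenaudRevContCopsonIneq}), and $p=1$ gives equality by Tonelli. The engine behind this step is the identity $\int_0^1 G^p\,du = p\int_0^1 G^{p-1}g\,du$ obtained by integration by parts.

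For the integer-$p$ cases I would introduce i.i.d.\ copies $Y_1,\dots,Y_p$ of $Y$, independent of $X$, and expand
\[
E\bigl((H_F^*\psi(X))^p\bigr)
= E\left(\prod_{j=1}^p \frac{\psi(Y_j)}{F(Y_j)}\,{\bf 1}_{[Y_j\ge X]}\right)
= E\left(\frac{\prod_{j=1}^p \psi(Y_j)}{\prod_{j=1}^p F(Y_j)}\,F\bigl(\min_j Y_j\bigr)\right),
\]
using $P(X\le \min_j Y_j\mid Y_1,\dots,Y_p)=F(\min_j Y_j)$. The factor $F(\min_j Y_j)$ cancels the denominator term coming from the smallest observation. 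On the ordered region $y_1\le\cdots\le y_p$ I would then integrate out the variables from the smallest upward: at each stage $\int_{(-\infty,t]}\psi^k\,dF\ge \psi^k(t)\,F(t)$ because $\psi$ (hence $\psi^k$) is nonincreasing, and the resulting $F(t)$ cancels the next $1/F(t)$ while raising the exponent from $k$ to $k+1$. After $p-1$ steps this telescopes down to $\int\psi^p\,dF=E(\psi^p(Y))$. For (\ref{NRC2}) with general $F$, restricting the nonnegative integrand to a single ordered region already yields $E\bigl((H_F^*\psi(X))^p\bigr)\ge E(\psi^p(Y))$, the constant $1$. For (\ref{NRC1}) with continuous $F$ there are no ties, the integrand is symmetric, so the full integral equals $p!$ times the ordered one, and the same telescoping bound on the ordered integral gives the sharp constant $p!$.

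The hardest part will be (\ref{NRC0}) for arbitrary real $p\ge 1$, where the moment expansion is unavailable and one must fall back on the analytic inequality (\ref{RenaudRevContCopsonIneq}), equivalently on closing the gap between $p\int_0^1 G^{p-1}g\,du$ and $\int_0^1 g^p\,du$ using only that $g(v)/v$ is nonincreasing; the telescoping argument for integer $p$, by contrast, disposes of (\ref{NRC1}) and (\ref{NRC2}) at once. A secondary point requiring care is the bookkeeping at atoms of $F$ in (\ref{NRC2}): one must check that $P(X\le\min_j Y_j\mid Y)=F(\min_j Y_j)$ with the right (right-continuous) convention, and note that restricting to $\{y_1\le\cdots\le y_p\}$ as a lower bound---rather than symmetrizing---is precisely what degrades the constant from $p!$ to $1$, consistent with the sharp discrete reverse Copson inequality (\ref{RenaudRevDiscCopsonIneq}).
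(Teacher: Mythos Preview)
Your argument for the integer-$p$ cases (\ref{NRC1}) and (\ref{NRC2}) is essentially the paper's: introduce i.i.d.\ copies $Y_1,\dots,Y_p$, expand the $p$-th power as a product, integrate out $X$ to cancel one factor $1/F(\,\cdot\,)$, and then telescope on the ordered region using the monotonicity of $\psi$. The paper bounds $\prod_j\psi(Y_j)\ge\psi^p(Y_p)$ in one stroke and then cancels the $1/F(Y_j)$ successively via $P(Y_{j-1}\le Y_j\mid Y_j)=F(Y_j)$, while you interleave these steps (using $\int_{(-\infty,t]}\psi^k\,dF\ge\psi^k(t)F(t)$ at each stage); the two bookkeepings are equivalent. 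Your handling of the $p!$ versus $1$ constants and of the atom issue in (\ref{NRC2}) matches the paper.

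For (\ref{NRC0}) you take a genuinely different route: you push everything to the uniform case via $U=F(X)$ and then quote Renaud's continuous reverse Copson inequality (\ref{RenaudRevContCopsonIneq}) as a black box. This is logically sound --- (\ref{RenaudRevContCopsonIneq}) is established in \cite{MR854569} --- but it inverts the paper's intended flow, since Corollary~\ref{ReverseCopson-continuous-discrete}(i) derives (\ref{RenaudRevContCopsonIneq}) from (\ref{NRC0}). The paper instead gives a direct, self-contained argument: from the monotonicity of $\psi/F$ it obtains $\int_{[x,y)}(\psi/F)\,dF\ge(\psi(y)/F(y))[F(y-)-F(x-)]$, raises to the $(p-1)$-th power, integrates, and bounds the other direction by the elementary fact $\int p\,G_x(y-)^{p-1}\,dG_x(y)\le 1$ for any distribution function $G_x$. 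That direct route actually yields a slightly stronger statement (the paper's display (\ref{NRC7})) valid for arbitrary $F$, with (\ref{NRC0}) the continuous specialization; your uniform reduction cannot recover this generality. Conversely, your approach makes the equivalence between (\ref{NRC0}) and Renaud's inequality transparent and is shorter if one is willing to import (\ref{RenaudRevContCopsonIneq}).
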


We conjecture that (\ref{NRC1}), with $p!\,$ replaced by $\Gamma(p+1)$, and (\ref{NRC2})
hold for all $p \geq 1$, but we have no proof.
Note that for $F$ continuous (\ref{NRC2}) with $p \in [1,\infty)$ follows from (\ref{NRC0}).
For the situations of the continuous and discrete versions of the original Copson inequality
our reverse Copson inequality implies:

\begin{Corollary}
\label{ReverseCopson-continuous-discrete}
$\phantom{bb}$\\
(i) With $p \in [1,\infty)$ and $\psi$ nonnegative $p-$integrable on $(0,\infty)$ such that $x \mapsto \psi(x)/x$
is nonincreasing (\ref{RenaudRevContCopsonIneq}) holds. \\
(ii)   If $p \geq 1$ is an integer and $\psi$ is a nonnegative, nonincreasing, $p-$integrable function on $(0,\infty)$, then
\begin{equation}\label{RevContCopsonIneq}
\int_0^\infty \left ( \int_x^\infty \frac{\psi (y)}{y} dy \right )^p dx
\geq p! \int_0^{\infty} \psi^p(y) dy
\end{equation}
holds.  \\
(iii)  If $p \geq 1$ is an integer and $a_1 \geq a_2 \geq \cdots$ and $\lambda_i,\, i=1,2,\dots,$
are nonnegative numbers and $\Lambda_i = \sum_{j=1}^i \lambda_j,\, i=1,2,\dots,$ is positive, then
\begin{equation}\label{RevDiscCopsonIneq}
\sum_{i=1}^\infty \left[ \sum_{j=i}^\infty a_j \frac {\lambda_j}{\Lambda_j}\right]^p \lambda_i
\geq \sum_{j=1}^\infty a_j^p \lambda_j
\end{equation}
holds.
\end{Corollary}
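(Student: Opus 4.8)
The plan is to derive all three parts from the Reverse Copson inequality (Theorem~\ref{NewReverseCopsonIneq}) by exactly the device used to pass from Theorem~\ref{NCopsonIneq} to Corollary~\ref{CopsonCont-Disc-Ineq}: let $X$ and $Y$ carry a specific (uniform or discrete) distribution function $F$, identify the resulting probabilistic expressions with the analytic ones, and then let a truncation parameter tend to infinity via monotone convergence. The only structural difference from the forward case is that the inequalities now run in the $\ge$ direction, which, as explained below, actually simplifies the limiting step.

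For part (i) I would take $X$ and $Y$ uniform on $(0,K)$, so $F(x)=x/K$ on $(0,K)$. Then $\psi(x)/F(x)=K\psi(x)/x$, so the hypothesis that $x\mapsto\psi(x)/x$ is nonincreasing is precisely the monotonicity of $x\mapsto\psi(x)/F(x)$ required by (\ref{NRC0}). A direct computation gives
\begin{equation*}
E\left(\frac{\psi(Y)}{F(Y)}{\bf 1}_{[Y\ge X]}\,\middle\vert\, X=x\right)=\int_x^K\frac{\psi(y)}{y}\,dy,
\end{equation*}
so that, after multiplying (\ref{NRC0}) by $K$, the inequality becomes
\begin{equation*}
\int_0^K\left(\int_x^K\frac{\psi(y)}{y}\,dy\right)^p dx\ge\int_0^K\psi^p(y)\,dy .
\end{equation*}
Letting $K\to\infty$ and invoking monotone convergence (all integrands are nonnegative and the inner integrals increase in $K$) yields (\ref{RenaudRevContCopsonIneq}). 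Part (ii) is identical except that I would invoke (\ref{NRC1}) in place of (\ref{NRC0}): the uniform $F$ is continuous, $\psi$ is nonincreasing, and $p$ is an integer, so (\ref{NRC1}) applies and supplies the factor $p!$, giving (\ref{RevContCopsonIneq}) in the limit.

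For the discrete part (iii) I would reuse the construction from the proof of Corollary~\ref{CopsonCont-Disc-Ineq}(ii): set $\Lambda_i=\sum_{j=1}^i\lambda_j$ and $p_i=\lambda_i/\Lambda_K$, let $F(x)=\sum_{i=1}^{K\wedge\lfloor x\rfloor}p_i$, and take $\psi$ to be a nonincreasing step function with $\psi(i)=a_i$ (possible since $a_1\ge a_2\ge\cdots$). Now $F$ is an arbitrary (discrete) distribution function, $\psi$ is nonincreasing, and $p$ is an integer, so (\ref{NRC2}) applies; the same identification as in (\ref{Copson10}) turns it, after multiplication by $\Lambda_K$, into
\begin{equation*}
\sum_{i=1}^K\left(\sum_{j=i}^K a_j\frac{\lambda_j}{\Lambda_j}\right)^p\lambda_i\ge\sum_{j=1}^K a_j^p\lambda_j .
\end{equation*}
Both sides are nondecreasing in $K$, so I would simply let $K\to\infty$ and apply monotone convergence once to reach (\ref{RevDiscCopsonIneq}).

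I expect the main (and essentially only) obstacle to be the bookkeeping in this last limit, namely checking that the truncated left-hand side increases to the full double series $\sum_{i=1}^\infty(\sum_{j=i}^\infty a_j\lambda_j/\Lambda_j)^p\lambda_i$. This holds because for each fixed $i$ the inner sum $\sum_{j=i}^K a_j\lambda_j/\Lambda_j$ increases to $\sum_{j=i}^\infty a_j\lambda_j/\Lambda_j$, so the summand $(\,\cdot\,)^p\lambda_i\,{\bf 1}_{[i\le K]}$ is nondecreasing in $K$ for every $i$, and monotone convergence on the counting measure applies. Because the inequalities here point in the $\ge$ direction, a single monotone passage to the limit suffices; the two-stage limit ($K_2\to\infty$ followed by $K_1\to\infty$) forced on the forward Copson inequality in (\ref{Copson11}) is not needed.
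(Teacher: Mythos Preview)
Your proposal is correct and is precisely the argument the paper has in mind: the paper omits the proof, saying only that it ``is almost the same as the proof of Corollary~\ref{CopsonCont-Disc-Ineq} \ldots\ (but with the inequality signs reversed and the constants changed)'', and your write-up carries this out in detail, invoking (\ref{NRC0}), (\ref{NRC1}), and (\ref{NRC2}) in place of (\ref{NCopson}). Your closing remark that the $\ge$ direction permits a single monotone limit in $K$ (rather than the two-stage $K_2\to\infty$, $K_1\to\infty$ passage used in (\ref{Copson11})) is a valid simplification.
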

The proof of this corollary is almost the same as the proof of
Corollary \ref{CopsonCont-Disc-Ineq} in Section \ref{sec:Ci}
(but with
the inequality signs reversed and the constants changed), and therefore it is omitted.

\begin{Remark}\label{NRC5}
Without continuity of $F$ inequality (\ref{NRC0}) is not generally valid.
Again a counterexample is provided by the Bernoulli distribution.
Take $\psi = F$ and $F(x) = (1-q) {\bf 1}_{[x \geq 0]} + q {\bf 1}_{[x \geq 1]}$.
Now, as a function of the success probability $q$ the left minus the right hand side of (\ref{NRC0}) equals
\begin{equation}\label{NRC6}
E\left([1-F(X-)]^p \right) - E \left([F(X)]^p \right) = (1-2q) \left[1-(1-q)^p \right],
\end{equation}
which is negative for $1/2 < q \leq 1$.
\end{Remark}

\section{The Carleman and P\'olya - Knopp inequalities}
\label{sec:CarlemanPolyaKnopp}
Another classical pair of inequalities in this family of inequalities are those associated with the
names of P\'olya and Knopp in the continuous (or integral) case, and Carleman in the discrete case:
for a positive function $\psi $ in $L_1 ({\mathbb R}^+ , \lambda)$,
\begin{eqnarray}
\int_0^\infty \exp \left ( \frac{1}{x} \int_0^x \log \psi (y) dy \right ) dx \le e \cdot \int_0^{\infty} \psi (y) dy
\label{ContinuousCarlemanPolya-Knopp}
\end{eqnarray}
and, for a sequence of constants $\{ c_k \}$,
\begin{eqnarray}
\sum_{k=1}^\infty \left ( \prod_{j=1}^k c_j \right )^{1/k}  \le   e \cdot \sum_{j=1}^\infty  c_j ;
\label{DiscreteCarlemanPolya-Knopp}
\end{eqnarray}
see e.g.
\cite{MR2256532}   
section 9,
\cite{MR2148234}   
and
\cite{MR1820809}.   
By now the reader will anticipate our impulse to reformulate and unify these two inequalities
in a more probabilistic vein involving random variables and distribution functions as follows:

\begin{Theorem} \label{CarlesonPolya-Knopp}
Let $\psi$ be a positive valued function on ${\mathbb R}$ and let $X,Y $ be independent random
variables with distribution function $F$.   If $\psi \in L_1 (F)$ then
\begin{eqnarray*}
E \left \{ \exp \left (   \frac{ E \left (  1_{[Y \le X ]} \log \psi (Y) | X \right ) }{F(X)} \right ) \right \}
\le e \cdot E \psi (Y) .
\end{eqnarray*}
\end{Theorem}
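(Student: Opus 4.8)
The plan is to obtain the stated inequality as the $p\to\infty$ limit of Hardy's inequality (\ref{Hardy}), exploiting the elementary fact that a power mean converges to the corresponding geometric mean as the exponent tends to zero. Throughout I write $H_F\phi(x)=E(\phi(Y)\mid Y\le x)=E(\phi(Y){\bf 1}_{[Y\le X]}\mid X)/F(X)$ as in (\ref{HardyOperatorLeftTail}).

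First I would fix $p>1$ and apply Theorem \ref{HardyIneq} to the nonnegative measurable function $\psi^{1/p}$ in place of $\psi$. Since $(\psi^{1/p})^p=\psi$, inequality (\ref{Hardy}) reads
$$E\left(\left[E\left(\psi^{1/p}(Y)\mid Y\le X\right)\right]^p\right)\le\left(\frac p{p-1}\right)^p E\left(\psi(Y)\right).$$
The right-hand side is finite because $\psi\in L_1(F)$, and $(p/(p-1))^p\to e$ as $p\to\infty$. It remains to identify the pointwise limit of the integrand on the left. For $x$ with $F(x)>0$ let $\mu_x$ denote the conditional law of $Y$ given $[Y\le x]$, i.e.\ the probability measure with distribution function $F(\cdot)/F(x)$ supported on $(-\infty,x]$; then $E(\psi^{1/p}(Y)\mid Y\le x)=\int\psi^{1/p}\,d\mu_x$ and
$$\left[E\left(\psi^{1/p}(Y)\mid Y\le x\right)\right]^p=\left(\int\psi^{1/p}\,d\mu_x\right)^p,$$
which is precisely the order-$1/p$ power mean of $\psi$ under $\mu_x$. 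Because $\log^+\psi\le\psi$ is $\mu_x$-integrable, the geometric mean $\exp(\int\log\psi\,d\mu_x)\in[0,\infty)$ is well defined, and the power-mean inequality shows that the displayed quantity is nonincreasing in $p$ and decreases to this geometric mean as $p\to\infty$. Evaluating at $x=X$ (the exceptional set $\{F(X)=0\}$ has probability zero), the integrand converges pointwise to $\exp\!\big(E({\bf 1}_{[Y\le X]}\log\psi(Y)\mid X)/F(X)\big)$.

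Finally, since the integrands are nonnegative and converge pointwise, Fatou's lemma combined with the per-$p$ bound above yields
$$E\left\{\exp\left(\frac{E\left({\bf 1}_{[Y\le X]}\log\psi(Y)\mid X\right)}{F(X)}\right)\right\}\le\liminf_{p\to\infty}E\left(\left[E\left(\psi^{1/p}(Y)\mid Y\le X\right)\right]^p\right)\le e\cdot E\psi(Y),$$
which is the claim. The main obstacle is the pointwise convergence step: one must carefully justify that the order-$1/p$ power mean decreases to the geometric mean under each conditional law $\mu_x$, handling the integrability of $\log^+\psi$ (controlled by $\psi\in L_1(F)$) and the degenerate case $\int\log\psi\,d\mu_x=-\infty$, in which the limit is $0$. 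Once this convergence is established, Fatou's lemma effects the passage to the limit without requiring a dominating function, and no interchange beyond a $\liminf$ inequality is needed.
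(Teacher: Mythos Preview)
Your proposal is correct and follows the same approach as the paper: apply Hardy's inequality (\ref{Hardy}) to $\psi^{1/p}$ and let $p\to\infty$, identifying the pointwise limit of the integrand and then passing to the limit in the outer expectation. The only minor difference is that you justify the pointwise convergence via monotonicity of power means towards the geometric mean, whereas the paper rewrites the logarithm of the integrand as a difference quotient in $\alpha=1/p$ and appeals to dominated convergence; your explicit invocation of Fatou's lemma for the outer expectation is a clean way to finish.
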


\begin{Corollary}\label{Carleman-Polya-KnoppCor} \hfill \newline
(i) \ For any  nonnegative $\psi \in L_1$,  inequality (\ref{ContinuousCarlemanPolya-Knopp})  holds.\\
(ii) \ For any positive sequence $\{ c_k \} \in \ell_1$ the inequality (\ref{DiscreteCarlemanPolya-Knopp})
holds
\end{Corollary}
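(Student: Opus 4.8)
The plan is to follow the same recipe used in the proofs of Corollaries~\ref{ClassicalCont-Disc-Ineq} and~\ref{CopsonCont-Disc-Ineq}: specialize Theorem~\ref{CarlesonPolya-Knopp} to the uniform distribution on $(0,K)$ for part (i) and on $\{1,\dots,K\}$ for part (ii), rescale by the factor $K$, and then let $K\to\infty$.

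For part (i), first I would take $X$ and $Y$ uniform on $(0,K)$, so that $F(x)=x/K$ for $0\le x\le K$. Then the inner conditional expectation becomes $E({\bf 1}_{[Y\le X]}\log\psi(Y)\mid X)=\frac1K\int_0^X\log\psi(y)\,dy$, and dividing by $F(X)=X/K$ yields $\frac1X\int_0^X\log\psi(y)\,dy$. Taking the outer expectation over the uniform $X$ turns the left-hand side of Theorem~\ref{CarlesonPolya-Knopp} into $\frac1K\int_0^K\exp(\frac1x\int_0^x\log\psi(y)\,dy)\,dx$, while the right-hand side becomes $e\cdot\frac1K\int_0^K\psi(y)\,dy$. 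Multiplying through by $K$ and letting $K\to\infty$ (monotone convergence, since both integrands are nonnegative and the domains increase) gives (\ref{ContinuousCarlemanPolya-Knopp}).

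For part (ii), I would instead take $X$ and $Y$ uniform on $\{1,\dots,K\}$ and set $\psi(j)=c_j$. Now $F(i)=i/K$ and $E({\bf 1}_{[Y\le X]}\log\psi(Y)\mid X=i)=\frac1K\sum_{j=1}^i\log c_j$, so the exponent equals $\frac1i\sum_{j=1}^i\log c_j$ and its exponential is the geometric mean $(\prod_{j=1}^i c_j)^{1/i}$. The left-hand side of the theorem becomes $\frac1K\sum_{i=1}^K(\prod_{j=1}^i c_j)^{1/i}$ and the right-hand side $e\cdot\frac1K\sum_{j=1}^K c_j$; multiplying by $K$ and letting $K\to\infty$ yields (\ref{DiscreteCarlemanPolya-Knopp}).

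The one genuine technical point, and the step I expect to need the most care, is the passage from the strictly positive $\psi$ required by Theorem~\ref{CarlesonPolya-Knopp} to the merely nonnegative $\psi$ (or sequence $\{c_k\}$) permitted in the Corollary, since $\log\psi$ may take the value $-\infty$ on the zero set. I would handle this by applying the theorem to $\psi\vee\varepsilon$, noting that wherever $\psi$ vanishes the corresponding exponential (respectively geometric-mean) factor tends to $0$, and then letting $\varepsilon\downarrow0$ by monotone convergence in the already-established finite-$K$ inequality; the right-hand sides converge to the stated integral and sum because $\psi\in L_1$.
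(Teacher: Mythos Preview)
Your proposal is correct and follows exactly the approach indicated in the paper, which simply remarks that the proof of Corollary~\ref{ClassicalCont-Disc-Ineq} applies here as well: specialize Theorem~\ref{CarlesonPolya-Knopp} to the uniform law on $(0,K)$ or on $\{1,\dots,K\}$, multiply by $K$, and pass to the limit.  Your additional paragraph handling the discrepancy between the \emph{positive} $\psi$ required in Theorem~\ref{CarlesonPolya-Knopp} and the merely \emph{nonnegative} $\psi$ allowed in part~(i) via the approximation $\psi\vee\varepsilon$ is a welcome detail that the paper leaves implicit; just note that as $\varepsilon\downarrow 0$ the left-hand integrand is monotone \emph{decreasing}, so the desired inequality follows already from the trivial bound $\exp\bigl(\tfrac1x\int_0^x\log\psi\bigr)\le\exp\bigl(\tfrac1x\int_0^x\log(\psi\vee\varepsilon)\bigr)$ together with the convergence of the right-hand side.
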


The proof of Corollary \ref{ClassicalCont-Disc-Ineq} is applicable to Corollary \ref{Carleman-Polya-KnoppCor} as well.
\medskip

 \cite{MR1920123}    
rewrite the classical integral version
of the Carleman inequality as follows:
replacing $\psi(y)$ in (\ref{ContinuousCarlemanPolya-Knopp}) by $\psi (y) /y$ yields
\begin{eqnarray}
\int_0^{\infty} \exp \left ( \frac{1}{x} \int_0^x \log \psi (y) dy \right ) \frac{dx}{x}   <  \int_0^\infty \psi (x) \frac{dx}{x}.
\label{KPOrewriteOfCarlemanIneq}
\end{eqnarray}
This follows by elementary manipulations together with the identity $\int_0^x \log y dy = x (\log x -1)$.
\cite{MR1920123} give an alternative proof of (\ref{ContinuousCarlemanPolya-Knopp})
by proving (\ref{KPOrewriteOfCarlemanIneq})
via the following simple convexity argument.
By convexity of {\rm exp},  it follows from Jensen's inequality followed by Fubini's theorem that
\begin{eqnarray*}
\int_0^{\infty} \exp \left ( \frac{1}{x} \int_0^x \log \psi (y) dy \right ) \frac{dx}{x}
&\le & \int_0^{\infty}  \frac{1}{x^2} \left \{  \int_0^x \psi (y) dy \right \} dx  \\
& = & \int_0^\infty \psi (y) \left \{ \int_y^\infty \frac{1}{x^2}  dx \right \}  dy = \int_0^{\infty} \psi (y) \frac{dy}{y} .
\end{eqnarray*}
Strict inequality follows because equality in Jensen's inequality almost everywhere forces $\psi$
to be constant a.e., but this contradicts finiteness of $\int_0^\infty \psi (y)/y \ dy$.

Now several questions arise:  is there a corresponding rewrite of our probabilistic version of the inequalities
of Carleman and P\'olya - Knopp?   The answer is clearly ``yes'' for continuous distribution functions $F$.
Replacing $\psi $ by $\psi /F$ in (\ref{CarlesonPolya-Knopp}) and arguing as above, but using the identity
$\int_{(-\infty,x]} \log F(y) dF(y) = F(x) ( \log F(x) - 1)$, yields
\begin{eqnarray*}
\int_{{\mathbb R}} \exp \left ( \frac{1}{F(x)}  \int_{(-\infty ,x]}
\log \psi(y)  d F(y) \right )  \frac{d F(x)}{F(x) } 
& < & \int_{{\mathbb R}} \psi (y) \frac{dF(y)}{F(y)} \\
& = & \int_{{\mathbb R}} \psi (y) d (- \Lambda (y))
\end{eqnarray*}
where  $\Lambda (x) \equiv   \int_{[x,\infty)} dF(y)/F(y)$.
This is a ``left tail inequality'' with motivations from survival analysis.

For the corresponding ``right tail inequality'' we instead replace $\psi$ by $\psi /(1-F)$.
Then reasoning as above yields, for continuous $F$,
\begin{eqnarray*}
\int_{{\mathbb R}} \exp \left ( \frac{1}{1- F(x-)} \int_{[x,\infty)}  \log \psi (y) d F(y) \right )d\overline{\Lambda} (x)
\le \int_{{\mathbb R}} \psi (y) d \overline{\Lambda} (y)
\end{eqnarray*}
where
$\overline{\Lambda} (x) \equiv \int_{(-\infty, x]}  dF(y)/(1-F(y-))$.\\
{\bf Note:}  This notation goes against the classical notation of survival analysis but is in
keeping with the current notation of our paper.   The usual notation for the
``right side'' or forward cumulative hazard function is simply
$\Lambda (x) = \int_{(-\infty,x]}  d F(y) / (1-F(y-))$.

\section{Martingale connections and the $H$ operators}
\label{sec:MartingalesAndHoperators}

In this section we expand on the comments in Sections \ref{sec:Mr}, \ref{sec:Ci}, and  \ref{sec:CarlemanPolyaKnopp} concerning
martingales, counting processes, and the residual life and dual Hardy operators.

First recall the operators $H_F$,  $\overline{H}_F$, $H_F^*$ and $\overline{H}_F^*$  introduced in Section 5.
With $I$ the identity operator and $F$ the {\em continuous} distribution function of $X$,   Fubini's theorem yields
\begin{equation}\label{MC1}
(I-H_F)(I-H_F^*) \psi = \psi, \quad (I-H_F^*)(I-H_F) \psi = \psi - E\psi(X).
\end{equation}
We will also need the classical Hardy operators $H$ and $H^*$ defined by
\begin{eqnarray*}
H \psi (x) \equiv \frac{1}{x} \int_0^x \psi (y) dy, \ \ \ \mbox{and}  \ \ \ H^* \psi (x) \equiv \int_x^\infty \frac{\psi (y)}{y}  dy .
\end{eqnarray*}
for $\psi \in L_p ({\mathbb R}_+ , \lambda)$ where $\lambda$ denotes Lebesgue measure.
 \cite{MR1676324}  
(see also  \cite{MR2390520}),
showed that
\begin{eqnarray}
(H-I)^{-1} \psi (x) = H^* \psi (x) - \psi (x) = \int_x^{\infty} \frac{\psi(y)}{y} dy  - \psi (x) .
\label{ClassicalInverseOperator}
\end{eqnarray}
It is well known (see e.g.
\cite{MR187085})
that $I-H$ is an isometry on $L_2 ({\mathbb R}_+)$.

\cite{MR999013} 
showed that $R\equiv I - H_F$  is an isometry of $L_2 ({\mathbb R}_+ , F)$;
see also
\cite{MR1623559}   
Appendix A.1, pages 420 - 424.
These authors also
showed that with $R \equiv I - \overline{H}_F$ and
$L\equiv I - \overline{H}_F^* $ we have
\begin{eqnarray*}
R \circ L \psi = \psi  \ \ \mbox{and } \ \
L \circ R \psi = \psi - E_F \psi (X)
\end{eqnarray*}
for $\psi \in L_2 (F)$.
Thus $R^{-1} = L$ on $L_2^0 (F) \equiv \{ \psi \in L_2 (F) : \ E_F \psi (X) = 0 \}$,
and we see that the analogue of the identity (\ref{ClassicalInverseOperator}) becomes
\begin{eqnarray}
L \psi (x)
& = & R^{-1} \psi (x) = (\overline{H}_F - I)^{-1} \psi (x)   \nonumber \\
      & = & \overline{H}_F^* \psi (x) - \psi (x)  \nonumber  \\
      & = & - \left ( \psi (x) - \int_{(-\infty,x]} \psi (y) d \overline{\Lambda} (y) \right )
\label{L-Operator-Identity}
\end{eqnarray}
where $\overline{\Lambda}$ is as defined in (\ref{CumulativeHazardFunctions}).

To see that this is fundamentally linked to counting process martingales,
let $X$ have distribution function $F$ on ${\mathbb R}_+$, and define a one-jump
counting process $\{ {\mathbb N}(t) : \ t \ge 0 \}$ by
\begin{eqnarray*}
{\mathbb N}(t) = 1_{[X \le t]} .
\end{eqnarray*}
This process is (trivially) seen to be nondecreasing in $t$ with probability $1$,
and hence is a sub-martingale (a process increasing in conditional mean).
By the Doob-Meyer decomposition theorem there is an increasing predictable
process $\{ {\mathbb A} (t) : \ t \ge 0 \}$ such that
\begin{eqnarray*}
 {\mathbb N} (t) = {\mathbb M} (t) + {\mathbb A} (t)
 \end{eqnarray*}
 where  $\{ {\mathbb M} (t) : \  t \ge 0 \} $ is a mean$-0$ martingale.  In fact for this simple
 counting process it is well-known that
 \begin{eqnarray*}
 {\mathbb A} (t) = \int_{[0,t]} 1_{[X \ge s]} d \overline{\Lambda} (s)
 \end{eqnarray*}
 (see e.g. Appendix B of
 \cite{MR3396731},
 or Chapter 18 of
 \cite{MR0488267}),  %
  and hence we see that
 \begin{eqnarray*}
 {\mathbb M} (t) =  {\mathbb N} (t) - \int_{[0,t]} 1_{[X \ge s]} d \overline{\Lambda} (s)  .
 \end{eqnarray*}
 Comparing this with the identity (\ref{L-Operator-Identity}) rewritten for a distribution function $F$
 on ${\mathbb R}_+$ we see that with $\psi_t  (x) = 1_{[x\le t]} $ and evaluating the resulting identity
 at $x = X $ we get
\begin{eqnarray*}
L \psi_t (X) =  1_{[X \le t]} - \int_0^t 1_{[X \ge y]} d \overline{\Lambda} (y) = {\mathbb M} (t)
\end{eqnarray*}
where $\overline{\Lambda} (x)   \equiv   \int_{[0,x]}  (1-F(y-) )^{-1} d F(y) $ is the cumulative
hazard function corresponding to $F$ on ${\mathbb R}_+$.

But there are still more martingales in this setting which can be represented in terms of the
martingale ${\mathbb M}$ by bringing in the residual life operator $R = I - \overline{H}_F$.
Consider the increasing family of
$\sigma-$fields $\{ {\cal F}_t : \ t \ge 0 \}$ given by
${\cal F}_t \equiv \sigma \{ 1_{[X\le s]} : \ 0 \le s \le t \}$.
Now let $\psi \in L_2^0 (F)$ and consider the process
$$
{\mathbb Y} (t) \equiv  E \{ \psi (X) | {\cal F}_t \},  \qquad t \ge 0 .
$$
Since the $\sigma-$fields $\{ {\cal F}_t \}_{t \ge 0}$ are nested,
$\{ {\mathbb Y}(t) : \ t \ge 0 \}$ is a  martingale  (and it is often called ``Doob's martingale'').
Furthermore, it can be represented in terms of the basic martingale ${\mathbb M}$
using the fundamental identity  $L \circ R = I$  on $L_2^0(F) $ discussed above:
since $\psi = L \circ R \psi $ we see that
\begin{eqnarray*}
{\mathbb Y} (t) & = & E \{  \psi | {\cal F}_t \} =  E \{ L\circ R  \psi | {\cal F}_t \}
 =  \int_{[0,t]}  R \psi (s) d {\mathbb M} (s) .
\end{eqnarray*}

This set of connections deserves to be explored further.
In particular we conjecture that many of the interesting properties
of the classical Hardy operator $H$ and the dual Hardy operator $H^*$ established in the
series of papers by
\cite{MR1676324},  
\cite{MR2390520},  
\cite{MR2747011},   
\cite{MR3180926},    
\cite{MR3868629},   
\cite{MR4051866},   
and \cite{MR4088501}   
will have useful analogues for $\overline{H}_F$ and $\overline{H}_F^*$
in the probability setting for Hardy's inequalities which we have considered here.
On the other hand, the martingale connections of the operators  $L$ and $R$ perhaps
deserve to be better known in the world of classical Hardy type inequalities.

For further explanation of the connections of these processes with right and left censored data
problems in survival analysis, see the Appendix,  Section ~\ref{sec:Appendix} .

If $X_1, \ldots , X_n$ are i.i.d. with (continuous distribution function) $F$,
then
$$
{\mathbb N}_n (t) \equiv \sum_{i=1}^n 1_{[X_i \le t]}  = n {\mathbb F}_n (t)
$$
is a counting process which is simply the sum of independent counting processes
and the sum of the corresponding counting process martingales is again a counting process martingale:
$$
{\mathbb M}_n (t) \equiv \sum_{i=1}^n {\mathbb M}_i (t) = {\mathbb N}_n (t) - \int_0^t  \YY_n (s) \overline{\Lambda} (s)
$$
where $\YY_n (t) \equiv \sum_{i=1}^n 1_{[X_i \ge t]} $ is the number of $X_i$'s ``at risk'' at time $t$.

\section{Proofs}
\label{sec:proofs}

\subsection{Proofs for Section \ref{sec:Mr}}

In order to prove our random variable version of Hardy's inequality we need a Lemma.
This Lemma
has the same structure as
Broadbent's proof of Hardy's inequality (\ref{Hardy}), which is a slightly improved version of
Elliot's proof; see \cite{MR1574000}, 
\cite{MR1574962}, and 
\cite{MR0046395}, page 240.

\begin{Lemma}\label{Broadbent}
Let $a_i$ and $p_i$ be nonnegative numbers for $i=1, \dots,m,$ with $p_1 >0.$
For $p>1$ the inequality
\begin{equation}\label{Broadbent1}
\sum_{n=1}^m \left( \frac {\sum_{i=1}^n a_i p_i}{\sum_{i=1}^n p_i} \right)^p p_n
\leq \left( \frac p{p-1} \right)^p \sum_{n=1}^m a_n^p p_n
\end{equation}
holds.
\end{Lemma}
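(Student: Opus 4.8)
The plan is to follow the Elliott--Broadbent strategy, adapted to accommodate the weights $p_i$. First I would introduce the partial sums $P_n = \sum_{i=1}^n p_i$ and $A_n = \sum_{i=1}^n a_i p_i$, together with the weighted averages $\alpha_n = A_n / P_n$; the hypothesis $p_1 > 0$ guarantees $P_n \geq p_1 > 0$ for every $n$, so the $\alpha_n$ are well defined. The defining recursion $P_n \alpha_n = P_{n-1}\alpha_{n-1} + a_n p_n$ (with the convention $P_0 = 0$) supplies the key substitution $a_n p_n = P_n \alpha_n - P_{n-1}\alpha_{n-1}$, which re-expresses each ``input'' $a_n$ in terms of two consecutive averages.

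The heart of the argument is a term-by-term bound on the auxiliary quantity $\alpha_n^p p_n - \frac{p}{p-1}\alpha_n^{p-1} a_n p_n$. Substituting $a_n p_n = P_n\alpha_n - P_{n-1}\alpha_{n-1}$ and then applying the weighted AM--GM (Young) inequality in the form $\alpha_n^{p-1}\alpha_{n-1} \leq \frac{p-1}{p}\alpha_n^p + \frac{1}{p}\alpha_{n-1}^p$, I expect the coefficient of $\alpha_n^p$ to collapse, using $p_n = P_n - P_{n-1}$, to exactly $-P_n/(p-1)$. This should yield the telescoping bound
$$\alpha_n^p p_n - \frac{p}{p-1}\alpha_n^{p-1}a_n p_n \leq \frac{1}{p-1}\left(P_{n-1}\alpha_{n-1}^p - P_n \alpha_n^p\right).$$
Summing over $n = 1,\dots,m$ collapses the right-hand side to $-P_m\alpha_m^p/(p-1) \leq 0$, so that $\sum_{n=1}^m \alpha_n^p p_n \leq \frac{p}{p-1}\sum_{n=1}^m \alpha_n^{p-1}a_n p_n$.

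To finish, I would apply H\"older's inequality with conjugate exponents $p/(p-1)$ and $p$ to the right-hand sum, splitting each term as $(\alpha_n^{p-1}p_n^{(p-1)/p})(a_n p_n^{1/p})$, which produces
$$\sum_{n=1}^m \alpha_n^{p-1}a_n p_n \leq \left(\sum_{n=1}^m \alpha_n^p p_n\right)^{(p-1)/p}\left(\sum_{n=1}^m a_n^p p_n\right)^{1/p}.$$
Writing $S = \sum_{n=1}^m \alpha_n^p p_n$ and $T = \sum_{n=1}^m a_n^p p_n$, the two displays combine to $S \leq \frac{p}{p-1}S^{(p-1)/p}T^{1/p}$; since $S$ is a finite sum one may assume $S>0$ (the case $S=0$ being trivial) and divide by $S^{(p-1)/p}$ to obtain $S^{1/p} \leq \frac{p}{p-1}T^{1/p}$, which is the claimed bound $S \leq (p/(p-1))^p T$.

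I expect the main obstacle to be the algebraic collapse in the second paragraph: forcing the coefficient of $\alpha_n^p$ to reduce precisely to $-P_n/(p-1)$, so that the sum telescopes, is exactly the delicate bookkeeping that makes Elliott's method deliver the sharp constant $(p/(p-1))^p$. Care is also needed at the boundary ($n=1$, with $P_0 = 0$) and in checking that the Young step is applied with the correct split of weights, so that no slack is introduced beyond what the telescoping sum absorbs.
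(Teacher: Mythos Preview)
Your proposal is correct and follows essentially the same Elliott--Broadbent strategy as the paper: the same notation $P_n,A_n,B_n(=\alpha_n)$, the same substitution $a_n p_n = P_n\alpha_n - P_{n-1}\alpha_{n-1}$, the same Young inequality step, the same telescoping sum yielding $\frac{p-1}{p}\sum p_n\alpha_n^p \le \sum a_n p_n \alpha_n^{p-1}$, and the same H\"older finish. The only difference is cosmetic bookkeeping---the paper starts from $P_n B_n^p$ and rearranges, whereas you start directly from $\alpha_n^p p_n - \frac{p}{p-1}\alpha_n^{p-1}a_n p_n$---but the content is identical.
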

With $p_i=1$ this inequality is a finite sum version of the discrete Hardy inequality
(\ref{DiscHardyIneq}). Taking limits as  $m \to \infty$ first on  the right hand side and subsequently
on the left hand side of (\ref{Broadbent1}) with $p_i=1$ we obtain the discrete Hardy inequality itself.

\begin{proof} {\em of Lemma \ref{Broadbent}} \\
With the notation
$P_n = \sum_{i=1}^n p_i,\, A_n = \sum_{i=1}^n a_i p_i,\, B_n = A_n / P_n,\,n=1,\dots,m,\\ A_0=B_0=P_0=0$
we rewrite
\begin{equation}\label{NB1}
a_n p_n B_n^{p-1} = \left( A_n - A_{n-1} \right) B_n^{p-1} =  \left(P_n B_n -P_{n-1} B_{n-1} \right) B_n^{p-1}
\end{equation}
into
\begin{equation}\label{NB2}
P_n B_n^p = a_n p_n B_n^{p-1} + P_{n-1} B_{n-1} B_n^{p-1}.
\end{equation}
By Young's inequality ($uv \le u^p/p + v^{p'}/p'$ with $1/p + 1/p' =1$), this implies
\begin{equation}\label{NB3}
P_n B_n^p \leq a_n p_n B_n^{p-1} + P_{n-1} \left( \frac 1p B_{n-1}^p + \frac{p-1}p B_n^p \right)
\end{equation}
and hence
\begin{equation}\label{NB4}
\left( P_n - \frac{p-1}p P_{n-1} \right) B_n^p \leq a_n p_n B_n^{p-1} + \frac 1p P_{n-1} B_{n-1}^p.
\end{equation}
Summing this inequality over $n$ we obtain
\begin{equation}\label{NB5}
\sum_{n=1}^{m} P_n B_n^p - \frac{p-1}p  \sum_{n=1}^{m} P_{n-1} B_n^p
\leq \sum_{n=1}^{m} a_n p_n B_n^{p-1} + \frac 1p \sum_{n=1}^{m -1} P_n B_n^p,
\end{equation}
which is equivalent to
\begin{equation}\label{NB6}
\frac 1p P_{m} B_{m}^p + \frac{p-1}p \sum_{n=1}^{m} \left( P_n - P_{n-1} \right) B_n^p \leq \sum_{n=1}^{m} a_n p_n B_n^{p-1}.
\end{equation}
By H\"older's inequality this yields
\begin{equation}\label{NB6}
\frac{p-1}p \sum_{n=1}^{m} p_n B_n^p \leq \left( \sum_{n=1}^{m} a_n^p p_n \right)^{1/p}
\left( \sum_{n=1}^{m} B_n^p p_n \right)^{(p-1)/p}
\end{equation}
and hence
\begin{equation}\label{NB6}
\left( \sum_{n=1}^{m} B_n^p p_n \right)^{1/p} \leq \frac p{p-1} \left( \sum_{n=1}^{m} a_n^p p_n \right)^{1/p}
\end{equation}
and (\ref{Broadbent1}).
\end{proof}

\begin{proof}{\em {of Theorem \ref{HardyIneq}.}}\\
For large $N$ we define $y_{N,i} = F^{-1}(i/N),\ i = 0, \dots,N-1,\ y_{N,N} = \infty,$
and we apply Lemma \ref{Broadbent} with $m=N$ and
\begin{equation}\label{NP1}
p_n = \int_{(y_{N,n-1},y_{N,n}]} dF,\quad a_n = \int_{(y_{N,n-1},y_{N,n}]} \psi dF/p_n,\quad n=1,\dots,N.
\end{equation}
By Jensen's inequality we have
\begin{equation}\label{NP2}
a_n^p \leq \int_{(y_{N,n-1},y_{N,n}]} \psi^p dF/p_n,\quad n=1,\dots,N,
\end{equation}
and hence
\begin{equation}\label{NP3}
\sum_{n=1}^N a_n^p p_n \leq \sum_{n=1}^N \int_{(y_{N,n-1},y_{N,n}]} \psi^p dF =  E\left( \psi^p(Y) \right).
\end{equation}
For any $x \in {\mathbb R}$ there exists an index $n(N,x)$ with $x \in (y_{N,n(N,x)-1}, y_{N,n(N,x)}]$.
Consequently we have
\begin{eqnarray}\label{NP4}
\lefteqn{\sum_{n=1}^N \left( \int_{(-\infty,y_{N,n}]} \psi dF / F(y_{N,n}) \right)^p {\bf 1}_{(y_{N,n-1},y_{N,n}]}(x) } \\
&& =  \left( \int_{(-\infty,y_{N,n(N,x)}]} \psi dF / F(y_{N,n(N,x)}) \right)^p
\geq \left( \int_{(-\infty,x]} \psi dF / F(y_{N,n(N,x)}) \right)^p \nonumber
\end{eqnarray}
and hence by Tonelli's theorem, Fatou's lemma and the right continuity of $F$
\begin{eqnarray}\label{NP5}
\lefteqn{\liminf_{N \to \infty} \sum_{n=1}^N \left( \frac {\sum_{i=1}^n a_i p_i}{\sum_{i=1}^n p_i} \right)^p p_n } \\
&& = \liminf_{N \to \infty} \sum_{n=1}^N \int _{\mathbb R} \left( \int_{(-\infty,y_{N,n}]} \psi dF / F(y_{N,n}) \right)^p
{\bf 1}_{(y_{N,n-1},y_{N,n}]}(x) dF(x) \nonumber \\
&& = \liminf_{N \to \infty} \int _{\mathbb R} \sum_{n=1}^N \left( \int_{(-\infty,y_{N,n}]} \psi dF / F(y_{N,n}) \right)^p
{\bf 1}_{(y_{N,n-1},y_{N,n}]}(x) dF(x) \nonumber \\
&& \geq \int _{\mathbb R} \liminf_{N \to \infty} \sum_{n=1}^N \left( \int_{(-\infty,y_{N,n}]} \psi dF / F(y_{N,n}) \right)^p
{\bf 1}_{(y_{N,n-1},y_{N,n}]}(x) dF(x) \nonumber \\
&& \geq \int _{\mathbb R} \liminf_{N \to \infty} \left( \int_{(-\infty,x]} \psi dF / F(y_{N,n(N,x)}) \right)^p dF(x) \nonumber \\
&& = \int _{\mathbb R} \left( \int_{(-\infty,x]} \psi dF / F(x) \right)^p dF(x)
= E\left( \left[ \frac{E\left( \psi(Y) {\bf 1}_{[Y \leq X]} \mid X \right)}{F(X)} \right]^p \right). \nonumber
\end{eqnarray}
Combining (\ref{NP5}), Lemma \ref{Broadbent} and (\ref{NP3}) we arrive at a proof of Theorem \ref{HardyIneq}.
\end{proof}

\subsection{Proofs for Section \ref{Hiww}}
\begin{proof} {\em of Theorem \ref{LiMao}}. \
If $B$ equals infinity, inequality (\ref{LiMao1}) is trivial.
So, we may assume that $B$ is finite and hence for any $r \in {\mathbb R}$ that $\mu([r,\infty)) = \infty$ implies $\nu((-\infty,r]) = 0$.
Define
\begin{equation}\label{LiMao13}
{\cal R} = \{ r \,:\, \mu([r,\infty)) < \infty,\ r \in {\mathbb R} \},\quad R_0 = \inf {\cal R}
\end{equation}
and choose $R \geq R_0$.
If ${\cal R}=[R_0,\infty)$ holds, then without loss of generality we may assume that $\mu$ is a finite Borel measure and we take $R=R_0$.
However, if ${\cal R}=(R_0,\infty)$ holds, then we have $\mu([R_0,\infty)) = \infty$ and we take $R > R_0$.
Furthermore, define
\begin{equation}\label{LiMao9}
S_0 = \sup \{ s \,:\, \mu([s,\infty)) >0,\ s \in {\mathbb R} \}
\end{equation}
and note that $S_0 = \infty$ might hold.
If $S_0 = -\infty$ holds, $\mu$ is the null measure and inequality (\ref{LiMao1}) is trivial.
Let $S \leq S_0$ be such that $M_S = \mu([S, \infty)) > 0$ holds.

We introduce the finite measure $\mu_{R,S}$ that has no mass on $(-\infty,R) \cup (S, \infty)$,
equals $\mu$ on the interval $[R,S)$ and has mass $M_S$ at the point $S$.
It has total mass $M_{R,S} = \mu([R,\infty))$ and ``scaled" distribution function
\begin{equation}\label{LiMao14}
F_{R,S}(x) = \mu([R, x])/M_{R,S} {\bf 1}_{[x<S]} + {\bf 1}_{[x \geq S]},\quad x \in {\mathbb R},
\end{equation}
with inverse
\begin{equation}\label{LiMao15}
F_{R,S}^{-1}(u) = \inf \{ x \,:\, F_{R,S}(x) \geq u \},\quad u \in [0,1].
\end{equation}
For $0< \varepsilon < 1$ we define $\delta = \varepsilon M_S / (M_{R,S} \vee 1)$.
With $N=\lceil 1/\delta \rceil$ we choose
\begin{equation}\label{LiMao16}
y_n = F_{R,S}^{-1}(n/N),\ 1 \leq n \leq N-1,\quad y_0 = R,\quad y_N = \infty.
\end{equation}
Note that $(y_{n-1},y_n)$ might be empty, i.e. $y_{n-1}=y_n$. \\
In view of $1/N \leq \delta = \varepsilon M_S / (M_{R,S} \vee 1) < M_S = \mu_{R,S}(\{ S \})$ we have $y_{N-1}=S$
and hence $\mu_{R,S}((y_{N-1}, \infty)) = \mu_{R,S}((S,\infty)) = 0$.

By Theorem 1.4 of \cite{MR3405817} we have for nonnegative $a_i, u_i, v_i,\ i=1, \dots,N$,
\begin{equation}\label{LiMao4}
\left\{ \sum_{n=1}^N \left( \sum_{i=1}^n a_i \right)^q u_n \right\}^{1/q}
\leq k_{q,p}\, B_{\rm d} \, \left\{ \sum_{i=1}^N a_i^p v_i \right\}^{1/p}
\end{equation}
with
\begin{equation}\label{LiMao5}
B_{\rm d} = \max_{1 \leq n \leq N} \left( \sum_{j=n}^N u_j \right)^{1/q} \left( \sum_{i=1}^n v_i^{-1/(p-1)} \right)^{(p-1)/p} .
\end{equation}
With $R = y_0 \leq y_1 \leq \cdots \leq y_N = \infty $ as in (\ref{LiMao16}) we choose
$a_i = \int_{(y_{i-1},y_i]} \psi d\nu,\\ u_i = \int_{(y_{i-1},y_i]} d\mu_{R,S},\
v_i = \left( \int_{(y_{i-1},y_i]} d\nu \right)^{1-p}$ with $v_i = 0$ if $\int_{(y_{i-1},y_i]} d\nu = 0, \ i=2, \dots, N$,
and $a_1 = \int_{[R,y_1]} \psi d\nu ,\ u_1 = \int_{[R,y_1]} d\mu_{R,S},\ v_1 = \left( \int_{[R,y_1]} d\nu \right)^{1-p}$
with $v_1 = 0$ if $\int_{([R,y_1]} d\nu = 0$. \\
With these choices the left hand side of (\ref{LiMao4}) to the power $q$ satisfies
\begin{eqnarray}\label{LiMao6}
\lefteqn{ \sum_{n=1}^N \left( \sum_{i=1}^n a_i \right)^q u_n
= \sum_{n=2}^N \int_{(y_{n-1},y_n]} \left( \int_{[R,y_n]} \psi d\nu \right)^q d\mu_{R,S} \nonumber } \\
&& \hspace{5em} + \int_{[R,y_1]} \left( \int_{[R,y_1]} \psi d\nu \right)^q d\mu_{R,S} \\
&& \geq \sum_{n=2}^N \int_{(y_{n-1},y_n]} \left( \int_{[R,x]} \psi d\nu \right)^q d\mu_{R,S}(x)
          + \int_{[R,y_1]} \left( \int_{[R,x]} \psi d\nu \right)^q d\mu_{R,S}(x) \nonumber \\
&&         =  \int_{[R,\infty)} \left( \int_{[R,x]} \psi d\nu \right)^q d\mu_{R,S}(x)
         =  \int_{\mathbb R} \left( \int_{[R,x]} \psi d\nu \right)^q d\mu_{R,S}(x) \nonumber \\
&& =  \int_{\mathbb R} \left( \int_{[R,x \wedge S]} \psi d\nu \right)^q d\mu_{R,S}(x)
   =  \int_{\mathbb R} \left( \int_{[R,x \wedge S]} \psi d\nu \right)^q d\mu(x). \nonumber
\end{eqnarray}
Furthermore, by Jensen's inequality (or H\"older) the third factor at the right hand side of (\ref{LiMao4}) to the power $p$ satisfies
\begin{eqnarray}\label{LiMao7}
\lefteqn{  \sum_{i=1}^N a_i^p v_i =  \sum_{i=2}^N \left( \int_{(y_{i-1},y_i]} \psi d\nu \right)^p v_i
          + \left( \int_{[R,y_1]} \psi d\nu \right)^p v_1 } \\
&& \leq \sum_{i=2}^N \int_{(y_{i-1},y_i]} \psi^p d\nu \left( \int_{(y_{i-1},y_i]} d\nu \right)^{p-1} v_i
     + \int_{[R,y_1]} \psi^p d\nu \left( \int_{[R,y_1]} d\nu \right)^{p-1} v_1 \nonumber \\
&& = \int_{[R,\infty)} \psi^p d\nu \leq \int_{\mathbb R} \psi^p d\nu, \nonumber
\end{eqnarray}
where the last expression equals the third factor at the right hand side of (\ref{LiMao1}) to the power $p$.
With these choices $B_{\rm d}$ from (\ref{LiMao5}) becomes
\begin{eqnarray}\label{LiMao8}
\lefteqn{ B_{\rm d} = B(y_0,\dots , y_N) \nonumber } \\
&& = \max \left\{ \left( \mu_{R,S}([R,\infty) \right)^{1/q} \left( \nu(-\infty, y_1]) \right)^{(p-1)/p}, \right. \\
&& \hspace{5em} \left. \max_{2 \leq n \leq N} \left( \mu_{R,S}((y_{n-1},\infty) \right)^{1/q} \left( \nu(-\infty, y_n]) \right)^{(p-1)/p} \right\}. \nonumber
\end{eqnarray}
For $2 \leq n \leq N-1$ we have
\begin{eqnarray}\label{LiMao10}
\lefteqn{ \mu_{R,S}((y_{n-1},\infty)) =  \mu_{R,S}([y_n, \infty)) + \mu_{R,S}((y_{n-1},y_n)) } \\
&& \leq \mu_{R,S}([y_n, \infty))\left[ 1 + \frac {\mu_{R,S}((y_{n-1},y_n))}{M_S} \right]
\leq \mu_{R,S}([y_n, \infty))\left[ 1 + \frac {M_{R,S}}{N M_S} \right] \nonumber \\
&&  \leq \mu_{R,S}([y_n, \infty)) [ 1 + \varepsilon ] \nonumber
\end{eqnarray}
and analogously we obtain
\begin{equation}\label{LiMao25}
\mu_{R,S}([R,\infty)) \leq \mu_{R,S}([y_1, \infty)) [ 1 + \varepsilon ].
\end{equation}
This implies that $B_{\rm d}$ from (\ref{LiMao8}) becomes [recall $\mu_{R,S}((y_{N-1}, \infty)) = 0$]
\begin{eqnarray}\label{LiMao11}
\lefteqn{ B_{\rm d} = B(y_0,\dots , y_N) \nonumber } \\
&& \leq [1+\varepsilon] \max_{1 \leq n \leq N-1} \left( \mu_{R,S}([y_n,\infty) \right)^{1/q} \left( \nu(-\infty, y_n]) \right)^{(p-1)/p} \\
&& \leq [1+\varepsilon] \sup_{R \leq x \leq S} \left( \mu([x,\infty) \right)^{1/q} \left( \nu(-\infty, x]) \right)^{(p-1)/p}
   \leq [1+\varepsilon] B, \nonumber
\end{eqnarray}
where $B$ is as in (\ref{LiMao2}).
Since $\varepsilon$ may be chosen arbitrarily close to 0, this implies together with (\ref{LiMao14}) through (\ref{LiMao7}) that inequality (\ref{LiMao1}) holds with the left hand side replaced by the right hand side of (\ref{LiMao6}) to the power $1/q$.

In the case of $R>R_0$ we have $\mu([R_0,\infty)) = \infty$ and hence $\nu((-\infty,R_0]) = 0$ and monotone convergence shows that the right hand side of (\ref{LiMao6}) satisfies
\begin{eqnarray}\label{LiMao21}
\lefteqn{ \lim_{R \downarrow R_0} \int_{\mathbb R} \left( \int_{[R,x \wedge S]} \psi d\nu \right)^q d\mu(x)
           =  \int_{\mathbb R} \left( \int_{(R_0,x \wedge S]} \psi d\nu \right)^q d\mu(x) \nonumber } \\
&& \hspace{10em} = \int_{\mathbb R} \left( \int_{(-\infty,x \wedge S]} \psi d\nu \right)^q d\mu(x).
\end{eqnarray}
In the case of $R=R_0$ we have $\nu((-\infty,R_0)) = 0$ and hence the right hand side of (\ref{LiMao6}) equals
\begin{equation}\label{LiMao22}
\int_{\mathbb R} \left( \int_{[R_0,x \wedge S]} \psi d\nu \right)^q d\mu(x)
= \int_{\mathbb R} \left( \int_{(-\infty,x \wedge S]} \psi d\nu \right)^q d\mu(x).
\end{equation}
In the case of $\mu([S_0,\infty))=\mu(\{S_0\}) >0$ we may choose $S=S_0$ and the right hand side of (\ref{LiMao22}) equals
\begin{equation}\label{LiMao23}
\int_{\mathbb R} \left( \int_{(-\infty,x \wedge S_0]} \psi d\nu \right)^q d\mu(x)
= \int_{\mathbb R} \left( \int_{(-\infty,x]} \psi d\nu \right)^q d\mu(x).
\end{equation}
In the case of $S_0 = \infty$ or $S_0 < \infty,\ \mu([S_0,\infty)) =0$ we choose $S < S_0$ and monotone convergence shows that the right hand side of (\ref{LiMao22}) satisfies
\begin{eqnarray}\label{LiMao24}
\lefteqn{ \lim_{S \uparrow S_0} \int_{\mathbb R} \left( \int_{(-\infty,x \wedge S]} \psi d\nu \right)^q d\mu(x)
  = \lim_{S \uparrow S_0} \int_{\mathbb R} \left( \int_{(-\infty,x] \cap (-\infty,S]} \psi d\nu \right)^q d\mu(x) \nonumber } \\
&&   =  \int_{\mathbb R} \left( \int_{(-\infty,x] \cap (-\infty,S_0)} \psi d\nu \right)^q d\mu(x)
      = \int_{\mathbb R} \left( \int_{(-\infty,x]} \psi d\nu \right)^q d\mu(x).
\end{eqnarray}
Since inequality (\ref{LiMao1}) holds with the left hand side replaced by the right hand side of (\ref{LiMao6}) to the power $1/q$,
the above argument involving (\ref{LiMao21}) through (\ref{LiMao24}) completes the proof of (\ref{LiMao1}) and the theorem.
\end{proof}

For the proof of Theorem \ref{MuckenhouptII} we need the following Lemma.

\begin{Lemma}\label{LemmaMII}
For $F$ and $G$ distribution functions, $\chi$ a nonnegative measurable function and $0< \gamma <1$ we have
\begin{equation}\label{M3II}
\gamma \int_{(-\infty,x]} \chi(y) \left[ \int_{(-\infty,y]} \chi dG \right]^{\gamma -1} dG(y)
\leq  \left[ \int_{(-\infty,x]} \chi dG \right]^\gamma
\end{equation}
and
\begin{equation}\label{M4II}
\gamma \int_{[y,\infty)} \chi(x) \left[ \int_{[x,\infty)} \chi dF \right]^{\gamma -1} dF(x)
\leq  \left[ \int_{[y,\infty)} \chi dF \right]^\gamma.
\end{equation}
\end{Lemma}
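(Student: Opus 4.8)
The plan is to recognize both inequalities as instances of a single chain-rule inequality for the concave power $\Phi(t)=t^\gamma$, and to derive the second from the first by the reflection $X\mapsto -X$ already used in Remark~\ref{othertail}. For the first inequality I set $H(y)=\int_{(-\infty,y]}\chi\,dG$, so that $H$ is nondecreasing and right-continuous with $H(-\infty)=0$ and $dH=\chi\,dG$ as Lebesgue--Stieltjes measures. Since $\Phi'(t)=\gamma t^{\gamma-1}$, the left-hand side is exactly $\int_{(-\infty,x]}\Phi'(H(y))\,dH(y)$, so the claim becomes $\int_{(-\infty,x]}\Phi'(H(y))\,dH(y)\le\Phi(H(x))$. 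If $H(x)=\infty$ this is trivial, so I may assume $H(x)<\infty$; and wherever $H$ vanishes the measure $dH$ places no mass, so the formally infinite factor $\Phi'(H)$ contributes nothing there (convention $0\cdot\infty=0$).

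The heart of the argument is the comparison of measures $d(\Phi\circ H)\ge\Phi'(H)\,dH$ on $(-\infty,x]$. On the continuous part of $H$ the standard change-of-variables formula gives equality, $d(\Phi\circ H)_c=\Phi'(H)\,dH_c$; at each atom $y$ of $H$ the concavity of $\Phi$ yields $\Phi(H(y))-\Phi(H(y-))\ge\Phi'(H(y))\,(H(y)-H(y-))$, i.e. the jump of $\Phi\circ H$ dominates $\Phi'(H(y))\,\Delta H(y)$. Summing the continuous and jump contributions and using the telescoping identity $\int_{(-\infty,x]}d(\Phi\circ H)=\Phi(H(x))-\Phi(H(-\infty))=\Phi(H(x))$ then gives the first inequality.

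For the second inequality I would apply the first one to the random variable $-X$, whose distribution function is that described in Remark~\ref{othertail}, together with the reflected integrand $v\mapsto\chi(-v)$. Under $u=-x$ the right-closed interval $(-\infty,u]$ transforms into the left-closed interval $[x,\infty)$, so $\int_{(-\infty,u]}\chi(-v)\,dG(v)$ becomes $\int_{[x,\infty)}\chi\,dF$ and the outer integral over $(-\infty,-y]$ becomes the integral over $[y,\infty)$; substituting everything back turns the first inequality verbatim into the second. Equivalently, one may simply repeat the measure-comparison argument directly for the nonincreasing left-continuous function $x\mapsto\int_{[x,\infty)}\chi\,dF$.

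The main obstacle I anticipate is purely the bookkeeping at atoms: the naive ``fundamental theorem'' $\int\Phi'(H)\,dH=\Phi(H(x))$ holds with equality only when $H$ is continuous, and the entire content of the lemma is that concavity of $t\mapsto t^\gamma$ (which is exactly what forces $0<\gamma<1$) makes every jump contribute in the favorable direction. One must also track the half-open intervals and the endpoint conventions carefully through the reflection, so that closed-on-the-right is sent to closed-on-the-left and the atoms are counted on the correct side.
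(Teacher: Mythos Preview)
Your argument is correct, and the reduction of the second inequality to the first by reflection is exactly what the paper does (``By symmetry it suffices to prove (\ref{M3II})'').  The core step, however, is handled differently.

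The paper normalizes by $\int_{(-\infty,x]}\chi\,dG$ to obtain a genuine distribution function $G_x$ and then rewrites the claim as $\gamma\int G_x^{\gamma-1}\,dG_x\le 1$.  This integral is evaluated via the quantile transform: with $U$ uniform on $(0,1)$ one has $\gamma\,E\big[(G_x(G_x^{-1}(U)))^{\gamma-1}\big]\le\gamma\,E[U^{\gamma-1}]=1$, using only the elementary inequality $G_x(G_x^{-1}(u))\ge u$ and the fact that $t\mapsto t^{\gamma-1}$ is decreasing.  No decomposition into continuous and jump parts is needed, and the argument is two lines.

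Your route instead invokes the Lebesgue--Stieltjes chain rule $\Phi(H(x))=\int_{(-\infty,x]}\Phi'(H)\,dH_c+\sum_{y\le x}\bigl[\Phi(H(y))-\Phi(H(y-))\bigr]$ and then bounds each jump term from below by $\Phi'(H(y))\,\Delta H(y)$ using concavity of $\Phi(t)=t^\gamma$.  This is perfectly valid and makes transparent that the slack in the inequality lives entirely at the atoms of $H$; it also generalizes immediately to any concave $C^1$ function $\Phi$ with $\Phi(0)=0$.  The price is that the identity $d(\Phi\circ H)_c=\Phi'(H)\,dH_c$ is itself a nontrivial change-of-variables statement (it holds, but deserves a reference), whereas the paper's quantile argument sidesteps this machinery entirely and stays closer to the probabilistic spirit of the surrounding sections.
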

\begin{proof}
By symmetry it suffices to prove (\ref{M3II}), which with the distribution function
$G_x(y) =  \int_{(-\infty,y\wedge x]} \chi dG /  \int_{(-\infty,x]} \chi dG$ is equivalent to
\begin{equation}\label{M5II}
\gamma \int_{-\infty}^\infty G_x^{\gamma -1} dG_x \leq 1.
\end{equation}
With the random variable $U$ uniformly distributed on the unit interval the left hand side of this inequality equals and satisfies
\begin{equation}\label{M6II}
\gamma E\left( \left[G_x \left( G_x^{-1}(U) \right) \right]^{\gamma -1} \right) \leq \gamma E \left( U^{\gamma -1} \right) = 1.
\end{equation}
\end{proof}
\begin{proof}(of Theorem \ref{MuckenhouptII}).
The choice $\psi(y) = V^{-1/(p-1)}(y) {\bf 1}_{[y \leq r]}$ in inequality (\ref{M1II}) leads to the string of (in)equalities
\begin{eqnarray}\label{M11II}
\lefteqn{ \left[ \int_{(-\infty, x]}V^{-1/(p-1)} dG \right]^p \int_{[x,\infty)} U dF \nonumber } \\
&& = E\left(  \left[ E\left( V^{-1/(p-1)}(Y) {\bf 1}_{[Y \leq x]} \right) \right]^{p} U(X) {\bf 1}_{[X \geq x]} \right) \\
&& \leq E\left(  \left[ E\left( V^{-1/(p-1)}(Y) {\bf 1}_{[Y \leq x]} {\bf 1}_{[Y \leq X]} \mid X \right) \right]^{p} U(X) \right) \nonumber \\
&& \leq C\, E \left( V^{-1/(p-1)}(Y) {\bf 1}_{[Y \leq x]} \right) = C \,  \int_{(-\infty, x ]}V^{-1/(p-1)} dG ,\quad  x \in {\mathbb R}, \nonumber
\end{eqnarray}
which implies the first inequality in (\ref{M3.5II}).  With
\begin{equation}\label{M7II}
h(y) = V^{1/p}(y) \left[\int_{(-\infty,y]} V^{-1/(p-1)} dG \right]^{(p-1)/p^2}
\end{equation}
inequality (\ref{M3II}) of Lemma \ref{LemmaMII} with $\chi = V^{-1/(p-1)}$ and $\gamma = 1-1/p = (p-1)/p$ yields
\begin{eqnarray}\label{M8II}
\lefteqn{ E \left( h^{-p/(p-1)}(Y) {\bf 1}_{[Y \leq x]} \right) \nonumber} \\
&& = \int_{(-\infty,x]} V^{-1/(p-1)}(y) \left[ \int_{(-\infty,y]} V^{-1/(p-1)} dG \right]^{-1/p} dG(y) \nonumber \\
&& \leq \frac p{p-1} \left[ \int_{(-\infty,x]} V^{-1/(p-1)} dG \right]^{(p-1)/p}.
\end{eqnarray}
By H\"older's inequality this implies
\begin{eqnarray}\label{M9II}
\lefteqn{ E\left(  \left[ E\left( \psi(Y) {\bf 1}_{[Y \leq X]} \mid X \right) \right]^{p} U(X) \right) \nonumber } \\
&& = E\left(  \left[  E\left( \psi(Y) h(Y) (h(Y))^{-1} {\bf 1}_{[Y \leq X]} \mid X \right) \right]^{p} U(X) \right) \nonumber \\
&& \leq E \Bigg( E \left(\psi^p(Y)h^p(Y)  {\bf 1}_{[Y \leq X]} \mid X \right) \nonumber \\
&& \qquad \left. \left[ E \left( h^{-p/(p-1)}(Y) {\bf 1}_{[Y \leq X]} \mid X \right) \right]^{p-1} U(X) \right) \\
&& \leq \left(\frac p{p-1} \right)^{p-1} E \Bigg( E \left( \psi^p(Y)h^p(Y)  {\bf 1}_{[Y \leq X]} \mid X \right) \nonumber \\
&& \qquad \left. \left[ \int_{(-\infty,X]} V^{-1/(p-1)} dG \right]^{(p-1)^2/p} U(X) \right) \nonumber \\
&& = \left(\frac p{p-1} \right)^{p-1} E \left( \psi^p(Y)h^p(Y)
 E \left( \left[ \int_{(-\infty,X]} V^{-1/(p-1)} dG \right]^{(p-1)^2/p} \right. \right. \nonumber \\
&& \hspace{10em} U(X){\bf 1}_{[Y \leq X]} \mid Y \Bigg) \Bigg). \nonumber
\end{eqnarray}
By the definition of $B$ in (\ref{M2II}) the right hand side of (\ref{M9II}) is bounded from above by
\begin{eqnarray}\label{M10II}
\lefteqn{ \left(\frac p{p-1} \right)^{p-1} B^{(p-1)/p} E \left( \psi^p(Y) V(Y)  \left[\int_{(-\infty,Y]} V^{-1/(p-1)} dG \right]^{(p-1)/p}
\right. \nonumber } \\
&& E \left( \left[ \int_{[X,\infty)} U dF \right]^{(1/p)-1} U(X){\bf 1}_{[Y \leq X]} \mid Y \right) \Bigg) \\
&& \leq \frac {p^p}{(p-1)^{p-1}} B^{(p-1)/p} E \left( \psi^p(Y) V(Y)  \left[\int_{(-\infty,Y]} V^{-1/(p-1)} dG \right]^{(p-1)/p} \right.
 \nonumber \\
&& \hspace{10em} \left[ \int_{[Y,\infty)} U dF \right]^{1/p} \Bigg), \nonumber
\end{eqnarray}
where the inequality follows from (\ref{M4II}) of Lemma \ref{LemmaMII}.
By the definition of $B$ the last expression is bounded by the right hand side of (\ref{M3.5II}), which completes the proof of (\ref{M3.5II}).
\end{proof}

\subsection{Proofs for Section \ref{ArHi}}

\begin{proof} {\em of Theorem~\ref{ReverseHardyIneq}.}
Let $f$ be a density of $F$.
The monotonicity of $\psi$ implies
\begin{eqnarray}\label{5a}
\frac d{dx} \left[ \int_{-\infty}^x \psi(y) dF(y)\right]^p
& = & p  \left[ \int_{-\infty}^x \psi(y) dF(y)\right]^{p-1} \psi(x)f(x) \nonumber  \\
& \geq & p \psi^p(x) F^{p-1}(x) f(x)
\end{eqnarray}
for Lebesgue almost all $x \in {\mathbb R}$.
So we have
\begin{equation}\label{5b}
\left[ \int_{-\infty}^x \psi(y) dF(y)\right]^p \geq p \int_{-\infty}^x \psi^p(y) (F(y))^{p-1} dF(y)
\end{equation}
and hence
\begin{eqnarray}\label{5c}
\lefteqn{ E\left( \left[ \frac{E\left( \psi(Y) {\bf 1}_{[Y \leq X]} \mid X \right)}{F(X)} \right]^p \right) \nonumber } \\
&& \geq  p \int_{-\infty}^\infty \int_{-\infty}^x \psi^p(y) F^{p-1}(y) dF(y) F^{-p}(x) dF(x) \nonumber \\
&& = p \int_{-\infty}^\infty \int_y^\infty F^{-p}(x)f(x) dx \psi^p(y) (F(y))^{p-1} dF(y) \\
&& = \frac p{p-1}\int_{-\infty}^\infty\left[F^{1-p}(y) -1 \right] \psi^p(y) (F(y))^{p-1} dF(y) \nonumber \\
&& = \frac p{p-1} E\left(\psi^p(Y) \left(1- F^{p-1}(Y) \right) \right), \nonumber
\end{eqnarray}
which is the first inequality of (\ref{ReverseDensityHardy}).
Since $\psi^p$ and $1-F^{p-1}$ are both nonincreasing, $\psi^p(Y)$ and $1- F^{p-1}(Y)$
are nonnegatively correlated and consequently their covariance is nonnegative implying
\begin{eqnarray}\label{5d}
E\left(\psi^p(Y) \left(1- F^{p-1}(Y) \right) \right)
& \geq & E\left(\psi^p(Y)\right) E\left(1-  F^{p-1}(Y) \right) \nonumber  \\
& = & \frac{p-1}p  E\left(\psi^p(Y)\right).
\end{eqnarray}
This results in the second inequality of (\ref{ReverseDensityHardy}).

Note that inequality (\ref{ReverseGeneralHardy}) and hence the inequality between
the left hand side and the right hand side of (\ref{ReverseDensityHardy}) is obvious as $\psi$ is nonincreasing.

Let $F$ be general and $p$ integer.
As $X_1, \dots, X_p$ are independent and identically distributed and
$\psi(\cdot){\bf 1}_{[\cdot \leq x]}$ is nonincreasing, we have
\begin{equation}\label{5e}
E\left(\prod_{i=1}^p \psi(X_i){\bf 1}_{[X_i \leq x]} \right) \geq E \left(\psi^p(X_{(p)}) {\bf1}_{[X_{(p)} \leq x]} \right)
\end{equation}
and hence
\begin{equation}\label{5f}
E\left( \left[ \frac{E\left( \psi(Y) {\bf 1}_{[Y \leq X]} \mid X \right)}{F(X)} \right]^p \right)
\geq E \left(\psi^p(X_{(p)}) {\bf1}_{[X_{(p)} \leq X]} F^{-p}(X) \right),
\end{equation}
which implies (\ref{ReverseIntegerHardy}).
\end{proof}

\begin{proof} of Corollary \ref{ReverseInequalities}.
Let $X$ and $Y$ be uniformly distributed on the interval $(0,K)$.
Our reverse Hardy inequality (\ref{ReverseDensityHardy}) becomes
\begin{equation}\label{5g}
\frac 1K \int_0^K \left[ \frac 1x \int_0^x \psi(y) dy \right]^p dx
\geq \frac p{p-1}\, \frac 1K \int_0^K \psi^p(y) \left(1-\left(\frac{y}K \right)^{p-1} \right) dy,
\end{equation}
which for $0< \varepsilon \leq 1$ implies
\begin{equation}\label{5h}
\int_0^K \left[ \frac 1x \int_0^x \psi(y) dy \right]^p dx
\geq \frac p{p-1} \int_0^{\varepsilon K} \psi^p(y) \left(1-\varepsilon^{p-1} \right) dy,
\end{equation}
Taking limits for $K \to \infty$ and subsequently $\varepsilon \downarrow 0$ we arrive at (\ref{ReverseContHardyIneq}).

For the second part of the corollary we take $X$ and $Y$ uniformly distributed on $\{1, \dots, K\}$.
In view of $P(X_{(p)} \leq n) = (n/K)^p$ our inequality (\ref{ReverseIntegerHardy}) with $\psi(k)=c_k$ becomes
\begin{equation}\label{5i}
\frac 1K \sum_{n=1}^K \left[ \frac{\frac 1K \sum_{k=1}^n c_k}{n/K} \right]^p
\geq \sum_{n=1}^K c_n^p \left[ \left(\frac nK \right)^p - \left( \frac {n-1}K \right)^p \right] \frac 1K \sum_{k=n}^K \left(\frac kK \right)^{-p},
\end{equation}
which implies
\begin{equation}\label{5j}
\sum_{n=1}^K \left[ \frac 1n \sum_{k=1}^n c_k \right]^p
\geq \sum_{n=1}^K c_n^p \left[ n^p - (n-1)^p \right] \sum_{k=n}^{K_0} \frac 1{k^p}
\end{equation}
for any integer $K_0 \leq K$ and the corresponding sum vanishing for $n>K_0$.
Taking limits as $K \to \infty$ and subsequently $K_0 \to \infty$ we obtain
\begin{equation}\label{5k}
\sum_{n=1}^\infty \left[ \frac 1n \sum_{k=1}^n c_k \right]^p
\geq \sum_{n=1}^\infty c_n^p \left[ n^p - (n-1)^p \right] \sum_{k=n}^\infty \frac 1{k^p}.
\end{equation}
Lemma 2 of \cite{MR854569} 
shows
\begin{equation}\label{5l}
\left[ n^p - (n-1)^p \right] \sum_{k=n}^\infty \frac 1{k^p} \geq \zeta(p)
\end{equation}
for $n \geq 2$.
As for $n=1$ equality holds in (\ref{5l}), the proof that for integer $p$ inequality
(\ref{ReverseDiscHardyIneq}) can be obtained from our inequality (\ref{ReverseIntegerHardy}), is complete.
\end{proof}

\subsection{Proofs for Section \ref{sec:Ci}}

We will use the following Lemma, which shows the structure of Copson's proof
of his Theorem B with sums over infinitely many terms replaced by finite sums; see \cite{MR1574056}.

\begin{Lemma}\label{C1}
Let $a_i$ and $p_i$ be nonnegative numbers for $i=1, \dots,m,$ with $p_1 >0.$
For $p>1$ the inequality
\begin{equation}\label{C2}
\sum_{n=1}^m \left( \sum_{i=n}^m \frac{a_i p_i}{\sum_{j=1}^i p_j} \right)^p p_n \leq p^p \sum_{n=1}^m a_n^p p_n
\end{equation}
holds.
\end{Lemma}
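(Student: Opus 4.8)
The plan is to mirror the structure of the proof of Lemma~\ref{Broadbent}, adapting the telescoping argument to the \emph{tail} sums that appear in Copson's inequality. First I would introduce the partial sums $P_n = \sum_{j=1}^n p_j$ and the tail averages $C_n = \sum_{i=n}^m \frac{a_i p_i}{P_i}$, so that the left-hand side of (\ref{C2}) is exactly $\sum_{n=1}^m C_n^p\, p_n$. The basic structural facts are the backward recursion $C_n - C_{n+1} = a_n p_n / P_n$ together with the boundary values $C_{m+1} = 0$ and $C_m = a_m p_m / P_m$. If all $a_i$ vanish both sides are $0$, so I may assume $\sum_{n=1}^m C_n^p p_n > 0$; since the sums are finite this quantity is also finite, which will matter at the last step.

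Next, writing $p_n = P_n - P_{n-1}$ with $P_0 = 0$, I would apply Abel summation-by-parts to obtain
\[
\sum_{n=1}^m C_n^p p_n = C_m^p P_m + \sum_{n=1}^{m-1} \left( C_n^p - C_{n+1}^p \right) P_n .
\]
Because $C_n \ge C_{n+1} \ge 0$ and $x \mapsto x^p$ is convex for $p>1$, the upper tangent bound gives $C_n^p - C_{n+1}^p \le p\, C_n^{p-1} (C_n - C_{n+1}) = p\, C_n^{p-1} a_n p_n / P_n$, so that $(C_n^p - C_{n+1}^p) P_n \le p\, C_n^{p-1} a_n p_n$. The boundary term simplifies to $C_m^p P_m = C_m^{p-1} a_m p_m$, and since $p \ge 1$ these estimates combine to
\[
\sum_{n=1}^m C_n^p p_n \le p \sum_{n=1}^m C_n^{p-1} a_n p_n .
\]

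Finally I would apply H\"older's inequality with exponents $p/(p-1)$ and $p$ to the right-hand side, splitting $C_n^{p-1} a_n p_n = \left( C_n^{p-1} p_n^{(p-1)/p} \right)\left( a_n p_n^{1/p} \right)$; since $(p-1)\cdot \frac{p}{p-1} = p$, this yields
\[
\sum_{n=1}^m C_n^{p-1} a_n p_n \le \left( \sum_{n=1}^m C_n^p p_n \right)^{(p-1)/p} \left( \sum_{n=1}^m a_n^p p_n \right)^{1/p} .
\]
Substituting back and dividing through by the finite, positive factor $\left( \sum_{n=1}^m C_n^p p_n \right)^{(p-1)/p}$ gives $\left( \sum_{n=1}^m C_n^p p_n \right)^{1/p} \le p \left( \sum_{n=1}^m a_n^p p_n \right)^{1/p}$, and raising to the $p$-th power is exactly (\ref{C2}). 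The only genuine subtlety is the legitimacy of this final division, which is why I isolate the trivial all-zero case at the outset; the convexity estimate and the boundary simplification are routine, and because the whole argument stays within finite sums no convergence issues arise.
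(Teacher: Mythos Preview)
Your proof is correct and is essentially the same argument as the paper's: both arrive at the key estimate $\sum_{n=1}^m C_n^p p_n \le p \sum_{n=1}^m a_n C_n^{p-1} p_n$ and then finish with H\"older. The only cosmetic difference is that the paper uses Young's inequality $p\,C_n^{p-1}C_{n+1}\le (p-1)C_n^p+C_{n+1}^p$ pointwise and then telescopes, whereas you first Abel-sum and then apply the tangent-line bound $C_n^p-C_{n+1}^p\le p\,C_n^{p-1}(C_n-C_{n+1})$; these two inequalities are literally rearrangements of each other, so the routes coincide.
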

Note that part of Theorem B of \cite{MR1574056} follows from this inequality
by taking limits for $m \to \infty$, first at the right hand side, subsequently within the
$p$-th power at the left hand side, and finally for the first sum at the left hand side.

\begin{proof} {\em of Lemma \ref{C1}.} \\
With the notation
\begin{equation}\label{C3}
P_n = \sum_{i=1}^n p_i,\quad A_n = \sum_{i=n}^m \frac{a_i p_i}{P_i},\quad n=1,\dots,m,\quad P_0 = A_{m+1}=0,
\end{equation}
Young's inequality (as in the proof of Lemma~\ref{Broadbent}) yields
\begin{eqnarray}\label{C4}
\lefteqn{ A_n^p p_n - p A_n^{p-1} a_n p_n = A_n^p p_n - p A_n^{p-1} P_n \left( A_n - A_{n+1} \right) } \\
&& \leq \left( p_n -p P_n \right) A_n^p + P_n \left( (p-1)A_n^p + A_{n+1}^p \right) = P_n A_{n+1}^p - P_{n-1} A_n^p \nonumber
\end{eqnarray}
for $n = 1, \dots, m.$
Summing this inequality over $n$ we obtain
\begin{equation}\label{C5}
\sum_{n=1}^{m} A_n^p p_n - p \sum_{n=1}^{m} a_n A_n^{p-1} p_n \leq 0.
\end{equation}
By H\"older's inequality the second sum in (\ref{C5}) is bounded as follows
\begin{equation}\label{C6}
\left( \sum_{n=1}^{m} a_n A_n^{p-1} p_n \right)^p \leq \sum_{n=1}^{m} a_n^p p_n \left( \sum_{n=1}^{m} A_n^p p_n \right)^{p-1}.
\end{equation}
Together with (\ref{C5}) this implies
\begin{equation}\label{C7}
\left( \sum_{n=1}^{m} A_n^p p_n \right)^p \leq p^p \left( \sum_{n=1}^{m} a_n A_n^{p-1} p_n \right)^p
\leq p^p \sum_{n=1}^{m} a_n^p p_n \left( \sum_{n=1}^{m} A_n^p p_n \right)^{p-1}
\end{equation}
and hence (\ref{C2}).
\end{proof}

\begin{proof} {\em of Theorem \ref{NCopsonIneq}}.  \\
As in the proof of Theorem \ref{HardyIneq} we define $y_{N,i} = F^{-1}(i/N),\ i = 0, \dots,N-1,\ y_{N,N} = \infty,$ for large $N$  and we apply Lemma \ref{C1} with $m=N$, but this time we choose
\begin{equation}\label{NP6}
p_n = \int_{[y_{N,n-1},y_{N,n})} dF,\quad a_n = \int_{[y_{N,n-1},y_{N,n})} \psi dF/p_n,\quad n=1,\dots,N.
\end{equation}
By Jensen's inequality we have
\begin{equation}\label{NP7}
a_n^p \leq \int_{[y_{N,n-1},y_{N,n})} \psi^p dF/p_n,\quad n=1,\dots,N,
\end{equation}
and hence
\begin{equation}\label{NP8}
\sum_{n=1}^N a_n^p p_n \leq \sum_{n=1}^N \int_{[y_{N,n-1},y_{N,n})} \psi^p dF =  E\left( \psi^p(Y) \right).
\end{equation}
Observe that $F(y_{N,i}-) \leq F(y) + 1/N$ holds for $y \in [y_{N,i-1},y_{N,i})$.
Consequently we have
\begin{eqnarray}\label{NP9}
\lefteqn{ \sum_{i=n}^N \frac{a_i p_i}{\sum_{j=1}^i p_j} = \sum_{i=n}^N \int_{[y_{N,i-1},y_{N,i})} \frac{\psi(y)}{F(y_{N,i}-)} dF(y) } \\
&& \geq  \sum_{i=n}^N \int_{[y_{N,i-1},y_{N,i})} \frac{\psi(y)}{F(y) + 1/N} dF(y) = \int_{[y_{N,n-1},\infty)} \frac{\psi(y)}{F(y) + 1/N} dF(y) \nonumber
\end{eqnarray}
and hence by Fatou's lemma
\begin{eqnarray}\label{NP10}
\lefteqn{\liminf_{N \to \infty} \sum_{n=1}^N \left( \sum_{i=n}^N \frac{a_i p_i}{\sum_{j=1}^i p_j} \right)^p p_n \nonumber} \\
&& \geq \liminf_{N \to \infty} \sum_{n=1}^N \int_{[y_{N,n-1},y_{N,n})} \left( \int_{[y_{N,n-1},\infty)} \frac{\psi(y)}{F(y) + 1/N} dF(y) \right)^p dF(x) \nonumber \\
&& \geq \liminf_{N \to \infty} \sum_{n=1}^N \int_{[y_{N,n-1},y_{N,n})} \left( \int_{[x,\infty)} \frac{\psi(y)}{F(y) + 1/N} dF(y) \right)^p dF(x)\\
&& = \liminf_{N \to \infty} \int_{\mathbb R} \left( \int_{[x,\infty)} \frac{\psi(y)}{F(y) + 1/N} dF(y) \right)^p dF(x) \nonumber \\
&& = \int_{\mathbb R} \left( \int_{[x,\infty)} \liminf_{N \to \infty} \frac{\psi(y)}{F(y) + 1/N} dF(y) \right)^p dF(x) \nonumber \\
&& = E\left( \left[ E \left( \frac{\psi(Y)}{F(Y)} {\bf 1}_{[Y \geq X]} \mid X \right) \right]^p \right). \nonumber
\end{eqnarray}
Combining (\ref{NP10}), Lemma \ref{C1} and (\ref{NP8}) we arrive at a proof of Theorem \ref{NCopsonIneq}.
\end{proof}
\begin{proof}{\em of Theorem \ref{equivalence}.} \\
Let $\eta$ be a nonnegative measurable function.
H\"older's inequality and subsequently Hardy's inequality (\ref{Hardy}) yield
\begin{eqnarray}\label{e1}
\lefteqn{  E\left( E \left( \frac{\psi(Y)}{F(Y)} {\bf 1}_{[Y \geq X]} \mid X \right) \eta(X) \right)
=  E\left( \psi(Y) \frac{E \left( \eta(X) {\bf 1}_{[X \leq Y]} \mid Y \right)}{F(Y)} \right) \nonumber } \\
&& \leq \left[E\left(\psi^p(Y) \right) \right]^{1/p}
\left[ E \left( \left[\frac{E \left( \eta(X) {\bf 1}_{[X \leq Y]} \mid Y \right)}{F(Y)} \right]^{p/(p-1)} \right) \right]^{(p-1)/p} \\
&& \leq \left[E\left(\psi^p(Y) \right) \right]^{1/p}
\left[\left( \frac{p/(p-1)}{p/(p-1) -1} \right)^{p/(p-1)} E \left(\eta^{p/(p-1)}(X) \right) \right]^{(p-1)/p} \nonumber \\
&& = p \left[E\left(\psi^p(Y) \right) \right]^{1/p} \left[ E \left(\eta^{p/(p-1)}(X) \right) \right]^{(p-1)/p}. \nonumber
\end{eqnarray}
Taking
\begin{equation}\label{e2}
\eta(X) = \left[ E \left( \frac{\psi(Y)}{F(Y)} {\bf 1}_{[Y \geq X]} \mid X \right) \right]^{p-1}
\end{equation}
we obtain Copson's inequality (\ref{NCopson}) from (\ref{e1}).
Similarly, H\"older's inequality and subsequently Copson's inequality (\ref{NCopson}) yield
\begin{eqnarray}\label{e3}
\lefteqn{  E\left( \frac{ E \left( \psi(Y) {\bf 1}_{[Y \leq X]} \mid X \right)}{F(X)} \eta(X) \right)
=  E\left( \psi(Y) E \left( \frac{\eta(X)}{F(X)} {\bf 1}_{[X \geq Y]} \mid Y \right) \right) \nonumber } \\
&& \leq \left[E\left(\psi^p(Y) \right) \right]^{1/p}
\left[ E \left( \left[ E \left( \frac{\eta(X)}{F(X)} {\bf 1}_{[X \geq Y]} \mid Y \right) \right]^{p/(p-1)} \right) \right]^{(p-1)/p} \\
&& \leq \left[E\left(\psi^p(Y) \right) \right]^{1/p}
\left[\left( \frac p{p-1} \right)^{p/(p-1)} E \left(\eta^{p/(p-1)}(X) \right) \right]^{(p-1)/p} \nonumber \\
&& = \frac p{p-1} \left[E\left(\psi^p(Y) \right) \right]^{1/p} \left[ E \left(\eta^{p/(p-1)}(X) \right) \right]^{(p-1)/p}. \nonumber
\end{eqnarray}
Taking
\begin{equation}\label{e4}
\eta(X) = \left[\frac{ E \left( \psi(Y) {\bf 1}_{[Y \leq X]} \mid X \right)}{F(X)} \right]^{p-1}
\end{equation}
we obtain Hardy's inequality (\ref{Hardy}) from (\ref{e3}).
\end{proof}

\subsection{Proof for Section \ref{sec:ReverseCopsonIneq}}

\begin{proof} of Theorem~\ref{NewReverseCopsonIneq}.
First we prove that for $p \in [1,\infty)$, for arbitrary $F$ and for $x \mapsto \psi(x)/F(x)$ nonincreasing
\begin{equation}\label{NRC7}
E\left( \left[ E \left( \frac{\psi(Y)}{F(Y)} {\bf 1}_{[Y \geq X]} \mid X \right) \right]^p \right)
\geq E\left( \psi^p(Y) \left[\frac {F(Y-)}{F(Y)} \right]^p \right)
\end{equation}
holds.
Observe that for continuous $F$ this implies (\ref{NRC0}).
To prove (\ref{NRC7}) we follow the line of argument in the proof of Theorem 4 of \cite{MR854569}.
For $x < y$ the monotonicity of $\psi/F$ implies
\begin{equation}\label{NRC8}
\int_{[x,y)} \frac \psi F dF \geq \frac {\psi(y)}{F(y)} [F(y-) - F(x-)]
\end{equation}
and hence
\begin{equation}\label{NRC9}
p \left[ \int_{[x,y)} \frac \psi F \, dF \right]^{p-1} \frac {\psi(y)}{F(y)}
\geq p \left[ \frac {\psi(y)}{F(y)} \right]^p [F(y-) - F(x-)]^{p-1}
\end{equation}
and
\begin{eqnarray}\label{NRC10}
\lefteqn{ \int_{\mathbb R} \int_{[x,\infty)} p \left[ \int_{[x,y)} \frac \psi F dF \right]^{p-1} \frac {\psi(y)}{F(y)}\, dF(y)\,dF(x) } \\
&& \geq \int_{\mathbb R} \int_{[x,\infty)} p \left[ \frac {\psi(y)}{F(y)} \right]^p [F(y-) - F(x-)]^{p-1} \, dF(y)\,dF(x). \nonumber
\end{eqnarray}
In view of $F(F^{-1}(u)-) \leq u$ and since $u \leq F(y-)$ implies $F^{-1}(u) \leq y$,
Fubini's theorem shows that the right hand side of (\ref{NRC10}) equals and satisfies
\begin{eqnarray}\label{NRC11}
\lefteqn{ \int_{\mathbb R} \left[ \frac {\psi(y)}{F(y)} \right]^p  \int_{(-\infty,y]} p[F(y-) - F(x-)]^{p-1} \, dF(x)\,dF(y) \nonumber } \\
&& = \int_{\mathbb R} \left[ \frac {\psi(y)}{F(y)} \right]^p \int_0^1 p[F(y-) - F(F^{-1}(u)-)]^{p-1} {\bf 1}_{[F^{-1}(u) \leq y]} \, du\,dF(y) \nonumber \\
&& \geq \int_{\mathbb R} \left[ \frac {\psi(y)}{F(y)} \right]^p \int_0^1 p[F(y-) - u]^{p-1} {\bf 1}_{[u \leq F(y-)]} \, du\,dF(y) \\
&&    = \int_{\mathbb R} \left[ \frac {\psi(y)}{F(y)} \right]^p [F(y-)]^p \, dF(y)
    = E\left( \psi^p(Y) \left[\frac {F(Y-)}{F(Y)} \right]^p \right). \nonumber
\end{eqnarray}
Furthermore, for fixed $x$ we define the distribution function \\
$G_x(y) = \int_{[x,y)} (\psi /F)  \, dF\, / \, \int_{[x,\infty)} (\psi /F) \, dF$ and we obtain
\begin{equation}\label{NRC12}
\int_{-\infty}^\infty p [G_x(y-)]^{p-1} dG_x(y) = \int_0^1 p[G_x(G_x^{-1}(u)-)]^{p-1} du \leq \int_0^1 pu^{p-1} du =1.
\end{equation}
This shows that the left hand side of (\ref{NRC10}) is bounded from above by
\begin{equation}\label{NRC13}
\int_{\mathbb R} \left[\int_{[x,\infty)} \psi /F \, dF \right]^p dF(x) = E\left( \left[ E \left( \frac{\psi(Y)}{F(Y)} {\bf 1}_{[Y \geq X]} \mid X \right) \right]^p \right).
\end{equation}
Combining this with (\ref{NRC10}) and (\ref{NRC11}) we arrive at (\ref{NRC7}) and hence at (\ref{NRC0}).

To prove (\ref{NRC1}) and (\ref{NRC2}) we restrict attention to integer $p$
and let $X,Y,Y_1, \dots, Y_p$ be independent random variables all with distribution function $F$.

If $F$ is continuous,
the monotonicity of $\psi$ implies that
\begin{eqnarray}\label{NRC3}
\lefteqn{ E\left( \left[ E \left( \frac{\psi(Y)}{F(Y)} {\bf 1}_{[Y \geq X]} \mid X \right) \right]^p \right)
    = E\left( \prod_{i=1}^p \left[ E \left( \frac{\psi(Y_i)}{F(Y_i)} {\bf 1}_{[X \leq Y_i]} \mid X \right) \right] \right) \nonumber }\\
&& = E\left( E \left(  \prod_{i=1}^p \frac{\psi(Y_i)}{F(Y_i)} {\bf 1}_{[X \leq Y_i]} \mid X \right) \right)
    = E\left( \prod_{i=1}^p \frac{\psi(Y_i)}{F(Y_i)} {\bf 1}_{[X \leq Y_i]} \right) \nonumber \\
&& = p!\, E\left( \left[\prod_{i=1}^p \frac{\psi(Y_i)}{F(Y_i)} \right] {\bf 1}_{[X \leq Y_1 \leq \cdots \leq Y_p]} \right)
    \geq p!\, E\left( \psi^p(Y_p)\frac{ {\bf 1}_{[X \leq Y_1 \leq \cdots \leq Y_p]} }{F(Y_1) \cdots F(Y_p)} \right) \nonumber \\
&& =  p!\, E\left( \psi^p(Y_p)\frac{ {\bf 1}_{[Y_1 \leq Y_2 \leq \cdots \leq Y_p]} }{F(Y_2) \cdots F(Y_p)} \right) = p!\, E\left( \psi^p(Y) \right),
\end{eqnarray}
where equality holds if $\psi$ is constant.

Similarly, if $F$ is arbitrary, we derive
\begin{eqnarray}\label{NRC4}
\lefteqn{ E\left( \left[ E \left( \frac{\psi(Y)}{F(Y)} {\bf 1}_{[Y \geq X]} \mid X \right) \right]^p \right)
    = E\left( \prod_{i=1}^p \frac{\psi(Y_i)}{F(Y_i)} {\bf 1}_{[X \leq Y_i]} \right) \nonumber } \\
&& \geq E\left( \left[\prod_{i=1}^p \frac{\psi(Y_i)}{F(Y_i)} \right] {\bf 1}_{[X \leq Y_1 \leq \cdots \leq Y_p]} \right)
    \geq E\left( \psi^p(Y_p)\frac{ {\bf 1}_{[X \leq Y_1 \leq \cdots \leq Y_p]} }{F(Y_1) \cdots F(Y_p)} \right) \nonumber \\
&& = E\left( \psi^p(Y_p)\frac{ {\bf 1}_{[Y_1 \leq Y_2 \leq \cdots \leq Y_p]} }{F(Y_2) \cdots F(Y_p)} \right) = E\left( \psi^p(Y) \right).
\end{eqnarray}
One may check that equalities in (\ref{NRC4}) hold if $F$ is degenerate.
\end{proof}

\subsection{Proofs for Section \ref{sec:CarlemanPolyaKnopp}}

\begin{proof} {\em of Theorem~\ref{CarlesonPolya-Knopp}.}
By Hardy's inequality in the probability form (\ref{Hardy}) with $\psi$ replaced by $\psi^{1/p}$  we have
\begin{eqnarray*}
E \left ( \left [ \frac{E ( \psi^{1/p} (Y) 1_{[Y \le X]}  | X )}{ F(X) } \right ]^p \right ) \le \left ( \frac{p}{p-1} \right )^p E (\psi (Y))
\end{eqnarray*}
where $(p/(p-1))^p \rightarrow  e$ as $p\rightarrow \infty$.
Furthermore, taking the logarithm of the expression inside the outer expectation we see
that it is equal to
\begin{eqnarray*}
p \log E \left ( \frac{\psi^{1/p} (Y) 1_{[Y\le X]}}{F(X)}\bigg | X \right )
= \frac{1}{\alpha}
\left\{ \log E \left ( \psi^{\alpha} (Y) 1_{[Y\le X]} \mid X \right) - \log F(X) \right\}
\end{eqnarray*}
after letting $p = 1/\alpha$.
Now for every fixed $X=X(\omega)$ we see that this difference quotient converges as $\alpha \downarrow 0$ by the chain rule as follows:
\begin{eqnarray*}
\lefteqn{ \lim_{\alpha \downarrow 0} \frac{1}{\alpha} \left\{ \log E \left( \psi^{\alpha} (Y) 1_{[Y\le X]} \mid X \right) - \log F(X) \right\}  \nonumber } \\
&& = \frac 1{F(X)} \lim_{\alpha \downarrow 0} E \left( \frac{\psi^{\alpha}(Y) -1}{\alpha} 1_{[Y\le X]} \mid X \right)
    = \frac{E ( 1_{[Y\le X]} \log \psi(Y) \mid X )}{F(X)},
\end{eqnarray*}
where the last equality holds by dominated convergence.
Indeed, for any $1> \varepsilon > \alpha >0$ we have
\begin{eqnarray*}
\left| \frac{\psi^{\alpha}(Y) -1}{\alpha} \right| = \left| \frac 1\alpha \int_0^\alpha \exp(z \log \psi(Y)) dz \log \psi(Y) \right| \leq \left( \psi^\varepsilon(Y) \vee 1 \right) \left|\log \psi(Y) \right|
\end{eqnarray*}
and the right hand side has finite expectation in view of $\psi \in L_1 (F)$ .
\end{proof}

\section{Summary}\label{sec:summary}
Our sharp inequalities related to Hardy's inequality read as follows.
\begin{equation}\label{summaryHardy}
E\left( \psi^p(Y) \right) \leq E\left( \left[ \frac{E\left( \psi(Y) {\bf 1}_{[Y \leq X]} \mid X \right)}{F(X)} \right]^p \right)
\leq \left(\frac p{p-1} \right)^p E\left( \psi^p(Y) \right),
\end{equation}
where the first inequality holds if $F$ is absolutely continuous and $\psi $ is  nonincreasing.

Our sharp inequalities related to Copson's inequality are the following.
\begin{equation}\label{summaryCopson}
\, E\left( \psi^p(Y) \right) \leq E\left( \left[ E \left( \frac{\psi(Y)}{F(Y)} {\bf 1}_{[Y \geq X]} \mid X \right) \right]^p \right)
\leq p^p E\left( \psi^p(Y) \right),
\end{equation}
where the first inequality holds if $F$ is continuous and $x \mapsto \psi(x)/F(x)$ is nonincreasing.\\
Our Hardy inequality with weights and mixed norms is
\begin{eqnarray}\label{summaryLiMao}
\lefteqn{ \left\{E\left(  \left[ E\left( \psi(Y) {\bf 1}_{[Y \leq X]} \mid X \right) \right]^{q} U(X) \right) \right\}^{1/q} \nonumber} \\
&& \leq \left( \frac{(q-p)/p}{{\rm Beta}(p/(q-p), (q-1)p/(q-p))} \right)^{(q-p)/pq} \\
&& \hspace{2em} \sup_{x \in {\mathbb R}} \left[\int_{[x,\infty)} U dF \right]^{1/q} \left[ \int_{(-\infty, x]} V^{-1/(p-1)} dG \right]^{(p-1)/p}
 \left\{E\left( \psi^p(Y) V(Y) \right) \right\}^{1/p}. \nonumber
\end{eqnarray}
Detailed conditions are given in the respective Theorems.

\section{Applications and Related Work}\label{ApplicRelWork}
We close with a few brief comments concerning applications and related work.

As noted by \cite{MR1897417}, 
Hardy's inequality (\ref{DiscHardyIneq}), and especially the weighted version thereof due to
\cite{MR0311856},  
has been applied by
 \cite{MR1710983}  
to obtain useful bounds for the spectral gap for birth-and-death Markov chains.
He provides a nice overview of alternative methods and their potential drawbacks.
 \cite{MR1682772}  
extend the methods of \cite{MR0311856} 
to study optimal constants in log-Sobolev inequalities on ${\mathbb R}$.
Because log-Sobolev inequalities are preserved by the formation of products of independent
distributions (i.e. tensorization), their results yield log-Sobolev inequalities for product measures.
Their results have been refined by \cite{MR2052235} who go on in \cite{MR2430612} to study modified
log-Sobolev inequalities.
 \cite{MR3961231}  
use the ``two-sided'' Hardy inequality given by (\ref{SW}) to give an alternative proof of Cheeger's inequality.
Applications of the Hardy inequality (\ref{Hardy}) with $F$ continuous to semiparametric models for survival analysis
were given by
 \cite{MR999013}  
and  \cite{MR1623559}. 
As noted in Sections~\ref{sec:Mr}, \ref{sec:Ci}, \ref{sec:CarlemanPolyaKnopp}, and \ref{sec:MartingalesAndHoperators},
these results yield martingale connections with the operators
$\overline{H}_F $ and $\overline{H}_F^*$.

There has been some related work on Hardy type inequalities with similar unification (of continuous and
discrete cases) as an explicit goal:
for example, see
 \cite{MR1920123} 
and \cite{MR2376257},   page 45.    
Li and Mao (2020), page 257 and 258, refer to Prohorov (2008). They all study general measures.

What about related work on formulating probabilistic versions of Hardy type inequalities? 
We have not found any results in this direction.
Despite the many applications of Hardy and Muckenhoupt type inequalities in probability
theory over the past 30 years,   we are unaware of any explicit mention of these inequalities
in terms of random variables.   It seems to us that these inequalities should be better known
in both the probability and statistics communities, and the probability versions may stimulate
both further applications
and further theoretical developments.
In any case, it seems to be  worthwhile
to understand when several different formulations can be unified.

In Section~\ref{sec:MartingalesAndHoperators}  
we sketched the connection between the operators $H_F^*$ and $\overline{H}_F^*$ appearing in our
probabilistic version of Copson's dual inequality and a simple counting process martingale.
The key functions  $\overline{\Lambda}_F (x)$ and $\Lambda_F(x)$ appearing in those operators
(recall (\ref{CumulativeHazardFunctions}) for the explicit definitions) play an extremely important role in survival analysis and reliability
theory.   Also note that they do not appear without the probabilistic perspective adopted in our approach.
In the Appendix (Section~\ref{sec:Appendix}) we discuss how these functions arise in connection with left and right
censored survival data.

\section{Appendix}
\label{sec:Appendix}  %
Right and Left  censored data:   the forward  and reverse hazard functions.

Here we go further with the discussion concerning the forward and backward hazard
functions connected with our random variable versions of the Copson inequalities.

\subsection{Censored survival data:  from the right and  from the left}
Suppose that  $X_1, \ldots , X_n$ are i.i.d. survival times with d.f. $F$ on $[0,\infty) $.
Furthermore, suppose that $Y_1, \ldots , Y_n $ are i.i.d. censoring times
(independent of $X_1, \ldots , X_n$)
with distribution function $G$.
Unfortunately we do not get to observe the $X_i$'s.  Instead, for each individual we observe
$$
(Z_i , \delta_i) \equiv ( X_i \wedge Y_i , \delta_i) \equiv (X_i \wedge Y_i , {\bf 1}_{[X_i \le Y_i]} ).
$$
Nevertheless, our goal is to estimate the cumulative hazard function
$$
\overline{\Lambda}_F (t) = \int_{[0,t]} (1-F(s-))^{-1} d F(s)
$$
and the survival function $1-F$ nonparametrically.
Actually, once we have an estimator $\widehat{\overline{\Lambda}}_{F,n}$ of $\overline{\Lambda}_F$,
then estimation of $1-F$ (and hence also $F$) is immediate since
$$
1-F(t) = \exp ( - \overline{\Lambda}_c (t)) \prod_{s\le t} (1 - \Delta \overline{\Lambda} (s) ),
$$
where $\Delta \overline{\Lambda}(s)  \equiv \overline{\Lambda} (s) - \overline{\Lambda} (s-)  $
and
$ \overline{\Lambda}_c (t) \equiv \Lambda (t) - \sum_{s \le t}  \Delta \Lambda (s) $.
This is the setting of (random, right) - censored survival data, and the (nonparametric) maximum
likelihood estimators of $\overline{\Lambda} $ and $1-F$ are the famous Nelson-Aalen
estimators $\widehat{\overline{\Lambda}}$ of $\Lambda$ and Kaplan-Meier estimator $1- \widehat{F}_n$  of $1-F$.
This is the random censorship version of right-censored survival data.
For treatments of fixed (i.e. deterministic) censoring times,
see  \cite{MR1089429} 
and \cite{MR0411040}.  

Before discussing right-censoring further, suppose instead that we observe
$$
(W_i , \gamma_i) \equiv ( U_i \vee V_i , {\bf 1}_{[U_i \ge V_i]} )
$$
where the $U_i$'s are i.i.d. with d.f. $F$, and the $V_i$'s are i.i.d. $G$
(and independent of the $U_i$'s).     The goal again is to estimate
the (reverse or backwards) cumulative hazard function
$\Lambda _F (t) \equiv \int_{[t,\infty)}  dF (s) / F(s) $ and the d.f.  $F$.
This is left-censored survival data.  Note that
$\Lambda_F $ is the function which arose naturally  in the random variable
version of Copson's inequality in~Section~\ref{sec:MartingalesAndHoperators}.
A famous example
of left-censored data is the data which arose in a study of  the
descent times of baboons in the Amboseli Reserve, Kenya.
See
\cite{Wagner-Altmann:73},   
\cite{Ware-DeMets:76},  
\cite{MR688619},    
\cite{MR851047}.   

In this study the $U_i$'s
represent the times when the baboons descended from the trees in the morning while
the $V_i$'s represent the times at which the investigators arrived at the study
site.  If a baboon descended before its observer arrived at the study site,
then that baboon's $U_i$ is regarded as being ``left - censored''.   Again the
goal is nonparametric estimation of the d.f. of the $U_i$'s.

In this setting, once we have an estimator $\widehat{\Lambda}_{F,n}$ of $\Lambda = \Lambda_F$,
then estimation of $F$   is immediate since
$$
F(t) = \exp \left ( - \Lambda_c (t) \right ) \prod_{s\ge t} \Delta \Lambda (s)
$$
where
$$
\Delta \Lambda(s)  \equiv \Lambda(s) - \Lambda(s-), \quad \Lambda_c (t) \equiv \Lambda (t) - \sum_{s \ge t} \Delta \Lambda (s) .
$$

\subsection{Nonparametric estimation for right or left censored survival data}

First the classical and frequently occurring  censoring from the right.
To see that $\overline{\Lambda}_F$ and $1-F$ can be estimated nonparametrically
from the observed data, consider the following empirical distributions:
\begin{eqnarray*}
\HH_n^{uc} (t)
         & = & \PP_n (\delta {\bf 1}_{[Z \le t]}) = n^{-1} \sum_{i=1}^n \delta_i {\bf 1}_{[Z_i \le t]} , \\
\HH_n^{c} (t)
         & =  &  \PP_n ((1-\delta) {\bf 1}_{[Z \le t]}) = n^{-1} \sum_{i=1}^n (1-\delta_i) {\bf 1}_{[Z_i \le t]}, \\
\HH_n (t) & = & \PP_n {\bf 1}_{[Z \le t]} = n^{-1} \sum_{i=1}^n {\bf 1}_{[Z_i \le t]}
\end{eqnarray*}
where ``$uc$'' stands for ``uncensored'' observations and ``$c$'' stands for ``censored'' observations.
By the strong law of large numbers,
\begin{eqnarray*}
\HH_n^{uc} (t) & \rightarrow_{a.s.} & E ( \delta {\bf 1}_{[Z \le t]} )  = \int_{[0,t]} (1- G(s-)) d F(s) = H^{uc} (t) , \\
\HH_n^{c} (t) & \rightarrow_{a.s.} & E ( (1-\delta) {\bf 1}_{[Z \le t]} )  = \int_{[0,t]} (1- F(s)) d G(s) = H^{c} (t) , \\
\HH_n (t) & \rightarrow_{a.s.} &  P( Z \le t ) = 1 - (1- F(t))(1-G(t)) = H(t) . \\
\end{eqnarray*}
Now note that
\begin{eqnarray*}
\overline{\Lambda}_F (t)
 = \int_{[0,t]} \frac{1}{1-F_{-}}dF = \int_{[0,t]} \frac{1-G_{-} }{(1-G_{-} )(1-F_{-} )} d F  
 =  \int_{[0,t]} \frac{1}{1- H(s-)} d H^{uc} (s) ,
\end{eqnarray*}
so we can estimate $\overline{\Lambda}_F$ by
\begin{eqnarray*}
\widehat{\overline{\Lambda}}_n (t) \equiv \int_{[0,t]} \frac{1}{ 1- \HH_n (s -)}  d \HH_n^{uc} (s) .
\end{eqnarray*}
Then
$1 - \widehat{F}_n (t) = \prod_{s \le t} (1 - \Delta \widehat{\overline{\Lambda}}_n (s) ) $
is the  \cite{MR93867}  
estimator of $1-F$.

Now for estimation in the presence of censoring from the left.
To see that $\overline{\Lambda}_F$ and $\Lambda_F$ can be estimated nonparametrically
from the observed (left-censored) data, consider the following empirical distributions:
\begin{eqnarray*}
\HH_n^{uc} (t) & = & \PP_n (\gamma {\bf 1}_{[W \le t]}) = n^{-1} \sum_{i=1}^n \gamma_i {\bf 1}_{[W_i \le t]} , \\
\HH_n^{c} (t)  & =  & \PP_n ((1-\gamma) {\bf 1}_{[W \le t]}) = n^{-1} \sum_{i=1}^n (1-\gamma_i) {\bf 1}_{[W_i \le t]}, \\
\HH_n (t) & = & \PP_n {\bf 1}_{[W  \le t ]} .
\end{eqnarray*}
Now
\begin{eqnarray*}
\HH_n^{uc} (t) & \rightarrow_{a.s.} & E ( \gamma {\bf 1}_{[W \le t]} )  = \int_{[0,t]} G(s) d F(s), \\
\HH_n^{c} (t) & \rightarrow_{a.s.} & E ( (1-\gamma) {\bf 1}_{[W \le t]} )  = \int_{[0,t]} F(s-) d G(s), \\
\HH_n (t) & \rightarrow_{a.s.} &  P( W \le t ) = F(t)G(t)  . \\
\end{eqnarray*}
Now note that
\begin{eqnarray*}
\Lambda_F (t)
 =  \int_{[s \ge t]} \frac{1}{F}  dF = \int_{[s\ge t]} \frac{G }{G F} d F
 =  \int_{[s \ge t]} \frac{1}{H(s)} d H^{uc} (s) ,
\end{eqnarray*}
so we can estimate the ``backwards''  Nelson-Aalen hazard function $\Lambda_F$ by
\begin{eqnarray*}
\widehat{ \Lambda}_n (t) \equiv \int_{[s\ge t]} \frac{1}{ \HH_n (s)}  d \HH_n^{uc} (s) .
\end{eqnarray*}
Then
$\widehat{F}_n (t) = \prod_{s \ge t} \Delta \widehat{\Lambda}_n (s)  $
 is the ``reverse'' or ``backwards'' Kaplan - Meier estimator of  $F$;
 see e.g. \cite{Ware-DeMets:76}  
 and \cite{MR688619},   
 \cite{MR851047}.  
For more on left-censoring,
the data in the baboon study, and a plot of the
resulting backwards Kaplan-Meier estimator,  see
 \cite{MR1198884}, pages 24, 162-165, and 273-274.
 \medskip


\par\noindent
{\bf Acknowledgement:}
The authors owe thanks to Peter Bickel for pointing out the
relevance of Hardy's inequality in the context of information bounds
for survival analysis models.
Thanks also go to Adrien Saumard for  several helpful comments.
Finally we thank the AE for suggesting that we should also include Muckenhoupt's inequality in the
current study.


\bibliographystyle{ims}
\bibliography{Klaassen-HardyIneq}

\end{document}